\newtheorem{conjecture}{Conjecture}
\crefname{conjecture}{conjecture}{conjectures}
\Crefname{conjecture}{Conjecture}{Conjectures}
\def\bo{\mathbf{0}}
\def\bb{\mathbf{b}}
\def\be{\mathbf{e}}
\def\bff{\mathbf{f}}
\def\bg{\mathbf{g}}
\def\bp{\mathbf{p}}
\def\bu{\mathbf{u}}
\def\bv{\mathbf{v}}
\def\bw{\mathbf{w}}
\def\bx{\mathbf{x}}
\def\by{\mathbf{y}}
\def\bz{\mathbf{z}}
\def\bA{\mathbf{A}}
\def\bB{\mathbf{B}}
\def\bC{\mathbf{C}}
\def\bD{\mathbf{D}}
\def\bF{\mathbf{F}}
\def\bG{\mathbf{G}}
\def\bH{\mathbf{H}}
\def\bM{\mathbf{M}}
\def\bN{\mathbf{N}}
\def\bP{\mathbf{P}}
\def\bQ{\mathbf{Q}}
\def\bR{\mathbf{R}}
\def\bT{\mathbf{T}}
\def\bU{\mathbf{U}}
\def\bW{\mathbf{W}}
\def\bX{\mathbf{X}}
\def\cF{\mathcal{F}}
\def\cI{\mathcal{I}}
\def\cL{\mathcal{L}}
\def\cO{\mathcal{O}}
\def\cQ{\mathcal{Q}}
\def\cS{\mathcal{S}}
\def\cX{\mathcal{X}}
\def\mE{\mathbb{E}}
\def\mR{\mathbb{R}}
\def\mS{\mathbb{S}}
\def\th{{\boldsymbol{\theta}}}
\newcommand{\seq}[1]{\{ #1 \}}
\newcommand{\grad}{\nabla}
\newcommand{\abs}[1]{\left|#1\right|}
\newcommand{\norm}[1]{\left\|#1\right\|}
\DeclareMathOperator{\Tr}{Tr}
\DeclareMathOperator*{\argmin}{argmin}
\DeclareMathOperator{\Rank}{rank}
\DeclareMathOperator{\st}{s.t.}
\DeclareMathOperator{\diag}{diag}
\newcommand{\up}{\rule{0pt}{1.5em}}
\newcommand{\Halmos}{\qed}
\def\fprod#1{\left\langle#1\right\rangle}
\newcommand\numberthis{\addtocounter{equation}{1}\tag{\theequation}}
\newcommand{\bmat}[1]{\begin{bmatrix}#1\end{bmatrix}}
\begin{document}

\title{\textbf{Nonasymptotic Analysis of Accelerated Methods With Inexact Oracle Under Absolute Error Bound}}

\author{Yin Liu\footnotemark[1] \and Sam Davanloo Tajbakhsh\footnotemark[2]}

\footnotetext[1]{Beijing International Center for Mathematical Research, Peking University, Beijing, China, \texttt{yinliu@pku.edu.cn} }
\footnotetext[2]{The Ohio State University, Columbus, OH, USA, \texttt{davanloo.1@osu.edu}}


\date{\today}


\maketitle

\begin{abstract}
Performance analysis of first-order algorithms with inexact oracles has gained recent attention due to various emerging applications in which obtaining exact gradients is impossible or computationally expensive.
Previous research has demonstrated that the performance of accelerated first-order methods is more sensitive to gradient errors compared with non-accelerated ones. This paper investigates the nonasymptotic convergence bound of two accelerated methods with inexact gradients to solve deterministic smooth convex problems. Performance Estimation Problem (PEP) is used as the primary tool to analyze the convergence bounds of the underlying algorithms.  By finding an analytical solution to PEP, we derive novel convergence bounds of the Generalized Optimized Gradient Method (GOGM) and Generalized Fast Gradient Method (GFGM) \emph{with inexact gradient oracles following the absolute error bound.}
Next, we analyze the tradeoff between the vanishing term and the accumulated error in the convergence bound that guides finding the optimal stepsize. Furthermore, we determine the optimal strategy to set the gradient inexactness along iterations, ensuring that the accumulated error remains subordinate to the vanishing term. Finally, we establish a lower bound for accumulated error for a subclass of first-order methods satisfying two properties, which then motivated us to develop PEP solutions satisfying this lower bound for GOGM and GFGM methods.
\end{abstract}

\section{Introduction}
We consider the optimization problem
\begin{equation}\label{eq:main}
	\min_{\bx\in \mR^d} f(\bx),
\end{equation}
where \(f\) is convex and has a Lipschitz continuous gradient.
We assume the optimal value of \eqref{eq:main} is finite and attained, i.e., \(f_* >-\infty\) with \(f_* := \min_\bx f(\bx)\) and the solution set is nonempty.

We use \(\cF_{\mu,L}\) with \(\mu \geq 0\) to denote the class of \(\mu\) strongly convex functions with Lipschitz continuous gradient with constant \(L\). This paper focuses on the class of merely convex functions with Lipschitz continuous gradient \(f \in \cF_{0,L}\).
For this class of functions, the gradient descent (GD) method has the iteration complexity \(f(\bx_K) - f_* = \cO(K^{-1})\). This result can be improved to \(\cO(K^{-2})\) with Nesterov's fast gradient method (FGM) \cite{Nesterov1983MethodUnconstrainedConvex, Nesterov1988ApproachConstructionOptimal, Nesterov2005SmoothMinimizationNon, Nesterov2018LecturesConvexOptimization}. Recently, using computer-aided analysis, the Optimized Gradient Method (OGM) has been proposed \cite{Drori2014PerformanceFirstOrder, Kim2016OptimizedFirstOrder} that improves the complexity of FGM by a \(\sqrt{2} \) constant.

The above results are based on having access to the \emph{exact} gradient at any point, which is not the case in many applications. The performance of the algorithms deteriorates when there is an error in the gradient estimate. We consider the gradient error as
\begin{equation}\label{eq:inexact-oracle}
	\tilde{\grad}f(\bx) :=  \grad f(\bx) + \be, \qquad \norm{\be}\leq b = h(\boldsymbol{\eta}),
\end{equation}
where the error \(\be\) can be controlled by a parameter \(\boldsymbol{\eta}\in\mR^r\) through a positive function \(h(\cdot)\) and the bound holds either deterministically, which is the focus of this work, or with high probability in stochastic settings.

For example, the parameter \(\boldsymbol{\eta}\) can represent computational effort, the accuracy of an approximation, or the sampling size used to achieve a desired gradient accuracy.

\subsection{Applications with inexact gradient oracles.} \label{sec:applications}
In some applications, gradient inexactness is the result of the oracle not being evaluated at the desired point. For instance, in bilevel optimization, one block of coordinates of the upper-level problem is the solution of the lower-level optimization problem. However, in practice, the lower-level problem can only be solved to a suboptimal solution, which results in upper-level gradient inexactness. Similarly, in composition optimization, the desired point results from an expectation that in many scenarios can only be estimated, resulting in gradient inexactness. These two applications are discussed below.
\paragraph{Bilevel optimization.}
	Consider the bilevel optimization problem
	\begin{align*}
		\min_\bx & \quad f(\bx;\by^*(\bx))                    \\
		\st      & \quad \by^*(\bx) = \argmin_\by g(\bx,\by),
	\end{align*}
	with \(f\) being continuously differentiable and \(g\) being two times continuously differentiable and strongly convex. To solve the problem with a gradient-based method, the gradient of the upper-level problem with respect to \(\bx\) is
	\begin{equation*}
		\grad f(\bx;\by^*(\bx)) = \grad_\bx f(\bx;\by^*(\bx)) - \grad^2_{\bx\by} g(\bx,\by^*(\bx))[\grad^2_{\by\by} g(\bx,\by^*(\bx))]^{-1} \grad_\by f(\bx;\by^*(\bx)),
	\end{equation*}
	which requires solving the lower-level problem to optimality. When the lower-level problem is solved to a suboptimal point \(\tilde{\by}(\bx)\) and the gradient is evaluated at this point instead of \(\by^*(\bx)\), the resulting gradient is inexact.
Let \(\tilde{\by}_k(\bx)\) be the solution of the lower-level problem by the gradient descent method with stepsize \(\frac{2}{\mu_g + L_g}\) after \(k\) iterations. From the iteration complexity of GD for the function class \(\cF_{\mu,L}\), the gradient inexactness can be bounded as
	\begin{equation*}
		\norm{\tilde{\grad}f(\bx;\tilde{\by}_k(\bx)) - \grad f(\bx;\by^*(\bx))} \leq C \left(\frac{Q_g - 1}{Q_g + 1}\right)^k \norm{\by_0 - \by^*(\bx)},
	\end{equation*}
	where \(C\) is a constant and \(Q_g := L_g/\mu_g\). The above bound follows an exponential decay and represents the error bound in \eqref{eq:inexact-oracle} with \(\boldsymbol{\eta}\) being the iteration number \(k\) - For more details see~\ref{sec:detail_example}.

\paragraph{Composition optimization.}
	Consider the nested composition optimization problem
	\begin{equation*}
		\min_{\bx} \ f(\bx) := h(\bg(\bx)) \quad \text{with} \quad \bg(\bx) := \frac{1}{N}\sum_{i=1}^N \bg_i(\bx),
	\end{equation*}
    where \(h:\mR^k\rightarrow\mR\) and \(\bg:\mR^d\rightarrow\mR^k\) are continuously differentiable, and \(N\) is a large positive integer. From the chain rule, the exact gradient is \(\grad f(\bx) = \grad \bg(\bx) \grad h(\bg(\bx))\). However, assume one can only approximate \(\bg\) and \(\grad \bg\) through their minibatch samples \(\cS_{\bg}\subseteq [N]\) and \(\cS_{\grad}\subseteq[N]\), respectively, where \([N]:=\seq{1,2,\cdots,N}\). Then, under some Lipschitz continuity and bounded variance assumptions, as shown in \cref{lem:composition-error-bound}, with probability at least \(1 - \frac{1}{\epsilon}\), the inexact gradient estimate is bounded as
\begin{equation*}
		\norm{\tilde{\grad} \bg(\bx) \grad h (\tilde{\bg}(\bx)) - \grad \bg(\bx) \grad h(\bg(\bx))}^2 \leq \epsilon \left(\frac{C_1}{\abs{\cS_\grad}} + \frac{C_2}{\abs{\cS_\bg}} \right).
\end{equation*}
This bound represents the error bound in \eqref{eq:inexact-oracle} with \(\boldsymbol{\eta}=\left(\abs{\cS_\grad},\abs{\cS_\bg}\right)\). Furthermore, if \(\cS_\grad = \cS_\bg\), for any fixed \(\epsilon\), the bound follows a power law decay.

Another scenario is when \(\grad f\) is unattainable, but it can be approximated by inexact function values (zero-order information), denoted by \(\tilde{f}\). In many applications, it is often reasonable to assume \(\abs{\tilde{f}(\bx) - f(\bx)} \leq b_f\).
 Below, we discuss deterministic and stochastic zero-order methods that generate inexact gradient estimates with quantifiable error bounds~\cite{Berahas2022TheoreticalEmpiricalComparison}.

\paragraph{Gradient estimation via forward finite difference.}
	Let \(\bu_i \in \mR^d\) denote the unit vector with the \(i\)-th element equal to 1, and \(l>0\) be the finite difference interval. Define the \(i\)-th element of the gradient estimator as
	\begin{equation*}
		[\tilde{\grad}f(\bx)]_i = \frac{\tilde{f}(\bx + l \bu_i) - \tilde{f}(\bx)}{l}.
	\end{equation*}
When \(f(\bx)\) has Lipschitz continuous gradient, the error of the gradient estimate is shown by \cite{Berahas2022TheoreticalEmpiricalComparison} to be bounded as
	\begin{equation*}
		\norm{\tilde{\grad}f(\bx) - \grad f(\bx)} \leq \frac{\sqrt{d}L l}{2} + \frac{2\sqrt{d}b_f}{l},
	\end{equation*}
	which represents the error bound \eqref{eq:inexact-oracle} with \(\boldsymbol{\eta}=(l,b_f)\). Note that the forward finite difference requires querying the zero-order oracle \(d+1\) times to approximate the gradient at \(\bx\).

\paragraph{Gradient estimation via Gaussian smoothing.}
	Let \(\seq{\bv_i}_{i=1}^N\) be i.i.d. random directions following the standard multivariate normal distribution \(\bv_i\sim\mathcal N(0,I_d)\), and define
	\begin{equation*}
		\tilde{\grad}f(\bx) = \frac{1}{N}\sum_{i=1}^N \frac{\tilde{f}(\bx + l \bv_i) - \tilde{f}(\bx)}{l}\bv_i.
	\end{equation*}
	It is shown in \cite{Berahas2022TheoreticalEmpiricalComparison} that
	\[
        \norm{\mE_{\seq{\bv_i}_{i=1}^N}[\tilde{\grad}f(\bx)] - \grad f(\bx) } \leq \sqrt{d}Ll + \frac{\sqrt{d}b_f}{l},
	\]
 which represents the error bound \eqref{eq:inexact-oracle} with \(\boldsymbol{\eta}=(l,b_f)\).

\subsection{Effect of oracle inexactness on acceleration}
A careful review of the literature shows that the effect of oracle inexactness on acceleration is typically addressed through 1) development of a specific error condition, and 2) analysis of a common (or customized) accelerated method under that error condition with (or in a few cases without) extra assumption. A detailed literature review on accelerated methods under different gradient error conditions is relegated to Section~\ref{sec:related-work}.

This paper considers gradient inexactness in the form of the absolute error defined as~\cite{Polyak1987IntroductionOptimization}
\begin{align}
	\text{(absolute error)}  \qquad  \norm{\tilde{\grad} f(\bx) - \grad f(\bx) } \leq b. \tag{AE}  \label{eq:ae}
\end{align}
Motivated by the application discussed in Section~\ref{sec:applications}, \eqref{eq:ae} can be considered as one of the most common forms to quantify the gradient error. Furthermore, as discussed in Section~\ref{sec:related-work}, obtaining other error conditions from \eqref{eq:ae}, e.g., BIE, IFO, etc., requires extra assumptions that limit their applications.

Therefore, instead of translating \eqref{eq:ae} into a more restrictive error model, we directly study how this oracle interacts with accelerated first-order schemes.
Nesterov's Fast Gradient Method (FGM) and its variations have been extensively studied under various inexact gradient assumptions, which we refer to as iFGM. The standard FGM update generates sequences \(\seq{\by_k}\) and \(\seq{\bz_k}\), with the gradient evaluated at a convex combination of these points. The detailed update rule is
\begin{equation*}
\begin{cases}
    \by_{k+1} = \bx_k - \frac{1}{L}\tilde{\grad}f(\bx_k), \\
    \bz_{k+1} = \bz_k - \frac{1}{L}\alpha_k \tilde{\grad}f(\bx_k), \\
    \bx_{k+1} = (1- \frac{1}{\alpha_{k+1}})\by_{k+1} + \frac{1}{\alpha_{k+1}} \bz_{k+1},
\end{cases}
\end{equation*}
where \(\alpha_0=1\), \(\alpha_{k+1} = \frac{1+\sqrt{1+4\alpha_k^2}}{2}\), and \(\tilde{\grad} f(\cdot)\) denotes the inexact gradient. When the exact gradient is available, FGM achieves the optimal convergence rate of \(\cO(K^{-2})\). However, when the gradient is inexact, this fast convergence also accelerates error accumulation, undermining the method's performance.
This observation motivates the use of generalized accelerated schemes, where the acceleration steps can be tuned to balance exact-oracle convergence against error accumulation.

To address this trade-off, a generalized version of iFGM (iGFGM, \cref{alg:iGFGM}) is often considered. This approach controls the increase of \(\alpha_k\) to obtain a better tradeoff between the convergence rate and the accumulated error. For instance, under a specific assumption of the gradient inexactness (see details in \cref{sec:related-work}), \citet{Devolder2013FirstOrderMethods} established the convergence result of iGFGM as
\begin{equation*}
    \cO\left(\frac{1}{A_K} + \frac{\sum_{i=0}^K A_i \delta_{\bx_i}}{A_K} \right),
\end{equation*}
where \(\delta_{x_i}\) represents the gradient error at \(x_i\) and \(A_i=\sum_{j=1}^i \alpha_j\) with \(\alpha_i^2\leq A_i\).

While iGFGM has been widely investigated under error conditions \emph{other than} \eqref{eq:ae} (except for one work~\cite{ArtemVasin2023AcceleratedGradientMethods} which is discussed in Section~\ref{sec:related-work}), the Optimized Gradient Method (OGM) with an inexact gradient, i.e., iOGM, has received less attention.
A similar tradeoff between convergence rate and accumulated error exists for iOGM. In this paper, we consider the generalized version \cite{Kim2018GeneralizingOptimizedGradient}, which we term iGOGM (\cref{alg:iGOGM}).

We note that the difference between the iGFGM and iGOGM methods is in their step 3, where iGOGM's stepsize is two times larger. When \(\lambda_k = 1\), the \(\bx_{k+1}\) updates (step 5) in both iGFGM and iGOGM are simplified as
\(\bx_{k+1} = \left(1 - \frac{1}{\alpha_{k+1}}\right) \by_{k+1} + \frac{1}{\alpha_{k+1}} \bz_{k+1},\)
and algorithms reduce to iFGM and iOGM, respectively.

The properties of OGM and its generalization have been thoroughly studied. However, the convergence analysis of OGM (or its generalization) with an inexact gradient oracle has not been performed yet. Furthermore, the existing analysis of iGFGM with the absolute error assumption depends on the trajectory of sequences and requires fixed error throughout the iterations, as discussed in ~\cite{ArtemVasin2023AcceleratedGradientMethods} and summarized in Section~\ref{sec:related-work}. This paper aims to close these gaps.
The two generalized algorithms are presented side by side below to make this structural difference explicit.

\begin{minipage}[t]{0.48\textwidth}
    \begin{algorithm}[H]
        \caption{Inexact Generalized Fast Gradient Method (iGFGM)} \label{alg:iGFGM}
        \begin{algorithmic}[1]
            \Require \(\bz_0 = \bx_0 \in \mR^d \), \(A_0 = \alpha_0 =1 \), stepsize parameter \(\seq{\lambda_k}\), \(\lambda\in[0,1]\).
            \For{\(k=0, \cdots, K-1\)}
            \State \(\by_{k+1} = \bx_{k} - \frac{1}{L}\tilde{\grad} f(\bx_k)\)
            \State \(\bz_{k+1} = \bz_k  - \frac{1}{L} \alpha_k \tilde{\grad} f(\bx_k)\)
            \State \(\begin{aligned}[t]
                \alpha_{k+1} & =\frac{\lambda_{k+1}+\sqrt{4\lambda_{k+1}A_k+\lambda_{k+1}^2}}{2}, \\
                A_{k+1}      & =A_k+\alpha_{k+1}
            \end{aligned}\)
            \State \(\bx_{k+1} =(1- \frac{\alpha_{k+1}}{A_{k+1}})\by_{k+1} + \frac{\alpha_{k+1}}{A_{k+1}} \bz_{k+1}\)
            \EndFor
        \end{algorithmic}
    \end{algorithm}
\end{minipage}%
\hfill
\begin{minipage}[t]{0.48\textwidth}
    \begin{algorithm}[H]
        \caption{Inexact Generalized Optimized Gradient Method (iGOGM)} \label{alg:iGOGM}
        \begin{algorithmic}[1]
            \Require \(\bz_0 = \bx_0 \in \mR^d \), \(A_0 = \alpha_0 =1\), stepsize parameter \(\seq{\lambda_k}\), \(\lambda\in[0,1]\).
            \For{\(k=0, \cdots, K-1\)}
            \State \(\by_{k+1} = \bx_{k} - \frac{1}{L}\tilde{\grad} f(\bx_k)\)
            \State \(\bz_{k+1} = \bz_k  - \frac{2}{L} \alpha_k \tilde{\grad} f(\bx_k)\)
            \State \(\begin{aligned}[t]
                \alpha_{k+1} & =\frac{\lambda_{k+1}+\sqrt{4\lambda_{k+1}A_k+\lambda_{k+1}^2}}{2}, \\
                A_{k+1}      & =A_k+\alpha_{k+1}
            \end{aligned}\)
            \State \(\bx_{k+1} =(1- \frac{\alpha_{k+1}}{A_{k+1}})\by_{k+1} + \frac{\alpha_{k+1}}{A_{k+1}} \bz_{k+1}\)
            \EndFor
        \end{algorithmic}
    \end{algorithm}
\end{minipage}
\vspace{0.3cm}

\subsection{Contributions}
In this paper, we analyze nonasymptotic convergence bounds of two accelerated gradient methods, namely Generalized Fast Gradient Method (iGFGM,~\cref{alg:iGFGM}) and Generalized Optimized Gradient Method (iGOGM,~\cref{alg:iGOGM}), under \emph{inexact} gradient oracles satisfying the \emph{absolute error condition} \eqref{eq:ae}. The \eqref{eq:ae} error bound finds a range of applications, a few of which are discussed in Section~\ref{sec:applications}. These bounds are established through the Performance Estimation Problem (PEP) technique and, unlike previous works, are independent of unknown quantities--see the first bullet below for more details. Furthermore, our analysis allows variable error along the iterations of the algorithms; hence, the established bounds allow exploiting the tradeoff between the per-iteration cost to control the bias and the total cost to obtain the optimal oracle inexactness schedule. Our key contributions can be summarized as follows:

\begin{itemize}
\item As summarized in \cref{tab:algorithm-comparasion}, existing methods are limited by their reliance on strong assumptions (e.g., BIE, IFO, IFO-q, and RE) which are generally not easily verifiable in different applications. Furthermore, some previous analyses depend on unquantifiable terms, e.g., the dependence of the bound in \cite{ArtemVasin2023AcceleratedGradientMethods} on unquantifiable \(\tilde{R}_K\), the radius of the set containing the algorithm's trajectory. Hence, it is not possible to evaluate the accumulated error in \cite{ArtemVasin2023AcceleratedGradientMethods} and to determine the algorithm's optimal parameter settings. Our work closes this gap by deriving a quantifiable expression for the accumulated error under the \eqref{eq:ae} condition.
\item We provide the convergence bound of iGOGM and iGFGM under the absolute error \eqref{eq:ae} condition in Section~\ref{sec:main_thms}. The derived convergence bounds consist of two components: the diminishing component independent of the oracle's error and the component containing the accumulated error. Interestingly, the accumulated error is independent of the initial condition \(\norm{\bx_0 - \bx_*}\), and is determined solely by the Lipschitz constant and the stepsize. This result advances the convergence bound of \cite{ArtemVasin2023AcceleratedGradientMethods}, presented in \eqref{eq:Vasin-result}, in the sense that it eliminates the dependence on the unknown parameter \(\tilde{R}_k\). Furthermore, unlike \citet{Nabou2024ProximalGradientMethods}'s inexactness condition for the gradient to be a subgradient, we do not require such a condition, and we yet achieve comparable convergence guarantees.
\item In \cref{sec:rate-error-tradeoff}, we analyze the established convergence bounds from two different perspectives: i) we investigate the tradeoff between the vanishing term and the accumulated error; ii) we establish the optimal inexactness schedule, i.e., the minimal total cost to control the error, while preserving the accelerated convergence bound. The proofs of our main convergence results for iGOGM and iGFGM are presented in Sections~\ref{sec:iGOGM} and \eqref{appx:iGFGM}, respectively.
\item We derive an \emph{analytical feasible solution} to the dual of the relaxed semidefinite programming formulation of the Performance Estimation Problem (PEP) for first-order algorithms with inexact \eqref{eq:ae} oracles in \cref{sec:iGOGM}. The approach used to find this solution inspires our theoretical proof of the convergence bound. Notably, this proof can be understood without prior knowledge of the PEP technique and could be of independent interest.

\end{itemize}

The codes for our numerical experiments as well as those to verify the algebraic steps in our proofs are available at \href{https://github.com/Yin-LIU/Inexact-Acceleration-PEP}{https://github.com/Yin-LIU/Inexact-Acceleration-PEP}.

\paragraph{Notations}
Vectors and matrices are denoted by bold and capitalized bold letters, respectively. Sets are denoted by calligraphic letters. In the absence of additional instruction, a bold letter coupled with an arrow, e.g., \(\vec{\bu}\), generally denotes a standard basis vector, which has exactly one element equal to 1 while other elements are 0. \(\bM_{i,j}\) stands for the \((i,j)-\)th element of the matrix \(\bM\),  with row and column indices starting from \(1\). The notation \(\bM^{\backslash [i] }\) refers to the submatrix of \(\bM\) obtained by removing its \(i\)-th column and row. \([\bM]_{n\times m}\) indicates the shape of the matrix and \([\bM]\) is used to emphasize the item inside the brackets is a matrix. \(\norm{\cdot}\) denotes the \(l_2\) norm for vectors and Frobenius norm for matrices.
\(\log(\cdot)\) represents the natural logarithm.

\subsection{Related work}\label{sec:related-work}
Below, we discuss the literature related to accelerated methods under various oracle inexactness conditions.

\textbullet \ \textbf{Bounded Inner Product Error (BIE) }~\citet{dAspremont2008SmoothOptimizationApproximate} focuses on smooth convex optimization over a \textit{compact} convex set \(\cQ\), with the inexact gradient satisfying
\begin{align}
    \abs{\fprod{\tilde{\grad} f(\bx) - \grad f(\bx), \by - \bz}} \leq \delta,  \quad \forall \bx, \by,\bz \in \cQ. \tag{BIE} \label{eq:bie}
\end{align}
The BIE condition can be derived from the absolute error assumption when the feasible set is bounded. Specifically, if the gradient error satisfies \(\norm{\tilde{\grad} f(\bx) - \grad f(\bx)} \leq b\) for some constant \(b\), then
\begin{equation*}
	\abs{\fprod{\tilde{\grad} f(\bx) - \grad f(\bx), \by - \bz}} \leq b \norm{\by - \bz} \leq b\max_{\by,\bz \in \cQ}\norm{\by-\bz} = b \cdot D := \delta,
\end{equation*}
where \(D = \max_{\by,\bz \in \cQ} \norm{\by- \bz}\) represents the diameter of the feasible set. This derivation highlights that the error bound \(\delta\) is directly proportional to the size of the feasible region.

Under the BIE assumption, iFGM has the convergence rate of
\begin{equation*}
    f(\by_{K+1}) - f_* \leq \frac{L\norm{\bx_0 - \bx_*}^2}{A_K} + 3\delta .
\end{equation*}
Note that the convergence rate is determined by two components: the first term decreases with the accumulation parameter \(A_K\), and the second term is a constant that scales with the inner product error bound \(\delta\).

\textbullet \ \textbf{Inexact First-order \((\delta,L)\) Oracle (IFO)} ~\citet{Devolder2013FirstOrderMethods}
introduce the \((\delta, L)\) first-order oracle condition for problems with unbounded feasible set or those with nonsmooth objective functions. The inexactness of the oracle pair \((\tilde{f}_{\delta_\bx}(\bx),\tilde{\grad} f_{\delta_\bx}(\bx))\) is quantified by
\begin{align*}
    0 \leq f(\by) - (\tilde{f}_{\delta_\bx}(\bx) + \fprod{ \tilde{\grad} f_{\delta_\bx}(\bx),\by-\bx} \leq \frac{L}{2} \norm{\bx - \by}^2 + \delta_\bx, \tag{IFO} \label{eq:ifo}
\end{align*}
which is a relaxation of the first-order convexity and Lipschitz smoothness conditions.

Under the IFO assumption, \citet{Devolder2013FirstOrderMethods} shows that the sequence generated by iFGM satisfies
\begin{equation*}
    f(\by_{K+1}) - f_* \leq \frac{L \norm{\bx_0 - \bx_*}^2}{A_K} + \frac{\sum_{i=0}^K A_i \delta_{\bx_i}}{A_K}.
\end{equation*}
This result is similar to that of \eqref{eq:bie} condition, i.e., a term decreasing with iteration and an accumulated error term. However, \eqref{eq:ifo} condition also allows varying inexactness levels along the iterations. While \eqref{eq:ifo} condition allows an unbounded feasible set, obtaining it from the absolute error condition requires bounded feasible sets. Specifically, for an oracle with absolute errors \(\abs{\tilde{f}(\bx) - f(\bx)}\leq \Delta_1\) and \(\norm{\tilde{\grad} f(\bx) - \grad f(\bx)} \leq \Delta_2\). By defining \(\tilde{f}_{\delta_\bx} := \tilde{f}(\bx) - \Delta_1 - \Delta_2 D \) and \(\tilde{\grad} f_{\delta_\bx}(\bx) := \tilde{\grad} f(\bx) \), where \(\delta_\bx = 2\Delta_1 + 2\Delta_2 D\) (with \(D\) representing the feasible region's diameter), it can be shown the oracle with \eqref{eq:ae} condition is a \((\delta,L)-\)oracle~\cite{Devolder2013FirstOrderMethods}. The \eqref{eq:ifo} condition motivated \cite{zoll2026inexactly} to consider a family of ``inexactly smooth" functions, present the corresponding interpolation theorems, and design minimax optimal algorithms for important special cases.

\textbullet \ \textbf{Inexact First-order \((\delta,L)\) Oracle of Degree \(q\) (IFO-q)}
To be able to address oracle inexactness in minimization of smooth nonconvex or nonsmooth convex objectives, while sacrificing the inexact gradients to be subgradients, \citet{Nabou2024ProximalGradientMethods} propose the IFO-q oracle with an additional degree parameter \(q \in [0, 2)\) as
\begin{align*}
    0 \leq f(\by) - (f(\bx) + \fprod{\tilde{\grad}f_{\delta}(\bx),\by-\bx} \leq \frac{L}{2}\norm{\by-\bx}^2 + \delta \norm{\bx-\by}^q. \tag{IFO-q} \label{eq:ifoq}
\end{align*}
With an appropriate selection of the parameters \(q\) and \(\delta\), any inexact gradient satisfying the absolute error condition also satisfies the upper bound IFO-q condition even on unbounded feasible sets. However, the lower bound of this inequality requires the inexact gradient to be a subgradient of the objective function. This subgradient condition is not inherently guaranteed by the absolute error assumption, limiting the applicability of the IFO-q oracle when solely relying on absolute gradient errors. The convergence rate of iFGM {with an oracle satisfying IFO--q} is given as
\begin{equation}\label{eq:Nabou-result}
    f(\by_{K+1}) - f_* \leq \frac{4L\norm{\bx_0 - \bx_*}^2}{(K+1)(K+2)} + \frac{8^{\frac{q}{2}}\norm{\bx_0 - \bx_*}^q(K+3)}{((K+1)(K+2)(K+3))^{\frac{q}{2}}} \delta,
\end{equation}
where both terms in the upper bound depend on the initial condition.

\textbullet \ \textbf{Absolute Error (AE)} The absolute error condition~\eqref{eq:ae},
has recently been investigated in a couple of works. Notably, \citet{ArtemVasin2023AcceleratedGradientMethods} study a variant of the accelerated algorithm known as the Similar Triangles Method (STM) \citep{Gasnikov2018UniversalMethodStochastic} with inexact gradients, i.e., (iSTM). In contrast to FGM, STM requires only one projection per iteration, making it preferable for constrained problems. For unconstrained optimization, it is shown by \citet{ArtemVasin2023AcceleratedGradientMethods} that iSTM achieves the convergence rate of
\begin{equation}\label{eq:Vasin-result}
    f(\by_K) - f_* \leq \frac{8L\norm{\bx_0 - \bx_*}^2}{K^2} + 3\tilde{R}_K \delta +\frac{K \delta^2}{2L},
\end{equation}
where \(\tilde{R}_K := \max_{0\leq j\leq K} \big\{\norm{\bz_j - \bx_*}, \norm{\bx_j - \bx_*}, \norm{\by_j - \bx_*}\big\}\).
However, this convergence bound is only given under fixed inexactness level {along the iterations} and restricts the stepsize as \(\alpha_k^2 = A_k\). Furthermore, \(\tilde{R}_K\) is not explicitly quantifiable under the absolute error assumption, since it is only shown to be bounded by \(\|\bx_0 - \bx_*\|\) \emph{under the exact gradient oracle}. When the feasible region is bounded with diameter \(D\), one can set \(\tilde{R}_K = D\), thereby recovering the rate similar to previous cases. This coincides with the observation that both the BIE and IFO-(\(\delta,L\)) conditions can be derived from \eqref{eq:ae} assumption under a bounded feasible region.

\textbullet \ \textbf{Relative Error (RE)~\cite{Polyak1987IntroductionOptimization}} The relative error \eqref{eq:re} is a stronger assumption as it requires the error to decrease with the gradient norm and enforces the gradient oracle to be more accurate near the stationary points in smooth unconstrained optimization.
\begin{align}
	\text{(relative error)} \qquad \   \norm{\tilde{\grad} f(\bx) - \grad f(\bx) } \leq \delta \norm{\grad f(\bx)}, \quad \text{for some } \delta\in[0,1]. \tag{RE} \label{eq:re}
\end{align}
Under this condition, \citet{Kornilov2023IntermediateGradientMethods} establish the boundedness of \(\tilde{R}_K\) in \eqref{eq:Vasin-result} for the iSTM algorithm.  Specifically, they show that with appropriate step size selection, \(\tilde{R}_K\) can be bounded by \(2 \|\bx_0 - \bx_*\|\), eliminating the need for separate error terms in the convergence rate. The resulting convergence rate is
\begin{equation*}
    f(\by_K) - f_* \leq \frac{16 a L \norm{\bx_0 -\bx_*}^2}{(K+2)^p},
\end{equation*}
where \(p \in [1, 2]\) and \(a = \cO\left(\max \left\{1, K^{\frac{p}{4}} \sqrt{\delta}, K^{\frac{p}{2}} \delta, K^p \delta^2\right\}\right)\) is a parameter that depends on the relative error level \(\delta\) and the iteration count \(K\).

The inexact assumptions discussed above and their corresponding convergence rates are summarized in \cref{tab:algorithm-comparasion}.

\begin{table}[hbt]
\centering
\caption{Summary of gradient inexactness error conditions and the corresponding complexity bounds for accelerated methods. (BFS): Bounded Feasible Set; (BG): Bounded Gradient; (SG): inexact gradient being a SubGradient.
The second column indicates the relation (necessity or sufficiency) of the absolute error (AE) condition for other error models in the presence of additional conditions. The \(\hat{u}_k\) in ``this work" are functions of the stepsize sequence \(\seq{\alpha_i}_{i=1}^K\).}
\label{tab:algorithm-comparasion}
\begin{tabular}{ccccc}
    \hline \up
    \multirow{2}{*}{Error condition}                              & \multirow{2}{*} {Relation to AE} & Allows changing error & \multirow{2}{*}{Iteration complexity}                                                                          \\
                                                                  &                                  & along iterations?     &                                                                                                                \\
    \hline
    \eqref{eq:bie}
    \cite{dAspremont2008SmoothOptimizationApproximate}            & AE+BFS\(\Rightarrow\)BIE           & \(\times\)              & \(\cO\left(\frac{1}{A_K} + \delta\right)\)                                                                       \\
    \eqref{eq:ifo} \cite{Devolder2013FirstOrderMethods}           & AE+BFS\(\Rightarrow\)IFO           & \checkmark            & \(\cO\left(\frac{1}{A_K} + \frac{\sum_{i=0}^K A_i \delta_{\bx_i}}{A_K} \right)\)                                 \\
    \eqref{eq:ifoq}~\cite{Nabou2024ProximalGradientMethods}       & AE+SG\(\Rightarrow\)IFO-q          & {\(\times\)}            & \(\cO\left(\frac{1}{K^2} + \frac{\delta}{K^{3q/2-1}}\right)\)                                                    \\
    \eqref{eq:re} \cite{Kornilov2023IntermediateGradientMethods}  & RE+BG\(\Rightarrow\)AE             & \(\times\)              & \(\cO\left(\max \{ \frac{1}{K^p}, \frac{\sqrt{\delta}}{K^{3p/4}}, \frac{\delta}{K^{p/2}}, \delta^2  \}\right) \) \\
    \eqref{eq:ae} \cite{ArtemVasin2023AcceleratedGradientMethods} &          /                        & \(\times\)              & \(\cO \left(\frac{1}{K^2} + \tilde{R}_K \delta +K\delta^2 \right) \)                                             \\
    \eqref{eq:ae}\textbf{(this work) }                            &          /                        & \checkmark            & \(\cO\left(\frac{1}{A_K} + \sum_{k=0}^{K-1}\hat{u}_k b_k^2 \right) \)
    \\\hline
\end{tabular}
\end{table}

Besides the aforementioned works, accelerated methods with inexact gradient oracles have also been discussed in other settings. The iFGM for the strongly convex setup is presented in \cite{Devolder2013FirstOrderMethodsa}, while \cite{Cohen2018AccelerationNoiseCorrupted} explores a variation of iFGM under the absolute error condition.

To leverage the tradeoff between convergence rate and accumulated error, the ``Intermediate Gradient Method" is introduced in \cite{Devolder2013IntermediateGradientMethods}, which is closely related to iFGM. This approach was subsequently generalized for various settings, see~\cite{Dvurechensky2016StochasticIntermediateGradient, DmitryKamzolov2021UniversalIntermediateGradient, FedorStonyakin2021InexactModelFramework, Gasnikov2019FastGradientDescent, Stonyakin2019GradientMethodsProblems}.

To analyze the effect of gradient inexactness on the convergence bound, \citet{Gannot2022FrequencyDomainAnalysis} and \citet{Aybat2020RobustAcceleratedGradient} examine nonaccelerated and accelerated algorithms under relative and absolute errors, respectively. The latter specifically addresses the balance between robustness to error and convergence rate--see also~\cite{gurbuzbalaban2023robustly}.  Other notable contributions include the analysis of absolute and relative inaccuracy for proximal point methods in \cite{Rockafellar1976MonotoneOperatorsProximal}, the study of proximal gradient methods with absolute error in \cite{Schmidt2011ConvergenceRatesInexact}, the inexact proximal gradient method for weakly convex functions with absolute error in \cite{Khanh2023InexactProximalMethods}, and the minimization of accumulated (controllable) error for optimal computational resource allocation in \cite{Dessel2024OptimalInexactnessSchedules}.

\section{Main results}\label{sec:major-result}
{This section contains our main theoretical results. Section~\ref{sec:main_thms} provides nonasymptotic convergence bounds of GOGM and GFGM algorithms with inexact gradient oracles. Section~\ref{sec:rate-error-tradeoff} exploits the established convergence bounds to obtain the optimal tradeoff between the convergence rate and accumulated error and to obtain the optimal inexactness schedule along the algorithm iterations to minimize the total \(\eta\)-complexity, i.e., \(\sum_{k=1}^K \eta_k\) in \eqref{eq:inexact-oracle} when \(r=1\).}

\subsection{ Main results for iGOGM and iGFGM}~\label{sec:main_thms}
The primary assumptions of the paper are summarized below:
\begin{assumption}\label{asump:basic-assumption}
    The objective function \(f:\mR^d \rightarrow \mR \cup + \infty \) is proper, closed, convex, \(L\)-Lipschitz smooth, and the solution set \(\cX_* := \argmin_\bx f(\bx)\) is nonempty.  The gradient estimate defined as \(\tilde{\grad} f(\bx) = \grad f(\bx) + \be_\bx\) satisfies the absolute error \eqref{eq:ae} condition
    \[
    \norm{\tilde{\grad}f(\bx) - \grad f(\bx)}^2 \leq b_\bx^2,
    \]
    where \(b_\bx\) is an arbitrary non-negative constant for each \(\bx\).
\end{assumption}

The theorem below provides the nonasymptotic convergence bound for the iGOGM algorithm. The optimality measure is discussed in Remark~\ref{rem:opt_measure}.
\begin{theorem}\label{thm:convergence-rate}
	Under \cref{asump:basic-assumption}, the sequence generated by iGOGM (\cref{alg:iGOGM}) satisfies

    \begin{equation*}
	f(\bx_K) - f_* - \frac{1}{2L}\norm{\grad f(\bx_K)}^2 	\leq  \frac{L\norm{\bx_0 - \bx_*}^2}{4A_K} + \sum_{k=0}^{K-1} \hat{u}_k b_k^2,
\end{equation*}
where \(\hat{\bu}\) is defined in \eqref{eq:optimal-u} and
\[
	\sum_{k=0}^{K-1} \hat{u}_k b_k^2 =  \sum_{j=1}^{K} \frac{\left[ A_{j-1} b_{j-1} + 2 \alpha_j \sum_{i=0}^{j-1}\alpha_i b_i \right]^2}{4 L A_K (A_j-\alpha_j^2)}.
\]
In particular, when \(b_i \equiv b\), \(\hat{u}_k\) reduces to
\begin{equation*}
		\hat{u}_k = \frac{A_k(1+2\alpha_{k+1})(A_k+2\alpha_k\alpha_{k+1})}{4LA_K(A_{k+1}-\alpha_{k+1}^2)}+ \sum_{i=k+1}^{K-1} \frac{A_i(1+2\alpha_{i+1})\alpha_k\alpha_{i+1}}{2L A_K(A_{i+1}- \alpha_{i+1}^2)}.
\end{equation*}

\end{theorem}
\begin{remark}\label{rem:opt_measure}
We use the same optimality measure \(f(\bx_K) - f_* - \frac{1}{2L} \norm{\grad f(\bx_K)}^2\) used in the analysis of exact GOGM, as it is still a meaningful one for the inexact problem. For iGFGM and iGOGM, we have
\begin{align*}
	f(\by_{K+1}) = &	f\left (\bx_K-\frac{1}{L}(\grad f(\bx_K) + \be_K)\right) \\
 \leq & f(\bx_K) -\frac{1}{L}\fprod{\grad f(\bx_K),\grad f(\bx_K) + \be_K} + \frac{1}{2L}\norm{\grad f(\bx_K)+\be_K}^2 \\
		=                                                             & f(\bx_K) - \frac{1}{2L} \norm{\grad f(\bx_K)}^2 + \frac{1}{2L}\norm{\be_K}^2.
	\end{align*}
 Comparing to the measure \(f(\bx_K) - f_* - \frac{1}{2L}\norm{\grad f(\bx_K)}^2\), the extra term \(\frac{1}{2L}\norm{\be_K}^2\) can be discarded since the constraint \(\norm{\be_K}^2 \leq b_K^2\) is the only one that involves \(\be_K\) and the PEP solution sets \(\norm{\be_K}^2=b_K^2\). As the algorithm only generates \(\seq{\bx_k}\) and \(\seq{\by_k}\) up to \(k=K\), \(\by_{K+1}\) is a dummy variable used for the theoretical guarantee, and setting \(b_K=0\) does not affect the complexity of the algorithm.
\end{remark}

The proof sketch of Theorem~\ref{thm:convergence-rate} is provided below. The proof contains a couple of crucial but non-intuitive steps, motivated by the PEP analysis, provided in {\cref{sec:PEP-inexact}}. The detailed proof is provided in {\cref{sec:Algorithm-proof}.}

\paragraph{Proof sketch.} The core strategy is to combine inequalities derived from Lipschitz smoothness and convexity conditions. By rearranging terms, applying specific multipliers, and creating a {telescoping sum}, we can isolate and bound the desired optimality measure. This process is guided by the analytical feasible solution identified in \cref{sec:iGOGM}. The key steps are as follows. First, we select two fundamental inequalities and assign their respective coefficients as

 {\fontsize{11.5pt}{12.5pt}\selectfont
 \begin{align*}
		 & f(\bx_{k+1}) - f(\bx_{k}) + \fprod{\grad f(\bx_{k+1}),\bx_{k} - \bx_{k+1}} + \frac{1}{2L}\norm{\grad f(\bx_k)- \grad f(\bx_{k+1})}^2 \leq 0 & \cdots \   & v_{k,k+1} = \frac{A_k}{A_K}         \\
		 & f(\bx_k) - f_* + \fprod{\grad f(\bx_k),\bx_* - \bx_k} + \frac{1}{2L}\norm{\grad f(\bx_k)}^2 \leq 0                                          &  \cdots  \ & v_{*,k} = \frac{A_k - A_{k-1}}{A_K}.
	\end{align*}
}
Summing these inequalities weighted by their multipliers and rearranging the terms to move \(f(\bx_K) - f_*\) on the left-hand side yields a key intermediate inequality. Subtracting \(\frac{1}{2L}\norm{\grad f(\bx_K)}^2\) from both sides gives us the optimality measure we seek to bound, which has the form
\begin{align*}
    &f(\bx_K) - f_* - \frac{1}{2L}\norm{\grad f(\bx_K)}^2                                                                                                                                                                  \\
		\leq  & - \sum_{k=0}^{K-1}\frac{A_k}{A_K}\fprod{\grad f(\bx_{k+1}),\bx_{k} - \bx_{k+1}} - \sum_{k=0}^K\frac{A_k -A_{k-1}}{A_K}\fprod{\grad f(\bx_k),\bx_* - \bx_k}   \\
        &     - \frac{1}{2L}\sum_{k=0}^{K-1}\frac{A_k}{A_K}\norm{\grad f(\bx_k)- \grad f(\bx_{k+1})}^2  - \frac{1}{2L}\sum_{k=0}^{K}\frac{A_k-A_{k-1}}{A_K} \norm{\grad f(\bx_k)}^2 - \frac{1}{2L}\norm{\grad f(\bx_K)}^2.
\end{align*}
By substituting the update rule of the iGOGM algorithm for \(\bx_k\), the inequality becomes
\begin{align*}
    & f(\bx_K) - f_* - \frac{1}{2L}\norm{\grad f(\bx_K)}^2   \\
    \leq &  \sum_{k=0}^{K}\frac{\alpha_k}{A_K} \fprod{\grad f(\bx_k),\bx_0 - \bx_*} - \frac{1}{L}\sum_{k=0}^{K}\frac{A_k}{A_K} \norm{\grad f(\bx_k)}^2 + \frac{1}{L}\sum_{k=0}^{K-1}\frac{A_k}{A_K}\fprod{\grad f(\bx_k),\grad f(\bx_{k+1})} \\
    &   - \frac{2}{L}\sum_{k=0}^{K-1}\sum_{i=0}^{k} \frac{\alpha_{k+1} \alpha_i}{A_K} \fprod{\grad f(\bx_{k+1}),\grad f(\bx_{i}) + \be_i} - \frac{1}{L}\sum_{k=0}^{K-1} \frac{A_k}{A_K}\fprod{\grad f(\bx_{k+1}), \grad f(\bx_k) + \be_k}.
\end{align*}

Next, our goal is to eliminate the inner products with the negative quadratic terms. To achieve this, we express the inner products as a difference of squared norms. For instance, the first inner-product can be bounded using the equality
\begin{align*}
    & \sum_{k=0}^{K}\frac{\alpha_k}{A_K}\fprod{\grad f(\bx_k), \bx_0 - \bx_*} \\
    = & \tau \norm{\bx_0 - \bx_*}^2 + \frac{1}{4 \tau}\norm{\sum_{k=0}^{K}\frac{\alpha_k}{A_K} \grad f(\bx_k)}^2 - \frac{1}{\tau}\norm{\tau(\bx_0-\bx_*) - \frac{1}{2} \sum_{k=0}^{K}\frac{\alpha_k}{A_K}\grad f(\bx_k)}^2.
\end{align*}
After some algebraic manipulation and setting \(\tau = \frac{L}{4A_K}\), we obtain
\begin{align*}
        &  f(\bx_K) - f_* - \frac{1}{2L}\norm{\grad f(\bx_K)}^2
     \leq  \frac{L\norm{\bx_0 - \bx_*}^2}{4A_K}  - \sum_{k=1}^K \frac{A_k - \alpha_k^2}{L A_K}\norm{\grad f(\bx_k)}^2  \\ & \qquad \qquad - \frac{2}{L}\sum_{k=0}^{K-1}\sum_{i=0}^{k} \frac{\alpha_{k+1} \alpha_i}{A_K} \fprod{\grad f(\bx_{k+1}), \be_i}  - \frac{1}{L}\sum_{k=0}^{K-1} \frac{A_k}{A_K}\fprod{\grad f(\bx_{k+1}),  \be_k}.
\end{align*}
We continue this procedure for the remaining inner-product terms involving the error vectors \(\be_k\). This is accomplished by introducing the quadratic term
\[
\frac{1}{LA_K(A_{k+1} - \alpha_{k+1}^2)}\norm{(A_{k+1} - \alpha_{k+1}^2)\grad f(\bx_{k+1}) + \sum_{i=0}^k\alpha_{k+1}\alpha_i \be_i + \frac{1}{2}A_k \be_k}^2.
\]
As this step is algebraically intensive, we omit the derivation and present the final inequality as
	\begin{align*}
      f(\bx_K) - f_* - \frac{1}{2L}\norm{\grad f(\bx_K)}^2 \leq  & \frac{LR^2}{4A_K}   +  \sum_{k=0}^{K-1} \hat{u}_k b_k^2,
	\end{align*}
where \(\hat{\bu}\) is given in \eqref{eq:optimal-u}.

The theorem below provides the nonasymptotic convergence bound for the iGFGM algorithm. The proof is provided in \cref{appx:iGFGM}.
\begin{theorem}\label{thm:convergence-iFGM}
    Under \cref{asump:basic-assumption}, the sequence generated by iGFGM (\cref{alg:iGFGM}) satisfies
	\begin{equation*}
        f(\bx_K) - f_* - \frac{1}{2L}\norm{\grad f(\bx_K)}^2 \leq \frac{L\norm{\bx_0 - \bx_*}^2}{2A_K} + \sum_{k=0}^{K-1}\hat{u}_k b_k^2,
    \end{equation*}
    where \(\hat{\bu}\) is defined in \eqref{eq:optimal-u-FGM}. In particular, when \(b_i \equiv b\), \(\hat{u}_k\) reduces to
    \begin{equation*}
        \hat{u}_k = \frac{A_k^2 (1+\alpha_{k+1})}{2L A_K(2 A_{k+1} - \alpha_{k+1}^2)} + \sum_{i=k+1}^{K} \frac{\alpha_k A_{i-1}\alpha_i (1+\alpha_i)}{2LA_K(2A_i - \alpha_i^2)}.
    \end{equation*}
\end{theorem}

A special case of the iGOGM algorithm is the iOGM-a algorithm presented in \cref{sec:rate-error-tradeoff}, which shows the error accumulation of $\cO(K^3)$ in the constant error regime. Following a question by one of the reviewers about the lower bound on error accumulation, we investigate the lower bound for error accumulation of first-order methods that satisfy two properties: 1. Affine error property, 2. Curvature-dependent error amplification property. For quadratic functions (which we know are smooth), general first-order methods satisfy the affine error property--see~\cref{prop:affine-error-response}. However, curvature-dependent error amplification is generally satisfied by iGOGM-type accelerated methods--see~\cref{thm:iGOGM-curvature-dependent-amplification}-- and not, e.g., by gradient descent (GD). We then manage to establish a lower bound for iGOGM-type methods on the quadratic function. This lower bound is presented informally below in Theorem~\ref{thm:lower-bound-informal}, and in detail in Theorem~\ref{thm:curvature-dependent-quadratic-lower-bound}.
\begin{theorem}[Curvature-dependent quadratic lower bound]
\label{thm:lower-bound-informal}
Let \(L>0\), \(b\geq 0\), \(K\geq 1\), and let \(d\geq 2\). Consider the iOGM-a method applied to the class of \(L\)-smooth
convex functions over \(\mR^d\) under the inexact-gradient oracle
\(\tilde{\grad} f(\bx)=\grad f(\bx)+\be\), where \(\norm{\be}\leq b\).
Then, under the small-curvature condition of
\cref{lem:small-curvature-amplification-iOGMa}, we have
\[
    \sup_{f\in\mathcal F_L(\mR^d)}
    \sup_{\norm{\be_i}\leq b}
    \left\{
        f(\bx_K)-f^\star
    \right\}
    \geq
    \Omega\left(\frac{b^2}{L}K^2\right).
\]
\end{theorem}

    The convergence bound guarantees in \cref{thm:convergence-rate,thm:convergence-iFGM} apply to the full generalized families iGOGM and iGFGM, and therefore provide a unified framework for quantifying the tradeoff between acceleration and error accumulation. However, the bound of $\cO(K^3)$ for iOGM-a compared to the lower bound of $\Omega(K^2)$ derived above shows the suboptimality of the derived bound for error accumulation.  
    Motivated by this gap, we further derive sharper error bounds for iFGM and iOGM in \cref{thm:ifgm-K-square-error-bound,thm:iogm-K-square-error-bound}. These refined bounds scale as \(\cO(K^2)\) under constant oracle error on error accumulation, and only sacrifice $O(1)$ constant in the decaying term $\cO(K^{-2})$ and, therefore, match the lower bound up to constants.

\begin{theorem}[Tighter bound for error accumulation of iFGM]
\label{thm:ifgm-K-square-error-bound}
For iFGM \cref{alg:iGFGM} with \(\lambda_k\equiv 1\), i.e.,
\(A_k=\alpha_k^2\), under \cref{asump:basic-assumption}, the convergence rate
can be bounded as
\[
    f(\bx_K)-f_*-\frac{1}{2L}\|\nabla f(\bx_K)\|^2  \leq \frac{4LR^2}{A_K}  + \frac{2c_{\rm H}}{L} \sum_{i=0}^{K-1}\alpha_{i+1}b_i^2,
\]
where \(c_{\rm H}>0\) is a constant independent of \(K\). In particular,
if \(b_i\equiv b\), then
\[
    f(\bx_K)-f_*-\frac{1}{2L}\|\nabla f(\bx_K)\|^2 \leq \cO\left(\frac{LR^2}{K^2}+\frac{K^2b^2}{L}\right).
\]
\end{theorem}
The proof is provided in \cref{appx:iFGM-K2}

\begin{theorem}[Tighter bound for error accumulation of iOGM]
\label{thm:iogm-K-square-error-bound}
For iOGM \cref{alg:iGOGM} with \(\lambda_k\equiv1\), i.e.,
\(A_k=\alpha_k^2\), under \cref{asump:basic-assumption}, the convergence rate
can be bounded as
\[
    f(\bx_K)-f_*-\frac{1}{2L}\|\nabla f(\bx_K)\|^2  \leq \frac{4LR^2}{A_K}  + \frac{8c_{\rm H}}{L}  \sum_{i=0}^{K-1}\alpha_{i+1}b_i^2,
\]
where \(c_{\rm H}\) is the same constant in \cref{thm:ifgm-K-square-error-bound}, independent of \(K\).
In particular, if \(b_i\equiv b\), then
\[
    f(\bx_K)-f_*-\frac{1}{2L}\|\nabla f(\bx_K)\|^2  \leq \cO\left( \frac{LR^2}{K^2} +  \frac{K^2b^2}{L}  \right).
\]
\end{theorem}
The proof can be found in \cref{appx:iOGM-K2}

    The above two theorems provide sharper accumulated-error bounds for iOGM and iFGM, matching the lower bound derived for this class of algorithms. These results for the specific algorithms suggest that the larger \(\cO(K^3b^2/L)\) term obtained from the general bounds may be reducible. We propose the following conjecture for the full iGOGM and iGFGM families.

\begin{conjecture}[Convergence bounds for iGOGM and iGFGM]
\label{conj:igogm-igfgm-sharp-error-scaling}
Under \cref{asump:basic-assumption}, consider either iGFGM in \cref{alg:iGFGM} or iGOGM in \cref{alg:iGOGM} with any feasible stepsize sequence \(\seq{\alpha_i}\). The convergence bounds of both generalized families are conjectured to satisfy the rate
\[
    f(\bx_K)-f_*-\frac{1}{2L}\|\nabla f(\bx_K)\|^2  \leq   \cO\left( \frac{LR^2}{A_K}  + \frac{\sum_{i=0}^{K-1}\alpha_{i+1} b_i^2}{L} \right)
\]
\end{conjecture}

\subsection{Exploiting the rate-error tradeoff and finding the optimal inexactness schedule}\label{sec:rate-error-tradeoff}
Based on the convergence bound of the iGOGM algorithm, this section first looks into the tradeoff between the convergence rate and accumulated error by changing the stepsize \emph{given a fixed gradient inexactness along iterations}. Second, we aim to optimize the inexactness levels along the iterations, for a given stepsize, so the total cost of the oracle that controls the inexactness is minimized. Similar analysis follows for the iGFGM algorithm, but it is not included for brevity.

\subsubsection{Convergence rate and the accumulated error tradeoff}
Considering the bound \(\frac{L\norm{\bx_0 - \bx_*}^2}{4A_K} + \sum_{k=0}^{K-1}u_k b_k^2\), with \({A_k},{u_k}\) being functions of \({\alpha_k}\), one can propose to minimize the bound with respect to \({\alpha_k}\). However, we note that such a problem is a high-order polynomial optimization and its solution is intractable in general.

Even if one is satisfied with the numerical solution to find the optimized algorithm, it is preferable to solve the minimax problem \eqref{prob:minmax} numerically following the relaxation approach in \cite{Drori2014PerformanceFirstOrder} as calculated in \cref{sec:numerical-optimized-algorithm}.
The resulting optimized stepsize sequence and convergence bound are better than the solution of the polynomial optimization problem

as \(\hat{\bu}\) in \cref{lem:optimal-u-fixed-tv} is optimal only after fixing \(\bv\) to the predefined values used in the exact OGM.

We narrow the iGOGM down to iOGM-a, with the stepsize \(\alpha_i = \frac{i+a}{a}\) and \(A_k = \sum_{i=0}^k\frac{i+a}{a}= \frac{(k+2a)(k+1)}{2a}\). The condition \(A_k > \alpha_k^2\)  holds for any \(k\) when \(a> 2\). Note that when \(K \gg a\), the convergence rate is still \(\cO(K^{-2})\) {in the exact case}. Furthermore, \cite{Kim2018GeneralizingOptimizedGradient} shows that the exact algorithm OGM-a has the asymptotic worst-case bound for function value as \(\frac{a}{2}K^{-2}\) and the smallest gradient norm as \(\frac{a \sqrt{6}}{2\sqrt{a-2}}K^{-1.5}\); it achieves the best performance for both measures with \(a=4\). The simplicity of the stepsize as well as its good performances over function value and gradient norm makes OGM-a a proper choice under exact oracles, and worth further study under inexact oracles as well.

For the constant error case \(b_k \equiv \bar{b}\), replacing the stepsizes of iOGM-\(a\) in \cref{thm:convergence-rate} gives the accumulated error \(\bar{b}^2\sum_{k=0}^{K-1}\hat{u}_k\). In the following analysis, we provide a uniform upper bound for any \(a>2\). With \(\alpha_k=(k+a)/a\) and \(A_k=\frac{(k+1)(k+2a)}{2a}\), we have
\[
    A_{k+1}-\alpha_{k+1}^2  = \frac{(k+1)((a-2)k+2a^2-2a-2)}{2a^2} \geq \frac{(a-2)(k+1)(k+2a)}{2a^2}.
\]
Hence
\(
\frac{A_k}{A_{k+1}-\alpha_{k+1}^2}\leq \frac{a}{a-2}.
\)
Using this relation in \eqref{eq:theoretical-u}, together with
\[
    1+2\alpha_{k+1}\leq \frac{3(K+a)}{a}, \quad  \alpha_{k+1}\leq \frac{K+a}{a}, \quad   A_k+2\alpha_k\alpha_{k+1}\leq 3A_K,
\]
we obtain
\begin{equation}\label{eq:sum-u-iOGM-a}
    \sum_{k=0}^{K-1}\hat{u}_k  \leq \frac{4K(K+a)^2}{La(a-2)}.
\end{equation}
Therefore,
\[
    \bar{b}^2\sum_{k=0}^{K-1}\hat{u}_k  \leq \frac{4\bar{b}^2K(K+a)^2}{La(a-2)}  = \cO\left(\frac{\bar{b}^2K(K+a)^2}{La(a-2)}\right),
\]
where the hidden constant is independent of \(K\) and \(a\).

The above bound provides a general description of the dependence on both \(K\) and \(a\). For any fixed \(2<a \ll K\), the accumulated error is \(\cO(\bar{b}^2K^3/L)\). When \(K\ll a\), the bound reduces to \(\cO(\bar{b}^2K/L)\) up to the factor \(a/(a-2)\). To illustrate the actual values of \(\hat{u}_k\), we provide a numerical result of \(\hat{u}_k\) for a given \(K\) with different \(a\) in \cref{fig:uk-vs-a}. For the same \(a\) and \(k\), bigger \(K\) results in bigger \(\hat{u}_k\). For each \(a\) value, \(\hat{u}_k\) first increases and then decreases with \(k\). For a fixed \(k\) and \(K\), bigger \(a\) results in smaller \(\hat{u}_k\) values.

\begin{figure}[hbt]
	\centering
	\includegraphics[width=0.8\textwidth]{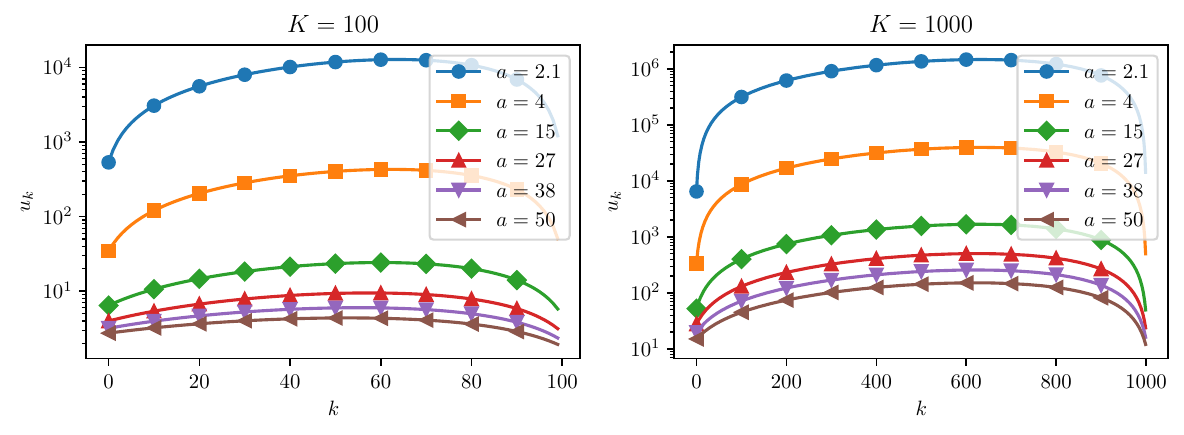}
	\caption{Values of \(\hat{u}_k\) for OGM-\(a\) with different values of \(a\) and \(K\) under constant \(b\).}\label{fig:uk-vs-a}

\end{figure}

\begin{figure}[hbt]
	\centering
	\includegraphics[width=0.8\textwidth]{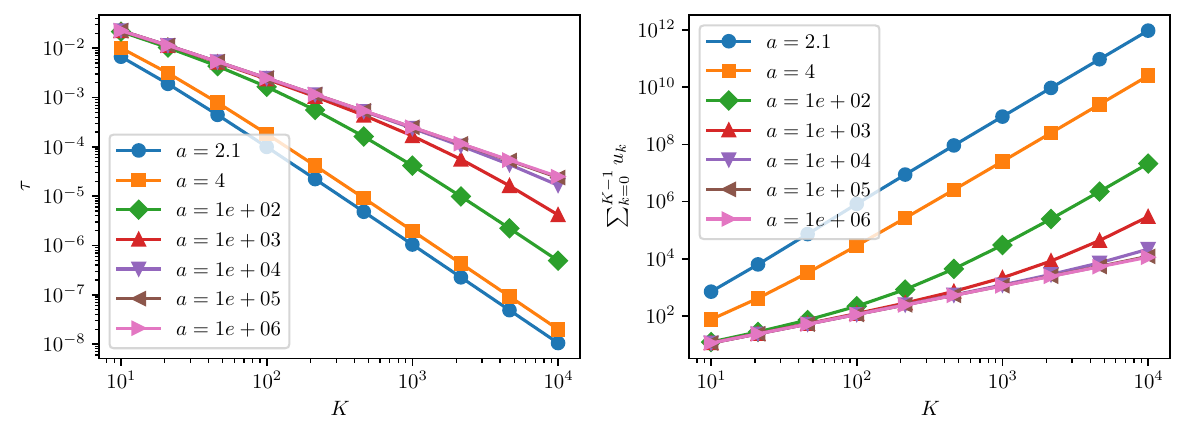}
	\caption{Convergence rate and accumulated error for iOGM-\(a\) for different \(a\) and \(K\) values.}\label{fig:tau-u-vs-a-K}
\end{figure}

Based on this result, to reduce the accumulated error, one can consider increasing \(a\). However, this will adversely slow down the convergence rate as \(A_K = \frac{(K+2a)(K+1)}{2a}\) and decreases when \(a\) is increased. We provide the comparison of convergence rate and accumulated error with different iteration numbers \(K\), and \(a\) in \cref{fig:tau-u-vs-a-K}, where \(\tau = \frac{L}{4A_K}\). The convergence rate \(\tau\) is smaller for smaller \(a\) while the accumulated error is bigger.
This result illustrates the tradeoff shown in \eqref{eq:sum-u-iOGM-a}.
When \(a = 4\), \(\sum_{k=0}^{K-1} u_k\) is between \(K^2\) and \(K^3\), better than our relaxed analysis result. Another interesting fact is that when \(K \ll a\) (\(a=10^5\) and \(a = 10^6\)), the convergence rate \(\tau = \cO(K^{-1})\) and the accumulate error is \(\cO(K^1)\). If \(K \ll a\), we have \(\alpha_k = 1\), and the proposition below provides the convergence bound of this extreme scenario.

\begin{proposition}
	With the stepsize  defined as \(\alpha_k \equiv 1\), the sequence generated by iGOGM (\cref{alg:iGOGM}) satisfies
	\begin{equation*}
		f(\bx_K) - f_* - \frac{1}{2L}\norm{\grad f(\bx_K)}^2 \leq \frac{L\norm{\bx_0 - \bx_*}^2}{4(K+1)} + \sum_{k=0}^{K-1} \frac{3(2K-k+1)}{4L(K+1)} \norm{\be_k}^2.
	\end{equation*}
	Furthermore, when \(\norm{\be_k} \leq \bar{b}, \forall k\), the above inequality becomes
	\begin{equation*}
		f(\bx_K) - f_* - \frac{1}{2L}\norm{\grad f(\bx_K)}^2 \leq \frac{L\norm{\bx_0 - \bx_*}^2}{4(K+1)} + \frac{9K}{8L} \bar{b}^2.
	\end{equation*}
\end{proposition}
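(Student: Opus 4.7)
The plan is to apply \cref{thm:convergence-rate} directly with the constant stepsize $\alpha_k \equiv 1$ and reduce all constants to closed form. First, I would check admissibility: the recursion $A_k = A_{k-1} + \alpha_k$ with $A_0 = \alpha_0 = 1$ yields $A_k = k+1$, so $\alpha_k^2 = 1 \leq k+1 = A_k$ and the stepsize condition of \cref{alg:iOGM} holds. In particular $A_K = K+1$ and $A_{k+1} - \alpha_{k+1}^2 = (k+2)-1 = k+1$, so \cref{thm:convergence-rate} applies and the exact-rate term becomes $\frac{L\norm{\bx_0-\bx_*}^2}{4(K+1)}$ as claimed.

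Next I would substitute these values into the explicit formula for $u_k$ from \cref{thm:convergence-rate}, namely
\begin{align*}
u_k = \frac{A_k(1+2\alpha_{k+1})(A_k+2\alpha_k\alpha_{k+1})}{4LA_K(A_{k+1}-\alpha_{k+1}^2)} + \sum_{i=k+1}^{K-1} \frac{A_i(1+2\alpha_{i+1})\alpha_k\alpha_{i+1}}{2L A_K(A_{i+1}- \alpha_{i+1}^2)}.
\end{align*}
With $\alpha_j \equiv 1$ and $A_j = j+1$, the $(k+1)$ in the numerator of the first fraction cancels the $(k+1)$ in the denominator, leaving $\frac{3(k+3)}{4L(K+1)}$. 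Similarly, each summand in the second term reduces to $\frac{3}{2L(K+1)}$, so the sum equals $\frac{3(K-1-k)}{2L(K+1)}$. Combining,
\begin{align*}
u_k = \frac{3(k+3)}{4L(K+1)} + \frac{6(K-1-k)}{4L(K+1)} = \frac{3(2K-k+1)}{4L(K+1)},
\end{align*}
which is exactly the coefficient in the first inequality.

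For the second inequality, assuming $\norm{\be_k} \leq \bar{b}$ I would just sum these coefficients. Using the standard arithmetic-sum identity
\begin{align*}
\sum_{k=0}^{K-1}(2K-k+1) = K(2K+1) - \tfrac{(K-1)K}{2} = \tfrac{3K(K+1)}{2},
\end{align*}
I get $\sum_{k=0}^{K-1} u_k = \frac{3}{4L(K+1)} \cdot \frac{3K(K+1)}{2} = \frac{9K}{8L}$, which multiplied by $\bar{b}^2$ gives the stated uniform-error bound.

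Since the proposition is a direct specialization of the already-proved \cref{thm:convergence-rate}, there is no real obstacle: everything reduces to verifying the stepsize assumption and carrying out two small algebraic simplifications. The only point that requires a little care is keeping track of the cancellation $A_{k+1}-\alpha_{k+1}^2 = k+1 = A_k$, which is what makes both terms of $u_k$ collapse to such simple rational expressions in $k$ and $K$.
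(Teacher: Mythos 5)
Your proposal is correct and matches the paper's intended argument: the proposition is a direct specialization of \cref{thm:convergence-rate} with $\alpha_k\equiv 1$, $A_k=k+1$ (achievable via $\lambda_{k+1}=1/A_{k+1}$, so $\alpha_k^2\le A_k$ holds), and your algebra for $u_k=\frac{3(2K-k+1)}{4L(K+1)}$ and $\sum_{k=0}^{K-1}u_k=\frac{9K}{8L}$ checks out exactly.
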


\subsubsection{Optimal inexactness schedule}
Since the derived \(\hat{u}_k\) is not constant, it will be beneficial to set \(b_k\) based on the value of \(\hat{u}_k\). Recall the definition of \(\mathbb{\eta}\) from \eqref{eq:inexact-oracle}, when \(r=1\), we can minimize the total \(\eta\)-complexity, i.e., \(\sum_{k=0}^{K-1}\eta_k\), so that the accumulated error does not exceed the convergence rate as
\begin{align*} \tag{OPT-b}\label{tag:optimal-eta}
	\min_{\bb\geq \mathbf{0}} \quad  \sum_{k=0}^{K-1}\eta_k =  \sum_{k=0}^{K-1} h^{-1}(b_k)
	\quad \st \quad                        \sum_{k=0}^{K-1}\hat{u}_k b_k^2 \leq \frac{LR^2}{4A_K}.
\end{align*}
This is a convex optimization problem if \(h\) is convex and decreasing over a convex quadratic constraint, and at its optimal solution, the constraint is binding. Hence, the inequality constraint can be replaced with the equality. Let \(\lambda\) be the dual multiplier of the constraint, the Lagrangian function is
\[
	\cL(\bb,\lambda) = \sum_{k=0}^{K-1}h^{-1}(b_k) + \lambda \left( \sum_{k=0}^{K-1} \hat{u}_k b_k^2 - \frac{LR^2}{4A_K}\right).
\]
At the optimal solution \(\bb^*\) and \(\lambda^*\), from the optimality conditions, we have
\begin{align}
	\grad _{b_k} h^{-1}(b_k^*) + 2\lambda^* b_k^* u_k    & = 0 , \quad k = 0,1\cdots,K-1, \label{eq:KKT-1}\\
	\sum_{k=0}^{K-1}u_k (b_k^*)^2 - \frac{LR^2}{4A_K} & = 0. \label{eq:KKT-2}
\end{align}

Notice the coefficients \(\hat{u}_k\) in \eqref{eq:optimal-u} depend on the prescribed inexactness vector \(\bb\) through
\(
    \hat{u}_i(\bb)=\frac{(\bH\bb)_i}{b_i}.
\)
Hence, if \(\hat{\bu}\) is re-optimized while optimizing over \(\bb\), the accumulated error term becomes
\[
    \sum_{i=0}^{K-1}\hat{u}_i(\bb)b_i^2=\bb^\top\bH\bb,
\]
and the scheduling problem is no longer separable. The corresponding KKT condition involves
\[
    \grad_{b_k}h^{-1}(b_k^*)+2\lambda^*(\bH\bb^*)_k=0.
\]
Since \((\bH\bb^*)_k\) depends on the entire vector \(\bb^*\), this condition generally does not yield closed-form schedules.
To retain explicit formulas, we therefore fix \(\bu\) independently of \(\bb\), using the value obtained from the constant-error case.
Under this situation, we give closed-form expressions for \(\bb^*\) for two examples of \(h(\eta)\) \cite{Liu2023AdaptiveStochasticOptimization}, which are two common structures of the inexact gradient oracles.

\begin{lemma}[\textbf{\(h(\eta)\) with power law decay}]\label{lem:power-law}
	Let \(h(\eta) = c_1\eta^{-c_2}\) with \(c_1,c_2>0\) to be two constants. The inverse function and its gradient are
	\[
		h^{-1}(b) = 	\left(\frac{c_1}{b}\right)^{1/c_2} \quad\text{and} \quad \grad _b h^{-1}(b) = -\frac{c_1^{1/c_2} b^{-(1+c_2)/c_2}}{c_2}.
	\]
	Furthermore, the optimal primal and dual solutions of \eqref{tag:optimal-eta} are
	\begin{align*}
	    b_k^* = & \frac{\sqrt{L}R }{2\sqrt{A_K \sum_{k=0}^{K-1} u_k^{1/(1+2c_2)} }u_k^{c_2/(1+2c_2)}} , \\
        \lambda^* = & \frac{c_1^{1/c_2}}{2c_2}\left(\frac{LR^2}{4A_K}\right)^{-(1+2c_2)/(2c_2)}\left(\sum_{k=0}^{K-1} u_k^{1/(1+2c_2)}\right)^{(1+2c_2)/(2c_2)}.
	\end{align*}
\end{lemma}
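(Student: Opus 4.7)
The approach is to solve the KKT system \eqref{eq:KKT-1}--\eqref{eq:KKT-2} in closed form. Since $h^{-1}(b) = (c_1/b)^{1/c_2}$ is strictly convex and strictly decreasing on $(0,\infty)$ for any $c_2>0$, and the constraint $\sum_k u_k b_k^2 \leq LR^2/(4A_K)$ is a quadratic level set, the problem \eqref{tag:optimal-eta} has a unique minimizer. The monotonicity of $h^{-1}$ already guarantees that at the optimum the constraint is binding, so the formulas just above the statement apply.

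The first step is to verify the expressions for $h^{-1}$ and $\nabla_b h^{-1}$; these follow immediately from the definition of $h$. Next I would isolate $b_k^*$ from \eqref{eq:KKT-1}. Rearranging gives
\[
(b_k^*)^{(1+2c_2)/c_2} = \frac{c_1^{1/c_2}}{2 c_2 \lambda^* u_k},
\]
so that
\[
b_k^* \;=\; \frac{c_1^{1/(1+2c_2)}}{(2 c_2 \lambda^*)^{c_2/(1+2c_2)}} \, u_k^{-c_2/(1+2c_2)}.
\]
Substituting this into the binding version of \eqref{eq:KKT-2} and using the identity $1 - 2c_2/(1+2c_2) = 1/(1+2c_2)$ collapses the sum to
\[
\frac{c_1^{2/(1+2c_2)}}{(2 c_2 \lambda^*)^{2c_2/(1+2c_2)}} \sum_{k=0}^{K-1} u_k^{1/(1+2c_2)} \;=\; \frac{L R^2}{4 A_K}.
\]
Raising both sides to the power $(1+2c_2)/(2c_2)$ and isolating $\lambda^*$ yields the claimed closed form for $\lambda^*$.

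Finally, I would back-substitute the value of $(2c_2\lambda^*)^{c_2/(1+2c_2)}$ extracted from this equation (namely $\tfrac{2\sqrt{A_K}}{\sqrt{L}\,R}\, c_1^{1/(1+2c_2)}\,\bigl(\sum_k u_k^{1/(1+2c_2)}\bigr)^{1/2}$) into the formula above for $b_k^*$; the factors of $c_1^{1/(1+2c_2)}$ cancel cleanly, leaving
\[
b_k^* \;=\; \frac{\sqrt{L}\,R}{2\sqrt{A_K \sum_{k=0}^{K-1} u_k^{1/(1+2c_2)}}}\cdot u_k^{-c_2/(1+2c_2)},
\]
which matches the stated expression (the $d$ in the exponent of $u_k$ in the denominator is a typo for $c_2$). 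The only real obstacle is bookkeeping: carefully tracking the fractional exponents on $c_1$, $u_k$, $A_K$, $L$, $R$, and the sum $\sum_k u_k^{1/(1+2c_2)}$ so that all cancellations come out correctly. Convexity of the problem ensures that these KKT solutions are globally optimal, completing the argument.
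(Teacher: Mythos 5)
Your proposal is correct and follows essentially the same route as the paper's proof: solve the stationarity condition \eqref{eq:KKT-1} for $b_k^*$ in terms of $\lambda^*$, substitute into the binding constraint \eqref{eq:KKT-2} to obtain $\lambda^*$, and back-substitute to recover $b_k^*$ (your computation is in fact slightly more explicit than the paper's). Your reading of the exponent $d$ as a typo for $c_2$ matches the formula the calculation produces, and the $c^{1/c_2}$ in the stated $\lambda^*$ is likewise $c_1^{1/c_2}$.
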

\begin{proof}{Proof}
With power law decay, \eqref{eq:KKT-1} is written as
\begin{equation*}
        - \frac{c_1^{1/c_2} (b_k^*)^{-(1+c_2)/c_2}}{c_2} + 2\lambda^* b_k^* u_k = 0, \quad k=0,1,\cdots, K-1.
\end{equation*}
From this equation, \(b_k^*\) can be calculated as
\begin{equation}\label{eq:power-bk}
b_k^* = \left(\frac{2\lambda^* u_k c_2}{c_1^{1/c_2}}\right)^{- \frac{c_2}{1+2c_2}}.
\end{equation}
Inserting this equation into \eqref{eq:KKT-1}, and solve for \(\lambda^*\) we get
\begin{equation*}
    \lambda^* = \frac{c_1^{1/c_2}}{2c_2}\left(\frac{LR^2}{4A_K}\right)^{-(1+2c_2)/(2c_2)}\left(\sum_{k=0}^{K-1} u_k^{1/(1+2c_2)}\right)^{(1+2c_2)/(2c_2)}.
\end{equation*}
Replacing this solution into \eqref{eq:power-bk}, the final result can be derived.
\Halmos \end{proof}

\begin{lemma}[\textbf{\(h(\eta)\) with exponential decay}]\label{lem:exp-decay}
	Let \(h(\eta) = q_1 q_2^{-\eta}\) with \(q_1>0\) and \(q_2>1\) be two constants. Then
	\[
		h^{-1}(b) = \frac{\log q_1 - \log b}{\log q_2}, \quad \grad_b h^{-1}(b) = -\frac{1}{b \log q_2},
	\]
	and, the optimal primal and dual solutions are
	\begin{equation*}
		b_k^* =  \sqrt{\frac{LR^2}{4KA_K u_k}}, \quad \lambda^* = \frac{2KA_K}{LR^2 \log q_2}.
	\end{equation*}
\end{lemma}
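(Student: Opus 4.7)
The plan is to mirror the proof of \cref{lem:power-law} by specializing the KKT optimality conditions \eqref{eq:KKT-1}--\eqref{eq:KKT-2} to the exponential-decay oracle. First I would record the two needed derivatives: inverting $b=q_1 q_2^{-\eta}$ gives $h^{-1}(b)=(\log q_1-\log b)/\log q_2$, and differentiating yields $\nabla_b h^{-1}(b) = -1/(b\log q_2)$. Since $h^{-1}$ is convex and decreasing in $b$ and the feasible set is convex, the problem \eqref{tag:optimal-eta} is a convex program, so KKT conditions are both necessary and sufficient, and the inequality constraint binds at optimum (reducing \eqref{eq:KKT-2} to an equality, as already argued before \cref{lem:power-law}).

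Next I would substitute the derivative into the stationarity condition \eqref{eq:KKT-1}, obtaining
\begin{equation*}
-\frac{1}{b_k^*\log q_2}+2\lambda^* u_k b_k^* = 0,
\end{equation*}
which rearranges to $(b_k^*)^2 = \frac{1}{2\lambda^* u_k \log q_2}$. The pleasant feature of the exponential case is that the $u_k$-dependence in $(b_k^*)^2$ is exactly $1/u_k$, so when I plug this back into the binding primal-feasibility equation \eqref{eq:KKT-2}, the factors of $u_k$ cancel inside the sum:
\begin{equation*}
\sum_{k=0}^{K-1} u_k\cdot\frac{1}{2\lambda^* u_k\log q_2}=\frac{K}{2\lambda^*\log q_2}=\frac{LR^2}{4A_K}.
\end{equation*}
Solving for $\lambda^*$ gives $\lambda^* = \frac{2KA_K}{LR^2\log q_2}$, matching the claim.

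Finally, substituting this $\lambda^*$ back into the expression for $(b_k^*)^2$ yields $b_k^* = \sqrt{\tfrac{LR^2}{4KA_K u_k}}$, completing the proof. There is essentially no hard step: the only reason this case is substantially cleaner than \cref{lem:power-law} is the $u_k$ cancellation above, which removes the need to carry through a sum of fractional powers of $\seq{u_k}$. The one item I would double-check before closing is that $\lambda^*>0$ (immediate from $q_2>1$) and $b_k^*>0$ (immediate from $L,R,A_K,u_k>0$), so the primal-dual pair lies in the admissible cone and the KKT conditions are met.
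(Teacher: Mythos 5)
Your proposal is correct and follows essentially the same route as the paper: specialize the stationarity condition \eqref{eq:KKT-1} to get $u_k (b_k^*)^2 = \tfrac{1}{2\lambda^* \log q_2}$, substitute into the binding constraint \eqref{eq:KKT-2} to solve for $\lambda^*$, then back-substitute to obtain $b_k^*$. The extra remarks on convexity and positivity are fine but add nothing beyond the paper's argument.
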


\begin{proof}{Proof}
    Similar to the proof of \cref{lem:power-law}, from \eqref{eq:KKT-1} we have
    \begin{equation}
        u_k (b_k^*)^2 = \frac{1}{2\lambda^* \log q_2}.
    \end{equation}
    Inserting this into \eqref{eq:KKT-2}, we have
    \begin{equation*}
        \lambda^* = \frac{2K A_K}{LR^2 \log q_2}.
    \end{equation*}
    By the previous two equations, we can derive the final solution.
\Halmos \end{proof}

To quantify the improvement of the optimized inexactness level, we calculate the total \(\eta\)-complexity for iOGM-4 algorithm for the two structures of \(h(\eta)\) functions. We fix \(L=1\) and \(R=1\), then solve the constant \(\bar{b}\) through
\begin{equation*}
    \bar{b}^2 \sum_{k=0}^{K-1}\hat{u}_k \leq \frac{LR^2}{4A_K},
\end{equation*}
and calculate the optimal \(\bar{b}\) by the solutions we derived in \cref{lem:power-law,lem:exp-decay}.
In \cref{fig:eta-pow}, we illustrate the result for power law decay with four different values of \(c_2\). Note that the \(\eta\)-complexity decreases by increasing \(c_2\). If we focus on the improvement of \(\eta\)-complexity for the optimized \(\seq{b_k}\), the improvement is in the same order as the complexity of constant \(b\), which shows that optimizing over \(b\) can decrease the \(\eta\)-complexity significantly.
As for the exponential decay in \cref{fig:eta-exp}, the improvement is not as large as that of the power law decay. Although the improvement is not as large as that of the power law decay, considering its absolute value, it shows a considerable improvement.

\begin{figure}[hbt]
	\centering
	\includegraphics[width=0.8\textwidth]{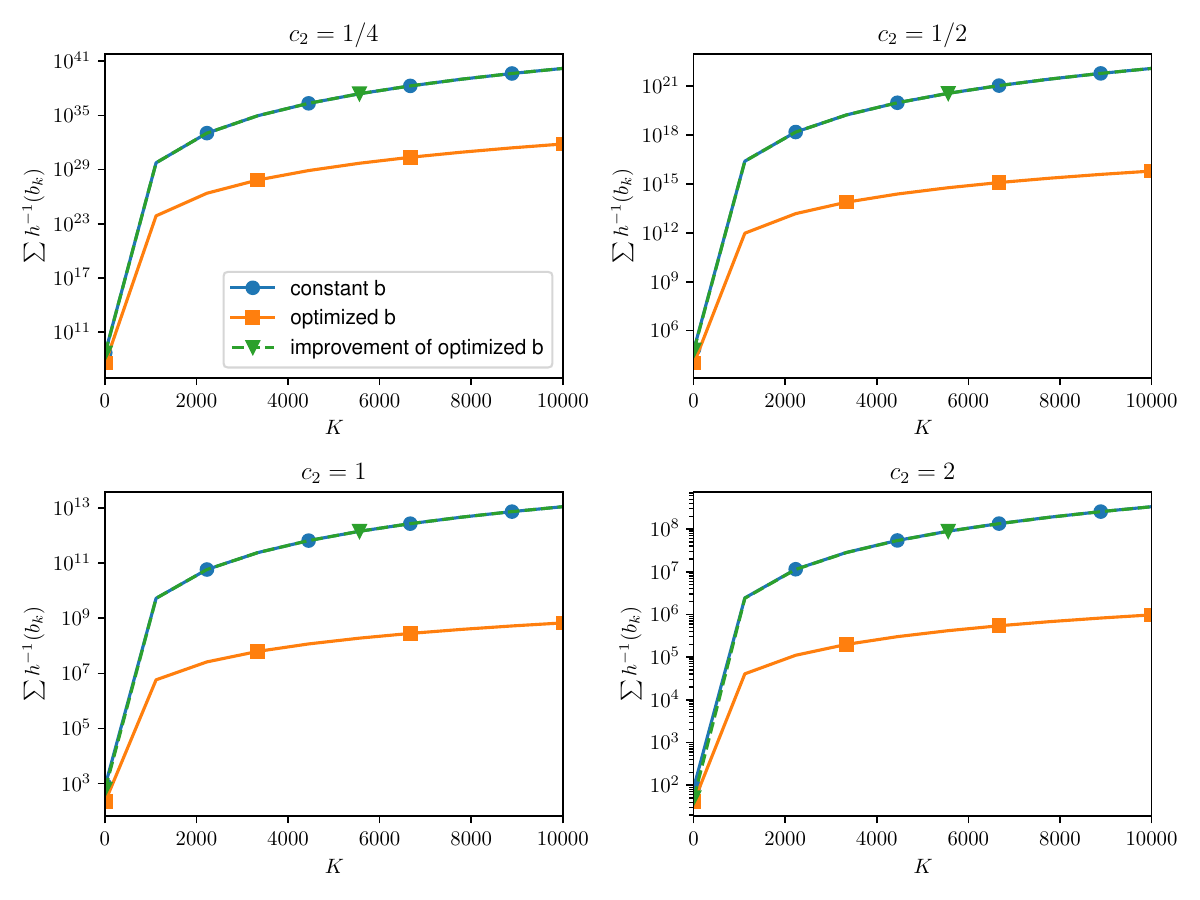}
	\caption{Total \(\eta\)-complexity of iOGM-4 for \(h(\eta)\) with power law decay with \(c_1=1\) and \(L=1, R=1 \).}\label{fig:eta-pow}
\end{figure}

\begin{figure}[hbt]
	\centering
	\includegraphics[width=0.4\textwidth]{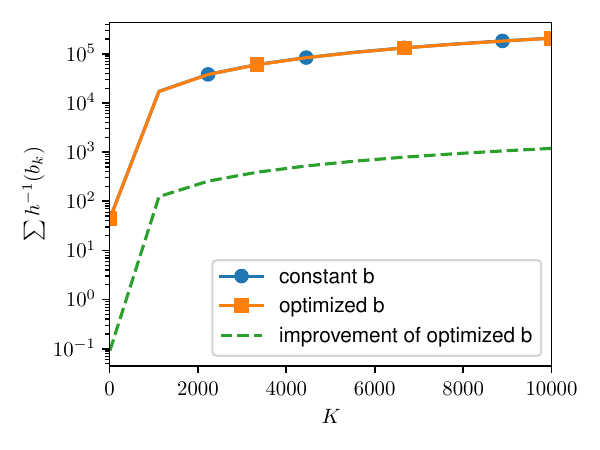}
	\caption{Total \(\eta\)-complexity of iOGM-4 for \(h(\eta)\) with exponential decay with \(q_1=1,q_2=e\) and \(L=1,R=1\).}\label{fig:eta-exp}
\end{figure}

\section{A lower-bound mechanism through affine error amplification}
\label{sec:lower-bound}
We next describe a simple lower-bound mechanism for fixed-coefficient
first-order methods under the absolute inexact-gradient oracle. The main idea
is to quantify how oracle errors propagate through the algorithm when the
method is applied to quadratic functions.

\begin{definition}[Affine error-response property]
Consider a fixed-coefficient first-order method applied to an inexact gradient
oracle of the form \eqref{eq:ae}.
Let \(\bx_K\) denote the final iterate generated by the method under the error
sequence \(\be_0,\ldots,\be_{K-1}\). Let \(\bx_K^{\rm ex}\) denote the final iterate
generated by the same method, from the same initial point and on the same
function, but under the exact-gradient oracle, i.e., with \(\be_i=\bo\) for all
\(i=0,\ldots,K-1\).

We say that the method has the affine error-response property at iteration \(K\) for a given function if there exist linear maps
\(
    \mathbf{W}_{K,0},\ldots,\mathbf{W}_{K,K-1}
\)
such that, for every admissible error sequence,
\[
    \bx_K-\bx_K^{\rm ex}
    = -\frac{1}{L}
    \sum_{i=0}^{K-1}\mathbf{W}_{K,i}\be_i .
\]
The maps \(\mathbf{W}_{K,i}\) describe how the oracle error at
iteration \(i\) propagates to the final iterate \(\bx_K\).
\end{definition}

Note that while the General First-Order (GFO) representation of first-order methods provides an affine representation of iterates in terms of oracle outputs, iterates are \emph{not} affine in the oracle's error for general smooth functions. However, as for quadratic functions, the gradient map is linear in iterates, the affine error-response property holds for all first-order methods on quadratic objectives.
\begin{proposition}[Affine error response on quadratic functions]
\label{prop:affine-error-response}
Consider a fixed-coefficient first-order method whose updates are obtained
through fixed linear combinations of previous iterates, auxiliary sequences,
and inexact gradients.
Let
\(
    f(\bx)=\frac{1}{2}\bx^\top \mathbf Q \bx,
    \bo\preceq \mathbf Q \preceq L\mathbf I.
\)
Then the method satisfies the affine error-response property on \(f\).
\end{proposition}
\begin{proof}{Proof}
For the quadratic function above, the gradient map is linear
\(
    \grad f(\bx)=\mathbf Q\bx.
\)
Under the inexact oracle,
\(
    \tilde{\grad} f(\bx)=\mathbf Q\bx+\be.
\)
Since the method coefficients are fixed and the gradient map is linear, the
entire algorithmic recursion becomes a linear dynamical system driven by the
errors \(\be_0,\ldots,\be_{K-1}\). Subtracting the exact-gradient recursion from
the inexact-gradient recursion gives a linear recursion for the perturbations
\(
    \delta \bx_k:=\bx_k-\bx_k^{\rm ex}.
\)
Unrolling this linear recursion yields linear maps
\(\mathbf{W}_{K,0}(\mathbf Q),\ldots,\mathbf{W}_{K,K-1}(\mathbf Q)\) such that
\(
    \delta \bx_K
    = -\frac{1}{L}
    \sum_{i=0}^{K-1}\mathbf{W}_{K,i}(\mathbf Q)\be_i,
\)
which is the affine error-response property. \Halmos
\end{proof}

\begin{definition}[Curvature-dependent error-amplification quantity]
Suppose that the affine error-response property holds at iteration \(K\) with
linear maps \(\mathbf{W}_{K,0},\ldots,\mathbf{W}_{K,K-1}\). The corresponding
error-amplification quantity is defined by
\[
    \mathcal E_K
    := \sup_{\norm{\mathbf d_i}\leq 1}
    \left\|
        \sum_{i=0}^{K-1}\mathbf{W}_{K,i}\mathbf d_i
    \right\|.
\]

Consider the scalar quadratic family
\(
    f_\lambda(x)=\frac{\lambda}{2}x^2, 0\leq \lambda\leq L.
\)
When the error-response maps depend on the
curvature \(\lambda\), we write them as \(\mathbf{W}_{K,i}(\lambda)\) and, hence,
\(
    \mathcal E_K(\lambda)
    = \sup_{\norm{\mathbf d_i}\leq 1}
    \left\|
        \sum_{i=0}^{K-1}\mathbf{W}_{K,i}(\lambda)\mathbf d_i
    \right\|.
\)
In the one-dimensional case, where \(\mathbf{W}_{K,i}(\lambda)\) reduce to scalar
coefficients \(w_{K,i}(\lambda)\), this quantity becomes
\(
    \mathcal E_K(\lambda)
    = \sum_{i=0}^{K-1}\abs{w_{K,i}(\lambda)}.
\)
\end{definition}

\begin{theorem}[Curvature-dependent amplification for iGOGM on scalar quadratics]
\label{thm:iGOGM-curvature-dependent-amplification}
Consider the scalar quadratic function
\(
    f_\lambda(x)=\frac{\lambda}{2}x^2,
    0\leq \lambda\leq L,
\)
and define \(\rho:=\lambda/L\). Apply iGOGM to this problem with inexact gradients
\(
    \tilde{\grad} f_\lambda(x_k)=\lambda x_k+e_k=L\rho x_k+e_k,
    |e_k|\leq b,
\)
and let
\(
    \beta_{k+1}:=\alpha_{k+1}/A_{k+1}.
\)
Then iGOGM has the scalar affine error-response representation
\(
    x_K-x_K^{\rm ex}
    = -\frac{1}{L}
    \sum_{i=0}^{K-1}w_{K,i}(\rho)e_i,
\)
where, for each fixed \(i=0,\ldots,K-1\), the coefficients
\(w_{k,i}(\rho)\) and \(q_{k,i}(\rho)\) are initialized by
\(
    w_{i+1,i}(\rho)
    = 1-\beta_{i+1}+2\alpha_i\beta_{i+1},
    \text{and } q_{i+1,i}(\rho)
    = 2\alpha_i,
\)
and then propagated, for \(k=i+1,\ldots,K-1\), according to
\[
    \begin{bmatrix}
        w_{k+1,i}(\rho)\\[1mm]
        q_{k+1,i}(\rho)
    \end{bmatrix}
    = \mathbf M_k(\rho)
    \begin{bmatrix}
        w_{k,i}(\rho)\\[1mm]
        q_{k,i}(\rho)
    \end{bmatrix},
\ \text{where,} \ \ 
    \mathbf M_k(\rho)
    = \begin{bmatrix}
        (1-\beta_{k+1})(1-\rho)-2\alpha_k\beta_{k+1}\rho
        &
        \ \beta_{k+1}
        \\[1mm]
        -2\alpha_k\rho
        &
        \ 1
    \end{bmatrix}.
\]
Consequently, the curvature-dependent scalar error-amplification quantity is
\(
    \mathcal E_K(\rho)
    = \sum_{i=0}^{K-1}\abs{w_{K,i}(\rho)},
\)
where
\[
    w_{K,i}(\rho)
    = \begin{bmatrix}1 & 0\end{bmatrix}
    \mathbf M_{K-1}(\rho)\cdots \mathbf M_{i+1}(\rho)
    \begin{bmatrix}
        1-\beta_{i+1}+2\alpha_i\beta_{i+1}\\[1mm]
        2\alpha_i
    \end{bmatrix}.
\]
\end{theorem}
The proof can be found in \cref{appx:proof-curvature-dependent}

Below, we show that iGOGM (and its special form iOGM-a) amplifies the error quadratically \(K^2\), in zero and small curvature regimes. We note that this error amplification does not happen, e.g., by gradient descent (GD) method.
\begin{lemma}[Zero-curvature amplification for iGOGM]
\label{lem:zero-curvature-amplification-iGOGM}
For iGOGM on the scalar quadratic family in
\cref{thm:iGOGM-curvature-dependent-amplification}, when \(\rho=0\), the
coefficients satisfy
\(
    w_{K,i}(0)
    = 2\alpha_i
    + (1-2\alpha_i)\frac{A_i}{A_K},
    i=0,\ldots,K-1.
\)
Consequently,
\(
    \mathcal E_K(0)
    = \sum_{i=0}^{K-1}
    \left|
        2\alpha_i
        + (1-2\alpha_i)\frac{A_i}{A_K}
    \right|.
\)
\end{lemma}
The proof is in \cref{appx:proof-zero-iGOGM}.

\begin{lemma}[Zero-curvature amplification for iOGM-\(a\)]
\label{lem:zero-curvature-amplification-iOGMa}
Consider iOGM-\(a\), where
\(
    \alpha_k=(k+a)/a,\
    A_k=\sum_{i=0}^k\alpha_i=(k+1)(k+2a)/2a.
\)
Then
\(
    \mathcal E_{K,a}(0)=\Theta(K^2),
\)
i.e,, there exist constants \(c_a,C_a>0\), depending only on \(a\), such
that for all sufficiently large \(K\),
\(
    c_aK^2
    \leq \mathcal E_{K,a}(0)
    \leq C_aK^2.
\)
\end{lemma}

For the proof, see \cref{appx:zero-iOGMa}. The small-curvature assumption below says that the large amplification seen at \(\lambda=0\) persists for a small positive curvature \(\lambda=\mu L/K^2\).


\begin{lemma}[Small-curvature amplification for iOGM-\(a\)]
\label{lem:small-curvature-amplification-iOGMa}
Fix \(a>0\). There exist constants \(\mu_a>0\), \(c_a>0\), and
\(K_a\ge 1\), depending only on \(a\), such that for all \(K\ge K_a\)
and all \(0<\mu\le \mu_a\),
\[
    \mathcal E_{K,a}\left(\frac{\mu}{K^2}\right)
    =
    \sum_{i=0}^{K-1}
    \left|
        w_{K,i}\left(\frac{\mu}{K^2}\right)
    \right|
    \ge
    c_aK^2 .
\]
Equivalently, for \(\lambda=\mu L/K^2\), we have
\(\mathcal E_{K,a}(\lambda)\ge c_aK^2\).
\end{lemma}
Proof is in \cref{appx:proof-amplification-iOGMa}

\begin{theorem}[Curvature-dependent quadratic lower bound]
\label{thm:curvature-dependent-quadratic-lower-bound}
Let \(L>0\), \(b\geq 0\), \(K\geq 1\), and let \(d\geq 2\). Consider a
fixed-coefficient iGOGM-type method applied to the class of \(L\)-smooth
convex functions over \(\mR^d\) under the inexact-gradient oracle
\(\tilde{\grad} f(\bx)=\grad f(\bx)+\be\), where \(\norm{\be}\leq b\).
Let \(\mathcal E_K(\rho)\) denote the scalar curvature-dependent
error-amplification quantity from
\cref{thm:iGOGM-curvature-dependent-amplification}, with
\(\rho=\lambda/L\). Then
\[
    \sup_{f\in\mathcal F_L(\mR^d)}
    \sup_{\norm{\be_i}\leq b}
    \left\{
        f(\bx_K)-f^\star
    \right\}
    \geq
    \sup_{0\leq \lambda\leq L}
    \frac{\lambda b^2}{2L^2}\mathcal E_K(\lambda/L)^2 .
\]
In particular, for iOGM-\(a\), under the small-curvature condition of
\cref{lem:small-curvature-amplification-iOGMa}, we have
\[
    \sup_{f\in\mathcal F_L(\mR^d)}
    \sup_{\norm{\be_i}\leq b}
    \left\{
        f(\bx_K)-f^\star
    \right\}
    \geq
    \Omega\left(\frac{b^2}{L}K^2\right).
\]
\end{theorem}

\begin{proof}{Proof}
Fix \(\lambda\in[0,L]\) and consider the rank-one quadratic function
\[
    f_\lambda(\bx)
    =
    \frac{\lambda}{2}\langle \bx,\boldsymbol{\delta}_1\rangle^2,
    \qquad \bx\in\mR^d,
\]
where \(\boldsymbol{\delta}_1\) denotes the first coordinate vector. Then
\(\nabla^2 f_\lambda(\bx)=\lambda \boldsymbol{\delta}_1\boldsymbol{\delta}_1^\top\),
and hence \(f_\lambda\) is convex and \(L\)-smooth for every
\(0\leq\lambda\leq L\). Moreover, since \(d\geq 2\), \(f_\lambda\) is not
strongly convex whenever \(\lambda>0\). Also, \(f_\lambda^\star=0\) and
\(\argmin f_\lambda=\{\bx\in\mR^d:\langle \bx,\boldsymbol{\delta}_1\rangle=0\}\).

Initialize the method at \(\bx_0=\bo\). Under the exact-gradient oracle, the
trajectory remains at zero, and hence \(\bx_K^{\rm ex}=\bo\). Now restrict the
oracle errors to the first coordinate direction, namely
\(\be_i=e_i\boldsymbol{\delta}_1\) with \(|e_i|\leq b\). Since the initialization,
the gradient, and the oracle errors all lie in
\(\operatorname{span}\{\boldsymbol{\delta}_1\}\), all iterates remain in this
one-dimensional subspace. Therefore, the induced dynamics along
\(\boldsymbol{\delta}_1\) are exactly the scalar quadratic dynamics for
\(x\mapsto \lambda x^2/2\).

By the scalar affine error-response representation, the first coordinate of
\(\bx_K\) satisfies
\[
    \langle \bx_K,\boldsymbol{\delta}_1\rangle
    =
    -\frac{1}{L}
    \sum_{i=0}^{K-1}w_{K,i}(\lambda/L)e_i .
\]
Choose the adversarial scalar errors as
\(e_i=b\,\operatorname{sign}(w_{K,i}(\lambda/L))\), with an arbitrary sign
when \(w_{K,i}(\lambda/L)=0\). Then \(\norm{\be_i}=|e_i|\leq b\), and
\[
    \left|\langle \bx_K,\boldsymbol{\delta}_1\rangle\right|
    =
    \frac{b}{L}
    \sum_{i=0}^{K-1}\abs{w_{K,i}(\lambda/L)}
    =
    \frac{b}{L}\mathcal E_K(\lambda/L).
\]
Consequently,
\[
    f_\lambda(\bx_K)-f_\lambda^\star
    =
    \frac{\lambda}{2}\langle \bx_K,\boldsymbol{\delta}_1\rangle^2
    =
    \frac{\lambda b^2}{2L^2}\mathcal E_K(\lambda/L)^2.
\]
Since \(f_\lambda\in\mathcal F_L(\mR^d)\) and the above construction is valid
for every \(\lambda\in[0,L]\), taking the supremum over \(\lambda\) gives
\[
    \sup_{f\in\mathcal F_L(\mR^d)}
    \sup_{\norm{\be_i}\leq b}
    \left\{
        f(\bx_K)-f^\star
    \right\}
    \geq
    \sup_{0\leq \lambda\leq L}
    \frac{\lambda b^2}{2L^2}\mathcal E_K(\lambda/L)^2.
\]

Now specialize to iOGM-\(a\) and choose \(\lambda=\mu L/K^2\), where
\(0<\mu\leq \mu_a\). By
\cref{lem:small-curvature-amplification-iOGMa},
\[
    \mathcal E_{K,a}\left(\frac{\mu}{K^2}\right)\geq c_aK^2.
\]
Substituting this bound and \(\lambda=\mu L/K^2\) into the previous display
yields
\[
    f_\lambda(\bx_K)-f_\lambda^\star
    \geq
    \frac{\mu L}{K^2}\frac{b^2}{2L^2}c_a^2K^4
    =
    \frac{\mu c_a^2}{2}\frac{b^2}{L}K^2.
\]
Therefore,
\[
    \sup_{f\in\mathcal F_L(\mR^d)}
    \sup_{\norm{\be_i}\leq b}
    \left\{
        f(\bx_K)-f^\star
    \right\}
    \geq
    \Omega\left(\frac{b^2}{L}K^2\right).
\]
This completes the proof. \Halmos
\end{proof}

\section{Optimized Gradient Method with inexact gradient oracle}\label{sec:PEP-iGOGM}
In this section, we introduce the analytical tool used to derive our quantifiable convergence bounds, i.e., Performance Estimation Problem (PEP). Proposed by \citet{Drori2014PerformanceFirstOrder}, PEP is a technique that formulates an optimization problem to find the worst-case performance of an algorithm on a given class of functions. Inspired by the derivation of the Optimized Gradient Method (OGM), which relies on this tool, our analysis will first focus on iGOGM and then be extended to iGFGM.

\subsection{Performance Estimation Problems (PEP) overview}

To solve the convex and Lipschitz smooth problems with the general first-order method using an exact gradient oracle, the PEP is defined as
\begin{align*}
	\max_{\substack{d, f                                                                                                                                       \\
	\bx_*, \bx_0, \cdots, \bx_k \in \mR^d
	}} \quad & f(\bx_K) - f(\bx_*)                                                                            \\
	\st  \qquad    \quad                                      & f \in \cF_{0,L},  \quad \bx_* \in \argmin_\bx f(\bx)                                           \\
	                                                          & \norm{\bx_0 - \bx_*}^2 \leq R^2                                                                \\
	                                                          & \bx_k =\bx_0 - \sum_{i=0}^{k-1}\frac{1}{L}\theta_{k,i}\grad f(\bx_i)  \quad k  = 1,\cdots,K,
\end{align*}
where \(\cF_{0,L}\) denotes the class of merely convex functions with Lipschitz continuous gradients. Note that \(\bx_k =\bx_0 - \sum_{i=0}^{k-1}\frac{1}{L}\theta_{k,i}\grad f(\bx_i),\ k  = 1,\cdots,K\), is the general first-order algorithm (GFO) performs which covers a wide range of first-order methods, including GD, FGM, Polyak's Heavy Ball Method~\cite{polyak1964some}, and Optimized Gradient Method~\cite{Drori2014PerformanceFirstOrder}.

The above problem is an infinite-dimensional optimization as the variable \(f\) is a function. \citet{Drori2014PerformanceFirstOrder} proposed a notion of \(\cF_{\mu,L}\)-interpolation that only considers function values and gradients at discrete points \(\seq{\bx_k}\), which are the terms that play a role in the optimization problem.
\begin{definition}[\(\cF_{\mu,L}\)-interpolation (Definition 2 in \cite{Taylor2016SmoothStronglyConvex})]
	Let \(\cI\) be an index set, and consider the set of triples \(\cS = \{ (\bx_i,\bg_i,f_i) \}_{i\in \cI}\) where \(\bx_i, \bg_i \in \mR^d\) for all \(i\in \cI\). Set \(\cS\) is \(\cF_{\mu,L}\)-interpolable if and only if there exists a function \(f \in \cF_{\mu,L}(\mR^d)\) such that we have both \(\bg_i = \grad f(\bx_i)\) and \(f(\bx_i) = f_i\) for all \(i \in \cI\).
\end{definition}

\begin{theorem}[\(\cF_{\mu,L}\)-interpolable (Theorem 4 in \cite{Taylor2016SmoothStronglyConvex}) ]
	Set \(\{ (\bx_i, \bg_i, f_i) \}_{i \in \cI}\) is \(\cF_{\mu,L}\)-interpolable (\(0 \leq \mu \leq L \leq  \infty\)) if and only if the following set of conditions holds for every pair of indices \(i \in \cI\) and \(j \in \cI\):
	\begin{equation}
		f_i - f_j - \bg_j^\top (\bx_i - \bx_j) \geq \frac{1}{2(1 - \mu/L)}\left( \frac{1}{L} \| \bg_i - \bg_j \|^2 + \mu \| \bx_i - \bx_j \|^2 - \frac{2\mu}{L} (\bg_j - \bg_i)^\top (\bx_j - \bx_i) \right).
	\end{equation}
\end{theorem}

The corresponding discrete PEP model is
\begin{align*}
	\max_{\substack{d,                                                                                                                   \\ \cS \subset \mR^d\times \mR^d\times \mR}} & f_K - f_* \tag{PEP-Exact} \label{prob:PEP-exact}\\
	\st \quad & \cS = \seq{(\bx_i,\bg_i,f_i)}_{i \in \seq{*,0,1,\cdots,K}}                                                               \\
	          & \bg_* = 0, \quad \norm{\bx_0 - \bx_*}^2\leq R^2                                                                            \\
	          & f_i - f_j - \bg_j^\top(\bx_i - \bx_j) \geq \frac{1}{2L}\norm{\bg_i - \bg_j}^2 \quad \forall i,j \in \seq{*,0,1,\cdots,K} \\
	          & \bx_k =\bx_0 - \sum_{i=0}^{k-1}\frac{1}{L}\theta_{k,i}\bg_i  \quad k = 1,\cdots, K.
\end{align*}
The theoretical equivalence of the discrete PEP model and the worst-case performance of GFO over the given class of functions is proven in \cite{Taylor2017ExactWorstCase, Taylor2016SmoothStronglyConvex} with \textit{convex interpolation} definition that guarantees PEP to generate the \textit{tight} worst-case performance.

The PEP model can also be used to find algorithms that minimize the worst-case function. In \citet{Drori2014PerformanceFirstOrder}, numerical results indicate existence of a better algorithm than the well-known FGM. \citet{Kim2016OptimizedFirstOrder} follow this idea and provide an explicit form of the generated algorithm called the Optimized Gradient Method (OGM), which is proven to match the lower bound with a smaller \(\cO(1)\) constant~\cite{Drori2017ExactInformationBased}. \citet{Drori2019EfficientFirstOrder} further reveal the equivalence between OGM and the conjugate gradient method--see also~\cite{Kim2017ConvergenceAnalysisOptimized, Kim2018GeneralizingOptimizedGradient, Kim2020OptimizingEfficiencyFirst, Park2023Factor$2$Acceleration}.

PEP technique has been successfully applied to various optimization methods, including gradient descent \cite{Grimmer2023ProvablyFasterGradient}, algorithms with line search \cite{Klerk2016WorstCaseComplexity, Drori2019EfficientFirstOrder}, proximal gradient methods \cite{Kim2018AnotherLookFast}, proximal point algorithms \cite{Kim2021AcceleratedProximalPoint}, algorithms with inexact oracles \cite{DeKlerk2020WorstCaseConvergence, Barre2022PrincipledAnalysesDesign}, and many others \cite{Barre2020ComplexityGuaranteesPolyak, Ryu2019FindingForwardDouglasrachford, Drori2016OptimalVariantKelleys}. Besides the deterministic smooth regimes, PEP has also been incorporated for stochastic problems~\cite{Taylor2019StochasticFirstOrder} and for problems satisfying \emph{relative} smoothness \cite{VanScoy2018FastestKnownGlobally}.

Given an algorithm, if \(d\) is large enough, PEP can equivalently be written as a Semidefinite Programming (SDP) and solved efficiently, primarily through its dual. However, optimizing over both the PEP dual and the algorithmic variables generally yields a nonconvex problem. Hence, to find an algorithm, a \emph{relaxed} PEP formulation (through dropping some constraints and changes of variable) is generally solved, which may not provide an optimal algorithm. However, \citet{DasGupta2023BranchBoundPerformance} propose a spatial branch-and-bound method to solve the original nonconvex problem over the dual PEP and algorithmic variable and provide a numerical guide to design algorithms.

\begin{remark}
A closely related topic is the analysis based on Integral Quadratic Constraints (IQC), originally studied in control theory \cite{Megretski1997SystemAnalysisVia}. Similar to PEP, IQC aims to find worst-case functions through optimization, but it is not an exact formulation of the function class; hence, it can only provide an upper bound. This technique has been used to analyze first-order methods in \cite{Lessard2016AnalysisDesignOptimization, Hu2017DissipativityTheoryNesterovs, VanScoy2018FastestKnownGlobally} and gradient method with inexact oracles in \cite{Hu2021AnalysisBiasedStochastic, Cyrus2018RobustAcceleratedOptimization}.
\end{remark}

\subsection{PEP for optimization with inexact oracle}\label{sec:PEP-inexact}

For inexact gradient oracles, the GFO is generalized to inexact GFO (iGFO) as
\begin{equation}\label{eq:GFO}
	\bx_{k} = \bx_0 -\frac{1}{L}\sum_{i=0}^{k-1}\theta_{k,i}(\bg_i + \be_i),
\end{equation}
and the discrete PEP model is adjusted as
\begin{align*}
	\max_{\substack{d,                                                                                                                   \\ \cS \subset \mR^d\times \mR^d\times \mR, \\ \be_0, \cdots, \be_K \in \mR^d}} & f_K - f_* \tag{PEP}\label{prob:PEP}\\
	\st \quad & \cS = \seq{(\bx_i,\bg_i,f_i)}_{i \in \seq{*,0,1,\cdots,K}}                                                               \\
	          & \bg_* = 0, \quad \norm{\bx_0 - \bx_*}^2\leq R^2                                                                            \\
	          & f_i - f_j - \bg_j^\top(\bx_i - \bx_j) \geq \frac{1}{2L}\norm{\bg_i - \bg_j}^2 \quad \forall i,j \in \seq{*,0,1,\cdots,K} \\
	          & \bx_k =\bx_0 - \sum_{i=0}^{k-1}\frac{1}{L}\theta_{k,i}(\bg_i + \be_i) \quad k = 1,\cdots, K                              \\
	          & \norm{\be_k}^2 \leq b_k^2\quad k=0,\cdots K-1,
\end{align*}
which is the generalization of the exact model. If \(b_k \equiv 0\) then \( \be_k \equiv \bo\), iGFO reduces to GFO, and \eqref{prob:PEP} reduces to \eqref{prob:PEP-exact}.

This formulation can be further simplified by introducing a Gram matrix. Define
\begin{align}
\label{eq:bFbX}
	\bF & := \bmat{f_0 - f_* & f_1 - f_* & \cdots & f_K - f_*} \in \mR^{1\times (K+1)},                                                       \\
	\bX & := \bmat{ \bx_0    & \be_0     & \cdots & \be_{K-1}                          & \bg_0 & \cdots & \bg_K } \in \mR^{d \times (2K+2)},
\end{align}
and the Gram matrix \(\bG :=  \bX^\top \bX \in \mS^{2K+2}\) which is a symmetric positive semidefinite (PSD) matrix. Also, we define sparse vectors \(\vec{\bff}_k \in \mR^{K+1}\) and \(\vec{\bx}_k, \vec{\bg}_k, \vec{\be}_k \in \mR^{2K+2}\) to select corresponding columns in \(\bF\) or \(\bX\) to recover the needed element, which satisfy
\begin{align*}
	 & f_k - f_* = \bF\vec{\bff}_k, \quad \vec{\bg}_* = \mathbf{0}, \quad \bg_k = \bX \vec{\bg}_k, \quad \be_k = \bX\vec{\be}_k,                           \\
	 & \bx_0 = \bX \vec{\bx}_0, \quad \bx_k = \bX\vec{\bx}_k =  \bX \left(\vec{\bx}_{0} - \frac{1}{L}\sum_{i=0}^{k-1}\theta_{k,i}(\vec{\bg}_i + \vec{\be}_i)\right).
\end{align*}
\textbf{Note.} We note that the subscript \(k\) on these 0-1 vectors does \emph{not} mean their \(k\)-th elements are equal to 1. For instance, based on the definitions of \(\bF\) and \(\bX\) in \eqref{eq:bFbX}, \(\vec{\bff}_k\), \(\vec{\be}_k\), and \(\vec{\bg}_k\) are standard basis vectors with \(k+1\)-th, \(k+2\)-th, \(K+k+2\)-th elements equal to 1, respectively, and \(\vec{\bx}_0\) is a standard basis vector with its first element equal to 1. With a slight abuse of the notation, \(\vec{\bx}_k\) (which is not a basis vector) is the vector used to represent \(\bx_k\) by a linear combination of the columns of \(\bX\), and it is a function of \(\theta_{k,i}\).

Without loss of generality, we assume \(f_*=0\) and \(\bx_*=0\), hence
\(\vec{\bff}_* = \mathbf{0}, \vec{\bx}_* = \mathbf{0}\). With this new notation, the PEP problem is written as
\begin{align*}
	\max_{d,\bG\in \mS_+^{2K+2},\bF \in \mR^{1\times (K+1)}}  \quad & \bF\vec{\bff}_K  \tag{PEP-Gram} \label{prob:PEP-Gram}                                                                          \\
	\st \quad                                                       & \bF(\vec{\bff}_j - \vec{\bff}_i) + \Tr(\bG \bA^{ij}) \leq 0, \quad \forall i,j \in  \seq{*,0,1,\cdots,K} \\
	                                                                & \Tr(\bG\vec{\bx}_0 \vec{\bx}_0^\top) - R^2 \leq 0                                                        \\
	                                                                & \Tr(\bG \vec{\be}_i \vec{\be}_i^\top) - b_i^2 \leq 0, \quad i = 0, \cdots, K -1                          \\
	                                                                & \Rank(\bG)\leq d,
\end{align*}
where
\begin{equation*}
	  \bA^{ij} := \frac{1}{2}((\vec{\bx}_i - \vec{\bx}_j) \vec{\bg}_j^\top + \vec{\bg}_j(\vec{\bx}_i - \vec{\bx}_j)^\top ) + \frac{1}{2L}(\vec{\bg}_i - \vec{\bg}_j)(\vec{\bg}_i - \vec{\bg}_j)^\top.
\end{equation*}

When dealing with large-scale problems, i.e., when \(2K+2 \leq  d\), we can drop the rank constraint \(\Rank{(\bG)}\leq d\) without changing the optimal value (see Theorem~5 in \cite{Taylor2017ExactWorstCase}), and the equivalent problem will be a Semidefinite Programming problem (SDP) written as
\begin{align*}
	\max_{\bG\in \mS^{2K+2}_+,\bF \in \mR^{1\times (K+1)}}  \quad & \bF\vec{\bff}_K  \tag{SDP-PEP}  \label{prob:SDP-PEP}                                                                        \\
	\st \quad                                                     & \bF(\vec{\bff}_j - \vec{\bff}_i) + \Tr(\bG\bA^{ij}) \leq 0, \quad \forall i,j \in  \seq{*,0,1,\cdots,K} \\
	                                                              & \Tr(\bG\vec{\bx}_0 \vec{\bx}_0^\top) - R^2 \leq 0                                                       \\
	                                                              & \Tr(\bG \vec{\be}_i \vec{\be}_i^\top) - b_i^2 \leq 0, \quad i = 0, \cdots, K -1.
\end{align*}
The optimal solution of \eqref{prob:SDP-PEP} will provide an exact worst-case performance of a given algorithm for convex problems.

It is possible to find the optimal algorithm with parameter \(\th\) by solving the minimax problem
\begin{equation}\label{prob:minmax}
	\min_{\theta_{k,i}} \max_{\bG,\bF \in \cS(\text{SDP-PEP})} \bF \vec{\bff}_K,
\end{equation}
where \(\bG,\bF \in\cS(\text{SDP-PEP})\) refers to \(\bG\) and \(\bF\) satisfy the constraints of \eqref{prob:SDP-PEP}. Since \(\bA^{i,j}\) is indeed a function of \(\{ \theta_{k,i}\}\), this minimax problem has bilinear terms in the constraints of the inner problem. Writing the inner maximization by its dual minimization form results in a (generally nonconvex)  Quadratically Constrained Quadratic Program (QCQP). By wisely selecting the constraints, OGM's~\cite{Drori2014PerformanceFirstOrder} idea is to transform the hard QCQP into a linear SDP that can be solved efficiently. Solving this problem numerically inspires a theoretical solution and results in the first version of OGM \emph{when gradients are exact}.

Recall that for FGM, there are two sequences, the primary sequence \(\{ \by_k \}\) and the secondary one \(\{ \bx_k \}\). The OGM is optimized for minimizing \(f(\bx_k)-f_*\) while the commonly used measure is \(f(\by_K) - f_*\). Due to this fact, \citet{Kim2017ConvergenceAnalysisOptimized} propose a new measure \(f(\bx_K) -f_*- \frac{1}{2L}\norm{\grad f(\bx_K)}^2\) since for smooth function and with the OGM update rule, \(f(\by_{K+1}) = f(\bx_K - \frac{1}{L} \grad f(\bx_K)) \leq f(\bx_K) - \frac{1}{2L}\norm{\grad f(\bx_K)}^2\).

Based on the constraint selection and objective modification discussed above, the relaxed optimization problem is
\begin{align*}
	\max_{\bG\in \mS_+^{2K+2}, \bF \in \mR^{(1\times K+1)}}  \quad & \bF\vec{\bff}_K - \frac{1}{2L} \Tr( \bG\vec{\bg}_K\vec{\bg}_K^\top)  \tag{P}  \label{prob:P}                                      \\
	\st \quad                                                      & \bF(\vec{\bff}_{i+1} - \vec{\bff}_i) + \Tr(\bG\bA^{i,i+1}) \leq 0, \quad i= 0,\cdots K-1, & \qquad  (v_{i,i+1}) \\
	                                                               & \bF(\vec{\bff}_i-\vec{\bff}_*) + \Tr(\bG \bA^{*,i}) \leq 0, \quad i= 0,\cdots K, \qquad   & (v_{*,i})           \\
	                                                               & \Tr(\bG\vec{\bx}_0 \vec{\bx}_0^\top) - R^2 \leq 0,                                        & \qquad  (\tau)      \\
	                                                               & \Tr(\bG \vec{\be}_i \vec{\be}_i^\top) - b_i^2 \leq 0, \quad i = 0, \cdots, K -1.          & (u_i)
\end{align*}
One can solve this problem by solving its dual problem. To write the dual of (P), we define the dual variables as \(\bv, \bv_*, \tau \), and \(\bu\) as indicated in the parenthesis following each constraint. The Lagrangian function is
\begin{align*}
	  & \cL(\bG,\bF,\bv,\bv_*,\bu,\tau)                                                                                                                                                                                                                        \\
	= & \bF\vec{\bff}_K - \frac{1}{2L} \Tr(\bG \vec{\bg}_K\vec{\bg}_K^\top) - \sum_{i=0}^{K-1}v_{i,i+1}\left(\bF(\vec{\bff}_{i+1}- \vec{\bff}_i)+\Tr (\bG\bA^{i,i+1})\right) - \sum_{i=0}^{K}v_{*,i}\left(\bF(\vec{\bff}_i - \vec{\bff}_*)+\Tr(\bG\bA^{*,i})\right) \\
	  & - \tau \left(\Tr(\bG\vec{\bx}_0\vec{\bx}_0^\top ) - R^2 \right)  -  \sum_{i=0}^{K-1} u_i \left(\Tr(\bG\vec{\be}_i\vec{\be}_i^\top ) - b_i^2 \right)                                                                                                    \\
	= & \tau R^2 + \sum_{i=0}^{K-1} u_i b_i^2 + \bF \left(\vec{\bff}_K + \sum_{i=0}^{K-1}v_{i,i+1}(\vec{\bff}_i - \vec{\bff}_{i+1}) + \sum_{i=0}^K v_{*,i}(\vec{\bff}_* - \vec{\bff}_i)\right)                                                                          \\
	  & - \Tr \left(\bG \left(\sum_{i=0}^{K-1} v_{i,i+1}\bA^{i,i+1} + \sum_{i=0}^K v_{*,i}\bA^{*,i} + \tau \vec{\bx}_0\vec{\bx}_0^\top + \sum_{i=0}^{K-1} u_i \vec{\be}_i\vec{\be}_i^\top + \frac{1}{2L}\vec{\bg}_K\vec{\bg}_K^\top \right)\right).
\end{align*}
For the problem \(\max_{\bG\in \mS_+,\bF} \cL(\bG,\bF,\bv,\bv_*,\bu,\tau)\) to have a bounded solution, we can write the corresponding dual problem as
\begin{align*}
	\min_{\tau,\bv,\bv_*,\bu \geq 0} \quad & \tau R^2 + \sum_{i=0}^{K-1}u_i b_i^2 \tag{D}   \label{prob:D}                                                                                                                                                             \\
	\st \quad                              & \vec{\bff}_K + \sum_{i=0}^{K-1}v_{i,i+1}(\vec{\bff}_i - \vec{\bff}_{i+1}) + \sum_{i=0}^K v_{*,i} (\vec{\bff}_* - \vec{\bff}_i) = 0                                                                          \\
	                                       & \sum_{i=0}^{K-1}v_{i,i+1} \bA^{i,i+1} + \sum_{i=0}^K v_{*,i}\bA^{*,i} + \tau \vec{\bx}_0 \vec{\bx}_0^\top + \sum_{i=0}^{K-1} u_i \vec{\be}_i \vec{\be}_i^\top  + \frac{1}{2L}\vec{\bg}_K\vec{\bg}_K^\top  \succeq 0. \numberthis \label{eq:PSD}
\end{align*}

Following the same scaling argument in the exact-gradient PEP framework of \citet{Taylor2016SmoothStronglyConvex}, one can normalize the smoothness constant and the initial-distance bound by rescaling the function and the domain. In the inexact-gradient scenario, the same scaling also normalizes the absolute error vector by the scale \(LR\).
Thus, the relevant error quantity is the dimensionless vector \(\bb/(LR)\), while the PEP value scales by \(LR^2\). The following lemma formalizes this scaling relation.

\begin{lemma}[Scaling of the PEP]
\label{lem:scaling-inexact-pep}
For any fixed \(K\), \(L>0\), \(R>0\), the algorithm parameters
\(\theta_{k,i}\), and absolute error vector
\(\bb=(b_0,\cdots,b_{K-1})^\top\in\mR_+^K\), define the normalized error vector
\[
    \boldsymbol{\beta}
    = \frac{\bb}{LR}.
\]
In this lemma, \(Q_K(L,R,\bb)\) denotes the optimal value of a model chosen from \eqref{prob:SDP-PEP}, \eqref{prob:P}, and \eqref{prob:D}.
Then
\begin{equation}\label{eq:pep-scaling-compact}
    Q_K(L,R,\bb)
    = LR^2
    Q_K(1,1,\boldsymbol{\beta}).
\end{equation}

Moreover, for the dual model, let
\(Q_K^{\mathrm{fix}}(L,R,\bb;\bar\tau)\) denote the optimal value
of \eqref{prob:D} after fixing \(\tau=L\bar\tau\). Then
\begin{equation}\label{eq:fixed-tau-scaling-compact}
    Q_K^{\mathrm{fix}}(L,R,\bb;\bar\tau)
    = LR^2
    Q_K^{\mathrm{fix}}(1,1,\boldsymbol{\beta};\bar\tau).
\end{equation}
\end{lemma}

\begin{proof}{Proof}
Fix one model from \eqref{prob:SDP-PEP}, \eqref{prob:P}, and
\eqref{prob:D}. Consider any feasible instance with parameters
\((L,R,\bb)\). Define
\[
    \bar{\bx}_i :=\frac{\bx_i-\bx_*}{R},
    \qquad \bar{\bg}_i :=\frac{\bg_i}{LR},
    \qquad \bar{\be}_i :=\frac{\be_i}{LR},
    \quad \bar f(\bar{\bx}) := \frac{f(\bx_*+R\bar{\bx})-f_*}{LR^2}.
\]
Then \(f\in\cF_{0,L}\) is equivalent to \(\bar f\in\cF_{0,1}\). The initial
condition \(\norm{\bx_0-\bx_*}\leq R\) is equivalent to
\(\norm{\bar{\bx}_0}\leq 1\). Similarly, the error condition
\(\norm{\be_i}\leq b_i\) is equivalent to
\(\norm{\bar{\be}_i}\leq \beta_i\), where
\(\beta_i=b_i/(LR)\).
The method recursion is also preserved under the same change of variables. The
equality
\[
    \bx_k
    = \bx_0-\frac1L\sum_{i=0}^{k-1}\theta_{k,i}(\bg_i+\be_i)
\]
holds exactly when
\[
    \bar{\bx}_k
    = \bar{\bx}_0-\sum_{i=0}^{k-1}\theta_{k,i}
    (\bar{\bg}_i+\bar{\be}_i)
\]
holds. Hence the feasible set with parameters \((L,R,\bb)\) is in one-to-one
correspondence with the feasible set with parameters
\((1,1,\boldsymbol{\beta})\).
The corresponding performance measure is multiplied by \(LR^2\). For
\eqref{prob:SDP-PEP}, we have
\[
    f_K-f_* = LR^2(\bar f_K-\bar f_*).
\]
For the modified objective in \eqref{prob:P}, we have
\[
    f_K-f_*-\frac{1}{2L}\norm{\bg_K}^2
    = LR^2
    \left(
    \bar f_K-\bar f_*-\frac12\norm{\bar{\bg}_K}^2
    \right).
\]
Therefore, taking the optimal values gives
\[
    Q_K(L,R,\bb)
    = LR^2Q_K(1,1,\boldsymbol{\beta})
\]
for \eqref{prob:SDP-PEP} and \eqref{prob:P}.

It remains to consider \eqref{prob:D}. For any feasible solution
\((\tau,\bu,\bv,\bv_*)\) with parameters \((L,R,\bb)\), define
\[
    \bar\tau=\frac{\tau}{L},
    \qquad \bar u_i=Lu_i,\quad i=0,\cdots,K-1,
\]
and keep \(v_{i,i+1}\) and \(v_{*,i}\) unchanged. This change of variables is
invertible. Under the same normalization as above, the equality and PSD
constraints in \eqref{prob:D} become exactly the corresponding constraints of
the normalized dual problem with parameters \((1,1,\boldsymbol{\beta})\).
Moreover,
\[
    \tau R^2+\sum_{i=0}^{K-1}u_i b_i^2
    = LR^2
    \left(
    \bar\tau+\sum_{i=0}^{K-1}\bar u_i\beta_i^2
    \right).
\]
Taking the optimal values gives the same scaling relation for \eqref{prob:D}.
If \(\tau=L\bar\tau\) is fixed in the original dual model, then the normalized
model fixes the corresponding multiplier at \(\bar\tau\), and the same
argument gives \eqref{eq:fixed-tau-scaling-compact}. \Halmos
\end{proof}

\citet{Kim2016OptimizedFirstOrder} analytically solved \eqref{prob:D} nested inside the minimization over the algorithm parameters \(\th\), \emph{in the absence of gradient inexactness, i.e., when \(\sum_{i=0}^{K-1}u_i b_i^2=0\) and \(\sum_{i=0}^{K-1} u_i \vec{\be}_i \vec{\be}_i^\top=0\),} and derived OGM with the following solution,
\begin{equation}\label{eq:OGM-solution}
	\tau = \frac{L}{4A_K}, \quad v_{i,i+1}=\frac{A_i}{A_K},  \quad  v_{*,0}=v_{0,1}, \quad v_{*,i} =v_{i,i+1}- v_{i-1,i}, \quad v_{*,K} = 1 - v_{K-1,K},
\end{equation}
and
\begin{equation}\label{eq:OGM-algorithm}
	\theta_{k,i}= \begin{cases}
		              \frac{2\alpha_i\alpha_k + A_{k-1}}{A_k}  	\quad                           & i=k-1            \\
		              \frac{2\alpha_k \alpha_i}{A_k} + \frac{A_{k-1}}{A_k}\theta_{k-1,i}  \quad & 0 \leq i\leq k-2\end{cases},
\end{equation}
with \(A_k = \sum_{i=0}^k \alpha_i\), \(A_0 = 1, \alpha_{-1}=A_{-1} = 0\).
By \cref{lem:scaling-inexact-pep}, this choice of \(\tau\) is
scale-consistent, since it corresponds to fixing
\(\bar\tau=1/(4A_K)\) in the normalized problem and the fixed-\(\tau\) dual optimal value scales by \(LR^2\).
Further, they showed that this solution is also a feasible solution to GOGM \cite{Kim2018GeneralizingOptimizedGradient} with  \(\alpha_k^2 \leq A_k\). We will follow this process, use the feasible solution above, and derive the value of \(\bu\) for GOGM with the \emph{inexact gradients}.

\begin{remark}\label{rmk:no-recursive-algorithm}
Based on the numerical experiments to solve over both the algorithm variables and the variables of the relaxed dual PEP formulations \eqref{prob:D} presented in \cref{sec:numerical-optimized-algorithm}, it is not straightforward to obtain a recursive algorithm in the inexact case. We leave designing such a recursive algorithm to future research. 
\end{remark}

It is easy to verify that the equality constraint in \eqref{prob:D} holds with this solution. We still need to find a feasible solution for \(\bu\) to satisfy the PSD constraint in \eqref{eq:PSD}. After substitution by \eqref{eq:OGM-solution} and \eqref{eq:OGM-algorithm}, the left-hand-side matrix of the PSD constraint (denoted as \(\bM\)) in \eqref{eq:PSD} is
\begin{equation}\label{eq:matrix-M}
	\bM = \begin{pmatrix*}[l]
		      \tau              & [\mathbf{0}]_{1\times K}    & [\bp]_{1 \times (K+1)}     \\
		      [\mathbf{0}]_{K \times 1}   & [\bU]_{K \times K}          & [\bB]_{K \times (K+1)}     \\
		      [\bp^\top]_{(K+1) \times 1} & [\bB^\top]_{(K+1) \times K} & [\bC]_{(K+1) \times (K+1)}
	      \end{pmatrix*},
\end{equation}
where \(\bU =\diag( u_0 \cdots u_{K-1} )\), \(\bp_{i}= - \frac{\alpha_{i-1}}{2A_K}\),
\begin{equation}\label{eq:B-definition}
	\bB_{i,j}=\begin{cases}
		          \frac{2\alpha_{i-1}\alpha_i +A_{i-1}}{2LA_K}\quad & j=i+1               \\
		          \frac{\alpha_{i-1}\alpha_{j-1}}{LA_K} \quad       & i+2\leq j \leq K +1
	          \end{cases},
	\quad \text{and} \quad
	\bC_{i,j}= \begin{cases}
		           \frac{A_{i-1}}{LA_K} \quad                  & i=j      \\
		           \frac{\alpha_{i-1}\alpha_{j-1}}{LA_K} \quad & i \neq j \\
	           \end{cases}.
\end{equation}

For \(K=3\), one example of \(\bM\) is listed below
\begin{equation*}
	\bM = \left(\begin{array}{c|ccc|cccc}
			            \frac{L}{4 A_3}              & 0                                  & 0                                                  & 0                                                  & - \frac{1}{2 A_3}          & - \frac{{\alpha}_{1}}{2 A_3}            & - \frac{{\alpha}_{2}}{2 A_3}                       & - \frac{{\alpha}_{3}}{2 A_3}            \\[2pt]
			            \hline
			            0                            & {u}_{0}                            & 0                                                  & 0                                                  & 0                          & \frac{1 + 2 {\alpha}_{1}}{2 L A_3}      & \frac{{\alpha}_{2}}{L A_3}                         & \frac{{\alpha}_{3}}{L A_3}              \\
			            0                            & 0                                  & {u}_{1}                                            & 0                                                  & 0                          & 0                                       & \frac{ 2 {\alpha}_{1} {\alpha}_{2} + A_1}{2 L A_3} & \frac{{\alpha}_{1} {\alpha}_{3}}{L A_3} \\0 & 0 & 0 & {u}_{2} & 0 & 0 & 0 & \frac{2 {\alpha}_{2} {\alpha}_{3} + A_2}{2 L A_3}\\[2pt]
			            \hline
			            - \frac{1}{2 A_3}            & 0                                  & 0                                                  & 0                                                  & \frac{1}{L A_3}            & \frac{{\alpha}_{1}}{L A_3}              & \frac{{\alpha}_{2}}{L A_3}                         & \frac{{\alpha}_{3}}{L A_3}              \\
			            - \frac{{\alpha}_{1}}{2 A_3} & \frac{1 + 2 {\alpha}_{1}}{2 L A_3} & 0                                                  & 0                                                  & \frac{{\alpha}_{1}}{L A_3} & \frac{A_1}{L A_3}                       & \frac{{\alpha}_{1} {\alpha}_{2}}{L A_3}            & \frac{{\alpha}_{1} {\alpha}_{3}}{L A_3} \\
			            - \frac{{\alpha}_{2}}{2 A_3} & \frac{{\alpha}_{2}}{L A_3}         & \frac{ 2 {\alpha}_{1} {\alpha}_{2} + A_1}{2 L A_3} & 0                                                  & \frac{{\alpha}_{2}}{L A_3} & \frac{{\alpha}_{1} {\alpha}_{2}}{L A_3} & \frac{A_2}{L A_3}                                  & \frac{{\alpha}_{2} {\alpha}_{3}}{L A_3} \\
			            - \frac{{\alpha}_{3}}{2 A_3} & \frac{{\alpha}_{3}}{L A_3}         & \frac{{\alpha}_{1} {\alpha}_{3}}{L A_3}            & \frac{ 2 {\alpha}_{2} {\alpha}_{3} + A_2}{2 L A_3} & \frac{{\alpha}_{3}}{L A_3} & \frac{{\alpha}_{1} {\alpha}_{3}}{L A_3} & \frac{{\alpha}_{2} {\alpha}_{3}}{L A_3}            & \frac{1}{L}
		            \end{array} \right)
\end{equation*}

From the structure of \(\bM\), we can observe that all the rows of \(\bB\) have at least one positive element, hence to make \(\bM \succeq 0\) we must have \(\bu > \bo\). Following the decomposition of \cite{Kim2017ConvergenceAnalysisOptimized}, i.e., \(\bM = \frac{1}{\tau} \bw_\tau \bw_\tau^\top + \bR\) with \(\bw_\tau\) being the first column of \(\bM\), with the condition \(A_k = \sum_{i=0}^{k}\alpha_i\), the residual matrix \(\bR\) has the form
\begin{equation*}
	\bR = \begin{pmatrix*}[l]
		      0                           & [\mathbf{0}]_{1\times K}  & [\mathbf{0}]_{1 \times K+1}                         \\
		      [\mathbf{0}]_{K \times 1}   & [\bU]_{K \times K}        & [\bB]_{K \times K+1}                                \\
		      [\mathbf{0}]_{K+1 \times 1} & [\bB^\top]_{K+1 \times K} & [\bC - \frac{1}{\tau}\bp^\top \bp ]_{K+1\times K+1}
	      \end{pmatrix*},
\end{equation*}
where \(\bC - \frac{1}{\tau}\bp^\top \bp = \diag(0,\seq{(A_{i-1}-\alpha_{i-1}^2)/(LA_K)}_{i=2}^{K+1})\), and equals to \(\bo\) with \(\seq{\alpha_k}\) defined in OGM.
\(\bR\) in the example of \(K=3\) is equal to
\begin{equation*}
	\bR = \left(\begin{array}{c|ccc|cccc}
			            0 & 0                                  & 0                                                  & 0                                                  & 0 & 0                                  & 0                                                  & 0                                       \\[2pt]
			            \hline
			            0 & {u}_{0}                            & 0                                                  & 0                                                  & 0 & \frac{1 + 2 {\alpha}_{1}}{2 L A_3} & \frac{{\alpha}_{2}}{L A_3}                         & \frac{{\alpha}_{3}}{L A_3}              \\
			            0 & 0                                  & {u}_{1}                                            & 0                                                  & 0 & 0                                  & \frac{ 2 {\alpha}_{1} {\alpha}_{2} + A_1}{2 L A_3} & \frac{{\alpha}_{1} {\alpha}_{3}}{L A_3} \\0 & 0 & 0 & {u}_{2} & 0 & 0 & 0 & \frac{2 {\alpha}_{2} {\alpha}_{3} + A_2}{2 L A_3}\\[2pt]
			            \hline
			            0 & 0                                  & 0                                                  & 0                                                  & 0 & 0                                  & 0                                                  & 0                                       \\
			            0 & \frac{1 + 2 {\alpha}_{1}}{2 L A_3} & 0                                                  & 0                                                  & 0 & \frac{A_1 - \alpha_1^2}{L A_3}     & 0                                                  & 0                                       \\
			            0 & \frac{{\alpha}_{2}}{L A_3}         & \frac{ 2 {\alpha}_{1} {\alpha}_{2} + A_1}{2 L A_3} & 0                                                  & 0 & 0                                  & \frac{A_2-\alpha_2^2}{L A_3}                       & 0                                       \\
			            0 & \frac{{\alpha}_{3}}{L A_3}         & \frac{{\alpha}_{1} {\alpha}_{3}}{L A_3}            & \frac{ 2 {\alpha}_{2} {\alpha}_{3} + A_2}{2 L A_3} & 0 & 0                                  & 0                                                  & \frac{A_3 - \alpha_3^2}{LA_3}
		            \end{array} \right).
\end{equation*}
Recalling that \(\bR^{\backslash[1]}\) denotes the matrix resulting from dropping the first column and row of \(\bR\), this submatrix is indeed the Schur complement of the block \([\tau]\) of the matrix \(\bM\). Since \(\tau >0\) (as \(\tau = \frac{L}{4A_K}\)), we know that \(\bM \succeq 0\) if and only if \(\bR^{\backslash[1]} \succeq 0\). In the following lemma, we will show that \(\bu = \infty\) is the unique feasible solution when \(A_i = \alpha_i^2, \forall i = 1,\cdots,K\).
\begin{lemma}\label{lem:u-infty}
	With the given solution in \eqref{eq:OGM-solution} and \eqref{eq:OGM-algorithm}, if \(A_i - \alpha_i^2 = 0, \forall i = 1,\cdots,K\), then \(\bu = \infty\) is the unique solution that makes \(\bR^{\backslash[1]}\succeq 0\). When \(A_i - \alpha_i^2>0\), \(\bu = \infty\) is still a feasible solution but not unique.
\end{lemma}
\begin{proof}{Proof}
	First, we know that all \(u_i\) are positive numbers. Define \(\vec{\bv}_k\) be a unit vector with a 1 at the \(k\)-th element. If \(\bR^{\backslash[1]}\succeq 0\), then for any \(c\in \mR\) and defining \(\bv := \vec{\bv}_i+c\vec{\bv}_j\), the condition
	\begin{equation*}
		0 \leq  \bv^\top \bR^{\backslash[1]} \bv =  c^2 \bR^{\backslash[1]}_{j,j} +  \bR^{\backslash[1]}_{i,i} + 2c\bR^{\backslash[1]}_{i,j}
	\end{equation*}
	holds. To show that, there are four different scenarios:
        \begin{equation*}
        \begin{array}{l@{\qquad \qquad}l}
            1.\ \bR^{\backslash[1]}_{i,i} = 0, \bR^{\backslash[1]}_{j,j} = 0, & 2.\ \bR^{\backslash[1]}_{i,i} = u_k, \bR^{\backslash[1]}_{j,j} = u_l, \\
            3.\ \bR^{\backslash[1]}_{i,i} = u_k, \bR^{\backslash[1]}_{j,j} = 0, & 4.\ \bR^{\backslash[1]}_{i,i} = 0, \bR^{\backslash[1]}_{j,j} = u_l.
        \end{array}
        \end{equation*}
For the first and second scenarios, it is obvious that \(\bR^{\backslash[1]}_{i,j}=0\), with diagonal elements being non-negative, the condition holds.
For the third scenario, \(\bR^{\backslash[1]}_{i,j} \geq 0\). Consider the case \(\bR^{\backslash[1]}_{i,j} > 0\), since \(c\) can be any negative number, the condition  	\(\bv^\top \bR^{\backslash[1]} \bv = u_k + 2c \bR^{\backslash[1]}_{i,j}\geq 0\)  holds only when \(u_k = \infty\).
	For the fourth scenario, the situation is similar. \(\bv^\top \bR^{\backslash[1]} \bv =c^2 u_l + 2c \bR^{\backslash[1]}_{i,j}\geq 0\) holds only with \(u_l = \infty\). \Halmos \end{proof}

The \textit{exact} OGM with feasible solution \eqref{eq:OGM-solution} and \(\bu = \infty\) provides the convergence rate of
\begin{equation*}
	f(\by_{K+1}) - f_* \leq \frac{LR^2}{4A_K} + \sum_{i=0}^{K-1} u_i b_i^2 = \frac{LR^2}{4A_K}.
\end{equation*}

The unique feasible solution \(\bu = \infty\) has no effect on the final convergence rate since \(\
	b_i \equiv 0 \). However, with the inexact gradient oracle, such a solution will ruin the convergence rate as the accumulated error approaches \(\infty\). Hence, we need to change either the feasible solution given in \eqref{eq:OGM-solution} or the algorithm's structure defined in \eqref{eq:OGM-algorithm}. The details will be discussed in the next section.

\section{Generalized OGM with inexact gradient oracle}\label{sec:iGOGM}

The goal of this section is to construct an analytical feasible solution of \eqref{prob:D} for iGOGM. By weak duality, any feasible solution of \eqref{prob:D} provides an upper bound on the worst-case convergence rate. Rather than solving the full dual problem over all multipliers, we restrict attention to a tractable family motivated by the exact-oracle OGM solution.

Following the exact-oracle OGM construction, we fix the multipliers as
\[
    v_{i,i+1}=\frac{A_i}{A_K},\quad i=0,\ldots,K-1.
\]
The equality constraint in \eqref{prob:D} then uniquely determines the corresponding multipliers \(\seq{v_{*,i}}\). Hence, after fixing \(\bv\), the remaining variables in \eqref{prob:D} are \(\tau\) and \(\bu\). A natural first step is to also set \(\tau=L/(4A_K)\), as in the exact-oracle OGM solution. With this choice, the PSD constraint can be reduced, via a Schur complement argument, to a condition involving only \(\bu\). This reduction leads to an explicit optimal choice of \(\bu\) for the accumulated-error term. Interestingly, the same pair \((\tau,\bu)\) is optimal for the restricted problem of \eqref{prob:D} in which only \(\bv\) is fixed and \(\tau\) and \(\bu\) are optimized jointly, as shown in \cref{lem:optimal-u-fixed-tv}.

As we claimed in \cref{lem:u-infty}, OGM (\(A_k = \alpha_k^2\)) with the slected multiplier in \eqref{eq:OGM-solution} leads to the degenerate feasible choice \(\bu = \infty\). To make \(\bu\) to be bounded, we have to guarantee that the stepsize condition holds with strict inequality, i.e., \(A_k - \alpha_k^2 >0\).

Under this strict condition, when \(\tau=L/(4A_K)\), the block
\(\bC-\frac1\tau\bp^\top\bp\) has a zero first row and column, while its remaining principal block is positive definite. Specifically, after deleting this zero row and column, the remaining block can be defined as
\(\bD := \diag\left(\frac{A_1-\alpha_1^2}{LA_K},\ldots,\frac{A_K-\alpha_K^2}{LA_K}\right)\succ0.\)
Since the corresponding first column of \(\bB\) defined in \eqref{eq:B-definition} is also zero, the PSD constraint can be reduced to the Schur complement condition
\( \bU-\bB^{\backslash[1]}\bD^{-1}\bB^{\backslash[1]\top}\succeq 0.\)
Define 
\begin{equation}\label{eq:H-definition}
\bH := \bB^{\backslash[1]}\bD^{-1}\bB^{\backslash[1]\top}. 
\end{equation}
 With this reduction, the problem reduces to finding \(\bu\) through solving the problem  \(\min_\bu\sum_{i=0}^{K-1}u_i b_i^2\) subject to \(\bU-\bH\succeq0\). The next lemma identifies the optimal \(\bu\) and further shows that the same \(\tau=L/(4A_K)\) is optimal in the restricted dual problem where only \(\bv\) is fixed.

\begin{lemma}
\label{lem:optimal-u-fixed-tv}
Assume \(A_k-\alpha_k^2>0\), \(k=1,\cdots,K\), and \(b_i>0\), \(i=0,\cdots,K-1\). Consider the restricted-form of the problem \eqref{prob:D} after fixing \( v_{i,i+1}=\frac{A_i}{A_K},\ i=0,\ldots,K-1, \) while optimizing over \(\tau\geq 0\) and \(\bu\geq \bo\). The equality constraint in \eqref{prob:D} then uniquely determines
\[ 
    v_{*,0}=v_{0,1},\qquad
    v_{*,i}=v_{i,i+1}-v_{i-1,i},\quad i=1,\ldots,K-1,
    \qquad  v_{*,K}=1-v_{K-1,K}.
\]
Then, the optimal solution is
\begin{equation}
    \tau^*=\frac{L}{4A_K}, \qquad
    u_i^* = \hat{u}_i := \frac{(\bH\bb)_i}{b_i}, \quad i=0,\cdots,K-1,
    \label{eq:optimal-u}
\end{equation}
where \(\bb=(b_0,\cdots,b_{K-1})^\top\), which is also a feasible solution of \eqref{prob:D}. Moreover, the optimal objective value of this restricted problem is
\[
\frac{LR^2}{4A_K}+\bb^\top\bH\bb .
\]
\end{lemma}

\begin{proof}{Proof}
After fixing \(v_{i,i+1}=A_i/A_K\), the equality constraint in \eqref{prob:D} fixes the corresponding values of \(\bv_*\) as stated above. Therefore, the remaining variables are \(\tau\) and \(\bu\). With these values of \(\bv\) and \(\bv_*\), the left-hand-side matrix of the PSD constraint \eqref{eq:PSD} has the block form
\[
	\bM(\tau,\bu) = \begin{pmatrix*}[l]
		      \tau              & [\mathbf{0}]_{1\times K}    & [\bp]_{1 \times (K+1)}     \\
		      [\mathbf{0}]_{K \times 1}   & [\bU]_{K \times K}          & [\bB]_{K \times (K+1)}     \\
		      [\bp^\top]_{(K+1) \times 1} & [\bB^\top]_{(K+1) \times K} & [\bC]_{(K+1) \times (K+1)}
	      \end{pmatrix*},
\]
where \(\bU=\diag(u_0,\ldots,u_{K-1})\), and \(\bp\), \(\bB\), and \(\bC\) are as defined above.

First, note that any feasible solution must have \(\tau>0\). Indeed, if \(\tau=0\), then the first diagonal entry of the PSD matrix is zero, which would force the whole first row and column to be zero. This is impossible since the first entry of \(\bp\) is \(-1/(2A_K)\). Hence, we may take the Schur complement with respect to \(\tau\). Thus \(\bM(\tau,\bu)\succeq 0\) is equivalent to
\[
    \begin{pmatrix}
        \bU      & \bB                           \\
        \bB^\top & \bC-\frac{1}{\tau}\bp^\top\bp
    \end{pmatrix}
    \succeq 0.
\]
Let \( \boldsymbol{\alpha}:= (1,\alpha_1,\ldots,\alpha_K)^\top, \   \bar{\boldsymbol{\alpha}}:= (\alpha_1,\ldots,\alpha_K)^\top. \)
With \(A_0=\alpha_0=1\), we can write
\[
    \bC-\frac{1}{\tau}\bp^\top\bp
    = \begin{pmatrix}
        0   & \bo^\top \\
        \bo & \bD
    \end{pmatrix}
    + \eta(\tau)\boldsymbol{\alpha}\boldsymbol{\alpha}^\top,
\]
where
\[
    \eta(\tau) := \frac{1}{LA_K}-\frac{1}{4A_K^2\tau} = \frac{4A_K\tau-L}{4LA_K^2\tau}.
\]
Since the matrix above is a principal submatrix of the Schur complement, feasibility requires \( \eta(\tau)\geq 0, \)
and therefore \( \tau\geq \frac{L}{4A_K}. \)
It remains to characterize the feasible set for \(\bu\) for any \(\tau\geq L/(4A_K)\). Since the first column of \(\bB\) is zero, write
\( \bB=\begin{pmatrix} \bo & \bB^{\backslash[1]} \end{pmatrix}. \)
If \(\tau>L/(4A_K)\), then \(\eta(\tau)>0\). Reordering the blocks in the Schur complement gives the equivalent condition
\[
    \begin{pmatrix}
        \bU                     & \bo                                 & \bB^{\backslash[1]}                      \\
        \bo^\top                & \eta(\tau)                          & \eta(\tau)\bar{\boldsymbol{\alpha}}^\top \\
        \bB^{\backslash[1]\top} & \eta(\tau)\bar{\boldsymbol{\alpha}} &
        \bD+\eta(\tau)\bar{\boldsymbol{\alpha}}\bar{\boldsymbol{\alpha}}^\top
    \end{pmatrix}
    \succeq 0.
\]
Taking the Schur complement with respect to the scalar block \(\eta(\tau)\), this condition is equivalent to
\[
    \begin{pmatrix}
        \bU                     & \bB^{\backslash[1]} \\
        \bB^{\backslash[1]\top} & \bD
    \end{pmatrix}
    \succeq 0.
\]
When \(\tau=L/(4A_K)\), we have \(\eta(\tau)=0\), and the same condition follows by deleting the zero row and column. Since \(\bD\succ 0\), the last condition is equivalent to
\( \bU-\bB^{\backslash[1]}\bD^{-1}\bB^{\backslash[1]\top}\succeq 0, \)
or, equivalently, \( \bU-\bH\succeq 0. \)
The restricted problem is thus equivalent to
\[
    \min_{\tau\geq L/(4A_K),\,\bu\geq \bo}
    \quad \tau R^2+\sum_{i=0}^{K-1}u_i b_i^2
    \qquad \st
    \qquad \bU-\bH\succeq 0.
\]
Since \(R>0\), the objective is strictly increasing in \(\tau\). Hence, \( \tau^*=\frac{L}{4A_K}. \) It remains only to solve the \(\bu\)-subproblem
\[
    \min_{\bu\geq \bo}
    \quad \sum_{i=0}^{K-1}u_i b_i^2
    \qquad \st
    \qquad \bU-\bH\succeq 0.
\]
The Lagrangian of this problem is \(\cL(\bu,\bX)=\sum_{i=0}^{K-1}b_i^2u_i-\Tr\big(\bX(\bU-\bH)\big), \ \bX \succeq 0.\)
The KKT conditions are
\[
    \bU - \bH \succeq \bo,\qquad
    \bX\succeq \bo,\qquad
    \grad_{\bu}\mathcal{L}(\bu,\bX)=\bo,\qquad
    \Tr\big(\bX(\bU-\bH)\big)=0.
\]
We verify these conditions by choosing \(\bX^*=\bb\bb^\top\) and \(\hat{\bu}\) as in \eqref{eq:optimal-u}. Clearly, \(\bX^*\succeq 0\). Since \( \Tr(\bX\bU)=\sum_{i=0}^{K-1}X_{ii}u_i, \)
the Lagrangian can be written as \( \mathcal{L}(\bu,\bX) = \sum_{i=0}^{K-1}(b_i^2-X_{ii})u_i + \Tr(\bX\bH). \)
Thus stationarity is equivalent to \(X_{ii}=b_i^2\) for all \(i=0,\ldots,K-1\), which holds for \(\bX^*=\bb\bb^\top\).

It remains to show primal feasibility and complementary slackness. Let \(\hat{\bU}=\diag(\hat{\bu})\). Since \(\bB^{\backslash[1]}\) and \(\bD^{-1}\) are elementwise nonnegative, \(\bH\) is elementwise nonnegative. For any \(\bz\in\mR^K\), using the symmetry of \(\bH\), we have
\[
    \bz^\top(\hat{\bU}-\bH)\bz
    = \sum_{i,j=0}^{K-1}H_{ij}\frac{b_j}{b_i}z_i^2
    - \sum_{i,j=0}^{K-1}H_{ij}z_iz_j
    = \frac{1}{2}\sum_{i,j=0}^{K-1}H_{ij}b_ib_j
    \left(\frac{z_i}{b_i}-\frac{z_j}{b_j}\right)^2
    \geq 0.
\]
Hence, \(\hat{\bU}-\bH\succeq 0\). Furthermore, \eqref{eq:optimal-u} gives \( (\hat{\bU}-\bH)\bb=\bo, \)
and therefore \(\Tr\big(\bX^*(\hat{\bU}-\bH)\big)  = \Tr\big(\bb\bb^\top(\hat{\bU}-\bH)\big)  =0. \)
The Slater condition holds by taking \(u_i=m\) for all \(i=0,\ldots,K-1\), where \(m>\lambda_{\max}(\bH)\). Hence, the KKT conditions are sufficient, \(\hat{\bu}\) is optimal, and the objective value is
\[
    \sum_{i=0}^{K-1}b_i^2 \hat{u}_i = \sum_{i=0}^{K-1}b_i(\bH\bb)_i  = \bb^\top\bH\bb. \Halmos
\]
\end{proof}

Note that when \(b_i \equiv b\), \eqref{eq:optimal-u} reduces to the row-sum solution \(\hat{u}_i=\sum_{j=0}^{K-1}H_{ij}\). Inserting the entries of \(\bH\) gives
\begin{equation}\label{eq:theoretical-u}
    \hat{u}_i =  \frac{A_i(1+2\alpha_{i+1})(A_i+2\alpha_i\alpha_{i+1})}{4LA_K(A_{i+1}-\alpha_{i+1}^2)}+ \sum_{k=i+1}^{K-1} \frac{A_k(1+2\alpha_{k+1})\alpha_i\alpha_{k+1}}{2L A_K(A_{k+1}- \alpha_{k+1}^2)}, \quad i=0,\cdots,K-1.
\end{equation}
The expression above also shows that when \(A_k - \alpha_k^2 = 0\), \(\hat{u}_i\) becomes unbounded, which recovers the unique feasible solution in \cref{lem:u-infty}.

\subsection{PEP-inspired proof of the convergence analysis of iGOGM} \label{sec:Algorithm-proof}

In this section, we provide the detailed proof of \cref{thm:convergence-rate}. As explained in \cite{Goujaud2023FundamentalProofStructures}, a feasible solution to the PEP's dual problem \eqref{prob:SDP-PEP}, or its relaxed version \eqref{prob:D}, provides a direct proof for the algorithm's convergence bound. We note that even though we use the PEP technique to find an upper bound on the convergence bound,
the proof can be understood without any prior knowledge of the PEP or SDP's duality theory.

Following this idea, we will establish our proof for iGOGM based on the feasible solution we derive in \cref{lem:optimal-u-fixed-tv}. Recall the properties of Lagrangian duality, any such feasible solution of \eqref{prob:D} yields the following upper bound on the optimization error
\begin{align*}
    f(\bx_K) - f_* \leq & \cL(\bG, \bF, \bv, \bv_*,\bu,\tau) \\
    =&  \tau R^2 + \sum_{i=0}^{K-1} u_i b_i^2 + \bF \left(\vec{\bff}_K + \sum_{i=0}^{K-1}v_{i,i+1}(\vec{\bff}_i - \vec{\bff}_{i+1}) + \sum_{i=0}^K v_{*,i}(\vec{\bff}_* - \vec{\bff}_i)\right)                                                                          \\
	  & - \Tr \left(\bG \left(\sum_{i=0}^{K-1} v_{i,i+1}\bA^{i,i+1} + \sum_{i=0}^K v_{*,i}\bA^{*,i} + \tau \vec{\bx}_0\vec{\bx}_0^\top + \sum_{i=0}^{K-1} u_i \vec{\be}_i\vec{\be}_i^\top + \frac{1}{2L}\vec{\bg}_K\vec{\bg}_K^\top \right)\right) \\
      \leq & \tau R^2 +\sum_{i=0}^{K-1} u_i b_i^2.
\end{align*}
Based on the above inequality, our proof is constructed by forming a linear combination of the inequalities derived from the function's convexity and \(L\)-smoothness properties. The dual variables \(\bv\) and \(\bu\) act as the coefficients for this combination. The first step is to multiply the following inequalities by their corresponding dual variables

{
    \fontsize{11.5pt}{12.5pt}\selectfont
 \begin{align*}
		 & f(\bx_{k+1}) - f(\bx_{k}) + \fprod{\grad f(\bx_{k+1}),\bx_{k} - \bx_{k+1}} + \frac{1}{2L}\norm{\grad f(\bx_k)- \grad f(\bx_{k+1})}^2 \leq 0 &  \cdots \   & v_{k,k+1} = \frac{A_k}{A_K}         \\
		 & f(\bx_k) - f_* + \fprod{\grad f(\bx_k),\bx_* - \bx_k} + \frac{1}{2L}\norm{\grad f(\bx_k)}^2 \leq 0                                          &  \cdots  \ & v_{*,k} = \frac{A_k - A_{k-1}}{A_K} \\
         & \norm{\bx_0 - \bx_*}^2 \leq R^2 & \cdots \ & \tau = \frac{L}{4A_K} \\
         & \norm{\be_k }^2 \leq b_k^2 & \cdots \ & u_k
	\end{align*}
    }
Summing over \(k\) and subtracting \(\frac{1}{2L}\norm{\grad f(\bx_K)}^2\) on both sides and rearranging terms we get
\begin{align*}
    &f(\bx_K) - f_* - \frac{1}{2L}\norm{\grad f(\bx_K)}^2                                                                                                                                                                  \\
		\leq              & \tau(R^2 - \norm{\bx_0 - \bx_*}^2) + \sum_{k=0}^{K-1} u_k(b_k^2 - \norm{\be_k}^2) - \sum_{k=0}^{K-1}\frac{A_k}{A_K}\fprod{\grad f(\bx_{k+1}),\bx_{k} - \bx_{k+1}}  \\
        &   - \sum_{k=0}^K\frac{A_k -A_{k-1}}{A_K}\fprod{\grad f(\bx_k),\bx_* - \bx_k}    - \frac{1}{2L}\sum_{k=0}^{K-1}\frac{A_k}{A_K}\norm{\grad f(\bx_k)- \grad f(\bx_{k+1})}^2                                                                                   \\
		                  &   - \frac{1}{2L}\sum_{k=0}^{K}\frac{A_k-A_{k-1}}{A_K} \norm{\grad f(\bx_k)}^2 - \frac{1}{2L}\norm{\grad f(\bx_K)}^2.
\end{align*}
Replacing \(\bx_k\) with the update rule of iGOGM defined by \eqref{eq:GFO} and \eqref{eq:OGM-algorithm}, the above inequality becomes
\begin{align*}
    & f(\bx_K) - f_* - \frac{1}{2L}\norm{\grad f(\bx_K)}^2   \\
    \leq & \tau(R^2 - \norm{\bx_0 - \bx_*}^2) + \sum_{k=0}^{K-1}u_k(b_k^2 - \norm{\be_k}^2) + \sum_{k=0}^{K}\frac{\alpha_k}{A_K} \fprod{\grad f(\bx_k),\bx_0 - \bx_*} \\
    & + \frac{1}{L}\sum_{k=0}^{K-1}\frac{A_k}{A_K}\fprod{\grad f(\bx_k),\grad f(\bx_{k+1})}  - \frac{2}{L}\sum_{k=0}^{K-1}\sum_{i=0}^{k} \frac{\alpha_{k+1} \alpha_i}{A_K} \fprod{\grad f(\bx_{k+1}),\grad f(\bx_{i}) + \be_i}                              \\
		                  &  - \frac{1}{L}\sum_{k=0}^{K-1} \frac{A_k}{A_K}\fprod{\grad f(\bx_{k+1}), \grad f(\bx_k) + \be_k} - \frac{1}{L}\sum_{k=0}^{K}\frac{A_k}{A_K} \norm{\grad f(\bx_k)}^2 .
\end{align*}

To complete the proof, we must find \(\tau\) and \(\seq{u_k}\) that ensures the right-hand-side of the above inequality is bounded by \(\tau R^2 + \sum_{k=0}^{K-1}u_k b_k^2\). This is equivalent to finding a feasible solution for the dual problem \eqref{prob:D}, since the bound is guaranteed if the positive semidefinite constraint \eqref{eq:PSD} holds. Our procedure for finding \(\bu\) is exactly the step of canceling the remaining terms in the right-hand-side. We will provide a detailed explanation below.

First, we do the Schur complement step corresponding to \(\bM = \frac{1}{\tau}\bw_\tau \bw_\tau^\top +\bR\), which is also the step to bound \(\fprod{\grad f(\bx_k), \bx_0 - \bx_*}\).
\begin{align*}
         & f(\bx_K) - f_* - \frac{1}{2L}\norm{\grad f(\bx_K)}^2   \\
     \leq &  \tau(R^2 - \norm{\bx_0 - \bx_*}^2) +  \sum_{k=0}^{K-1}u_k(b_k^2 - \norm{\be_k}^2) - \underbrace{\frac{1}{\tau}\norm{\tau(\bx_0-\bx_*) - \frac{1}{2} \sum_{k=0}^{K}\frac{\alpha_k}{A_K}\grad f(\bx_k)}^2}_{\frac{1}{\tau} \bw_\tau \bw_\tau^\top} \\
     & + \tau \norm{\bx_0 - \bx_*}^2 + \sum_{k=0}^K\left( \frac{\alpha_k^2}{4\tau A_K^2} - \frac{A_k}{L A_K} \right)\norm{\grad f(\bx_k)}^2  + \frac{1}{2\tau}\sum_{k=0}^K\sum_{i = 0}^{k-1}\frac{\alpha_k\alpha_i}{A_K^2}\fprod{\grad f(\bx_k), \grad f(\bx_i)}  \\
		                  & + \frac{1}{L}\sum_{k=0}^{K-1}\frac{A_k}{A_K}\fprod{\grad f(\bx_k),\grad f(\bx_{k+1})}  - \frac{2}{L}\sum_{k=0}^{K-1}\sum_{i=0}^{k} \frac{\alpha_{k+1} \alpha_i}{A_K} \fprod{\grad f(\bx_{k+1}),\grad f(\bx_{i}) + \be_i} \\
                          & - \frac{1}{L}\sum_{k=0}^{K-1} \frac{A_k}{A_K}\fprod{\grad f(\bx_{k+1}), \grad f(\bx_k) +  \be_k}   \\
    = & \frac{LR^2}{4A_K} + \sum_{k=0}^{K-1}u_k(b_k^2 - \norm{\be_k}^2) -\frac{4A_K}{L}\norm{\frac{L}{4A_K}(\bx_0-\bx_*) - \frac{1}{2} \sum_{k=0}^{K}\frac{\alpha_k}{A_K}\grad f(\bx_k)}^2  \\
    &  - \sum_{k=1}^K \frac{A_k - \alpha_k^2}{L A_K}\norm{\grad f(\bx_k)}^2    - \frac{2}{L}\sum_{k=0}^{K-1}\sum_{i=0}^{k} \frac{\alpha_{k+1} \alpha_i}{A_K} \fprod{\grad f(\bx_{k+1}), \be_i} - \frac{1}{L}\sum_{k=0}^{K-1} \frac{A_k}{A_K}\fprod{\grad f(\bx_{k+1}),  \be_k},
\end{align*}
where in the last equality we replace \(\tau = \frac{L}{4A_K}\) and \(A_0 = \alpha_0 = 1\). In the exact case, \(b_k \equiv 0 \) and \(\be_k \equiv \bo\), the inner-product terms in the above inequality are all zero, and all norm terms have negative coefficients. The convergence result of the exact GOGM can be recovered as
	\begin{equation*}
		f(\bx_K) - f_* - \frac{1}{2L}\norm{\grad f(\bx_K)}^2\leq  \frac{L\norm{\bx_0 - \bx^*}^2}{4A_K}.
	\end{equation*}
In the inexact case, we need to continue to find a value of \(\seq{u_k}\) that cancels the non-zero inner-product terms. To achieve this goal, we will follow the step of the Schur complement reduction characterized in \cref{lem:optimal-u-fixed-tv}, i.e., \(\bU-\bB^{\backslash[1]}\bD^{-1}\bB^{\backslash[1]\top}\succeq 0 \)
\begin{align*}
    & f(\bx_K) - f_* - \frac{1}{2L}\norm{\grad f(\bx_K)}^2  \\
    \leq & \frac{LR^2}{4A_K} +  \sum_{k=0}^{K-1}u_k(b_k^2 - \norm{\be_k}^2) - \sum_{k=1}^K \frac{A_k - \alpha_k^2}{L A_K}\norm{\grad f(\bx_k)}^2    \\
    &  - \frac{2}{L}\sum_{k=0}^{K-1}\sum_{i=0}^{k} \frac{\alpha_{k+1} \alpha_i}{A_K} \fprod{\grad f(\bx_{k+1}), \be_i}  - \frac{1}{L}\sum_{k=0}^{K-1} \frac{A_k}{A_K}\fprod{\grad f(\bx_{k+1}),  \be_k} \\
    = & \frac{LR^2}{4A_K} + \sum_{k=0}^{K-1}u_k(b_k^2 - \norm{\be_k}^2) + \sum_{k=0}^{K-1} \frac{1}{LA_K(A_{k+1} - \alpha_{k+1}^2)} \sum_{i=0}^{k-1} \alpha_{k+1}^2\alpha_i^2 \norm{\be_i}^2 \\
    & - \underbrace{\sum_{k=0}^{K-1} \frac{1}{LA_K(A_{k+1} - \alpha_{k+1}^2)}\norm{(A_{k+1} - \alpha_{k+1}^2)\grad f(\bx_{k+1}) + \sum_{i=0}^k\alpha_{k+1}\alpha_i \be_i + \frac{1}{2}A_k \be_k}^2}_{\bB^{\backslash[1]}\bD^{-1}\bB^{\backslash[1]^\top }}                                                                \\
                           & + \sum_{k=0}^{K-1} \frac{1}{LA_K(A_{k+1} - \alpha_{k+1}^2)}\sum_{i=0}^{k}\sum_{j=0}^{i-1}2 \alpha_{k+1}^2\alpha_i\alpha_j \fprod{\be_i,\be_j} \numberthis \label{eq:slack_motivation} \\
		                   & +\sum_{k=0}^{K-1} \frac{1}{LA_K(A_{k+1} - \alpha_{k+1}^2)} \sum_{i=0}^{k-1} \alpha_{k+1} \alpha_i A_k \fprod{\be_i,\be_k} + \sum_{k=0}^{K-1} \frac{(2\alpha_{k+1} \alpha_k + A_k)^2}{4LA_K(A_{k+1} - \alpha_{k+1}^2)} \norm{\be_k}^2 \\
                        \leq & \frac{LR^2}{4A_K} +  \sum_{k=0}^{K-1}u_k(b_k^2 - \norm{\be_k}^2) + \sum_{k=0}^{K-1}  \frac{(2\alpha_{k+1} \alpha_k + A_k)^2}{4LA_K (A_{k+1} - \alpha_{k+1}^2)}\norm{\be_k}^2  \\
                        & + \sum_{k=0}^{K-1}\sum_{j=k+1}^{K-1} \frac{\alpha_k^2 \alpha_{j+1}^2}{LA_K(A_{j+1}-\alpha_{j+1}^2)}\norm{\be_k}^2            + \sum_{k=0}^{K-1}\sum_{i=0}^{k-1} \frac{\alpha_{k+1}\alpha_i (A_k+ 2\alpha_{k+1} \alpha_k)}{LA_K(A_{k+1} - \alpha_{k+1}^2)}\fprod{\be_k,\be_i}                                                              \\
		     &  + \sum_{k=0}^{K-1}\sum_{i=0}^{k-1}\sum_{j=k+1}^{K-1} \frac{2\alpha_{j+1}^2 \alpha_k \alpha_i}{LA_K(A_{j+1} - \alpha_{j+1}^2)} \fprod{\be_k,\be_i}.
\end{align*}
For notation simplicity, let \(0\leq i <k \leq K-1\), we use \(P_{k,i}\) to replace the coefficients of \(\fprod{\be_k,\be_i}\) as
	\begin{align*}
		P_{k,k} := & \frac{(2\alpha_{k+1} \alpha_k + A_k)^2}{4LA_K(A_{k+1} - \alpha_{k+1}^2)} +\sum_{j=k+1}^{K-1} \frac{\alpha_k^2 \alpha_{j+1}^2}{LA_K(A_{j+1}-\alpha_{j+1}^2)},                                \\
		P_{k,i} := & \frac{\alpha_{k+1}\alpha_i (A_k + 2 \alpha_{k+1}\alpha_k )}{LA_K(A_{k+1} - \alpha_{k+1}^2)} + \sum_{j=k+1}^{K-1} \frac{2\alpha_{j+1}^2 \alpha_k \alpha_i}{LA_K(A_{j+1} - \alpha_{j+1}^2)}.
	\end{align*}
	Hence, the above inequality can be rewritten as
	\begin{align*}
     & f(\bx_K) - f_* - \frac{1}{2L}\norm{\grad f(\bx_K)}^2  \\
     \leq & \frac{LR^2}{4A_K} +  \sum_{k=0}^{K-1}u_k(b_k^2 - \norm{\be_k}^2) +  \sum_{k=0}^{K-1} P_{k,k} \norm{\be_k}^2  + \sum_{k=0}^{K-1}\sum_{i=0}^{k-1} P_{k,i}\fprod{\be_k,\be_i}     \\
		= & \frac{LR^2}{4A_K} + \sum_{k=0}^{K-1}u_k b_k^2 - \left[ \sum_{k=0}^{K-1}(u_k-P_{k,k})\norm{\be_k}^2 - \sum_{k=0}^{K-1}\sum_{i=0}^{k-1}P_{k,i}\fprod{\be_k,\be_i}\right].
	\end{align*}

It remains to choose \(u_k\) so that the square-bracketed term above is nonnegative for arbitrary error vectors. Notice that for each cross term
\( -P_{k,i}\fprod{\be_k,\be_i}, \  0\leq i<k\leq K-1,
\)
we can construct a quadratic form with \(\zeta_{k,i}>0\) as
\[
    \frac{P_{k,i}}{2}\zeta_{k,i}
    \norm{
        \be_k-\zeta_{k,i}^{-1}\be_i
    }^2
    = \frac{P_{k,i}}{2}\zeta_{k,i}\norm{\be_k}^2
    + \frac{P_{k,i}}{2}\zeta_{k,i}^{-1}\norm{\be_i}^2
    - P_{k,i}\fprod{\be_k,\be_i},
\]
which shows that the cross term can be matched for any positive \(\zeta_{k,i}\). With this construction, we can rewrite the square-bracketed term as
\begin{align*}
     & \sum_{k=0}^{K-1}(u_k-P_{k,k})\norm{\be_k}^2
     - \sum_{k=0}^{K-1}\sum_{i=0}^{k-1}P_{k,i}\fprod{\be_k,\be_i} \\
     ={}&
     \sum_{k=0}^{K-1}
    \left(
        u_k-P_{k,k}
        - \frac{1}{2}\sum_{i=0}^{k-1}P_{k,i}\zeta_{k,i}
        - \frac{1}{2}\sum_{j=k+1}^{K-1}P_{j,k}\zeta_{j,k}^{-1}
    \right) \norm{\be_k}^2  + \frac{1}{2}
    \sum_{k=0}^{K-1}\sum_{i=0}^{k-1}
    P_{k,i}\zeta_{k,i}
    \norm{
        \be_k-\zeta_{k,i}^{-1}\be_i
    }^2 .
\end{align*}
Choosing
\[
    u_k
    = P_{k,k}
    + \frac{1}{2}\sum_{i=0}^{k-1}P_{k,i}\zeta_{k,i}
    + \frac{1}{2}\sum_{j=k+1}^{K-1}P_{j,k}\zeta_{j,k}^{-1}
\]
makes the coefficient of each \(\norm{\be_k}^2\) term in the first summation zero, and hence makes the whole square-bracketed term nonnegative. Moreover, recall that we also want to make the remaining error term
\(
    \sum_{k=0}^{K-1}u_k b_k^2
\)
as small as possible. With the above choice of \(u_k\), we have
\begin{align*}
    \sum_{k=0}^{K-1}u_k b_k^2
    = \sum_{k=0}^{K-1}P_{k,k}b_k^2
    + \frac{1}{2}
    \sum_{k=0}^{K-1}\sum_{i=0}^{k-1}
    P_{k,i}
    \left(
        \zeta_{k,i}b_k^2
        + \zeta_{k,i}^{-1}b_i^2
    \right).
\end{align*}
Since the above expression is separable in each \(\zeta_{k,i}\), with \(\bb > \mathbf{0}\), minimizing it with respect to \(\zeta_{k,i}>0\) gives
\[
    \zeta_{k,i}=\frac{b_i}{b_k}.
\]
The corresponding \(u_k\) becomes
\[
    \hat{u}_k
    = P_{k,k}
    + \frac{1}{2b_k}\sum_{i=0}^{k-1}P_{k,i}b_i
    + \frac{1}{2b_k}\sum_{j=k+1}^{K-1}P_{j,k}b_j,
    \qquad k=0,\ldots,K-1.
\]
With this choice, the square-bracketed term becomes
\begin{align*}
    &\sum_{k=0}^{K-1}(\hat{u}_k-P_{k,k})\norm{\be_k}^2
    - \sum_{k=0}^{K-1}\sum_{i=0}^{k-1}
    P_{k,i}\fprod{\be_k,\be_i} \\
    ={}&
    \frac{1}{2}
    \sum_{k=0}^{K-1}\sum_{i=0}^{k-1}
    P_{k,i}b_kb_i
    \norm{
        \frac{\be_k}{b_k}
        - \frac{\be_i}{b_i}
    }^2
    \geq 0.
\end{align*}
Therefore, the square-bracketed term is nonnegative and can be dropped. We finally obtain
\[
    f(\bx_K)-f_*-\frac{1}{2L}\norm{\grad f(\bx_K)}^2
    \leq \frac{LR^2}{4A_K}
    + \sum_{k=0}^{K-1}\hat{u}_k b_k^2. \Halmos
\]

Inspired by the approach for the analysis of iGOGM, we also find a convergence rate of inexact Generalized FGM (iGFGM) with the algorithm shown in \cref{alg:iGFGM} in \cref{appx:iGFGM}. Similar to the result of iGOGM, the bound of iGFGM is a summation of the exact convergence rate and accumulated error.

\begin{remark}
    Note that \cref{thm:convergence-rate} is valid only for \(0 < \alpha_k^2 < A_k\), i.e., \(\alpha_k^2\) cannot be equal to \(A_k\), which reflects a limitation of the analytically feasible solution we derived. However, as demonstrated in \cref{thm:convergence-iFGM}, for iFGM, we can set \(\alpha_k^2 = A_k\). This particular choice allows iFGM to attain its fastest convergence rate if the oracle is exact.
\end{remark}

\section{Conclusions}
In this paper, we analyze the Generalized Optimized Gradient Method (GOGM) with the inexact gradient oracle, i.e., iGOGM, under the absolute error assumption. By utilizing the Performance Estimation Problem (PEP) analysis tool, a new upper bound for the convergence rate of the iGOGM algorithm is derived. This bound comprises two components: the convergence rate obtained from the exact gradient oracle and the accumulated error resulting from the gradient oracle's inexactness. Such a bound demonstrates the effect of the inexactness and does not require boundedness of the feasible region. Furthermore, the accumulated error component of the bound is independent of the initial condition.

Furthermore, from this convergence bound, the optimal schedule to set the oracle inexactness along iterations is proposed. Such a study aims to minimize the sampling or computational effort for gradient estimation while maintaining the convergence rate.

\newpage
\bibliographystyle{plainnat}
\IfFileExists{refs.bib}
  {\bibliography{refs}}
  {\bibliography{../refs}}

@Article{Devolder2013FirstOrderMethodsa,
  author        = {Devolder, Olivier and Glineur, Fran{\c{c}}ois and Nesterov, Yurii and others},
  journal       = {CORE Discussion Papers},
  title         = {First-order methods with inexact oracle: the strongly convex case},
  year          = {2013},
  pages         = {47},
  volume        = {2013016},
  date-added    = {2024-05-08 17:05:21 -0400},
  date-modified = {2024-05-08 17:05:21 -0400},
  groups        = {inexact oracle},
}

@Article{Devolder2013FirstOrderMethods,
  author     = {Olivier Devolder and Fran{\c{c}}ois Glineur and Yurii Nesterov},
  journal    = {Mathematical Programming},
  title      = {First-order methods of smooth convex optimization with inexact oracle},
  year       = {2013},
  month      = {jun},
  number     = {1-2},
  pages      = {37--75},
  volume     = {146},
  bdsk-url-1 = {https://doi.org/10.1007/s10107-013-0677-5},
  doi        = {10.1007/s10107-013-0677-5},
  groups     = {inexact oracle},
  publisher  = {Springer Science and Business Media {LLC}},
}

@Article{Nabou2024ProximalGradientMethods,
title={Proximal gradient methods with inexact oracle of degree q for composite optimization},
  author={Nabou, Yassine and Glineur, Fran{\c{c}}ois and Necoara, Ion},
  journal={Optimization Letters},
  volume={19},
  number={2},
  pages={285--306},
  year={2025},
  publisher={Springer}
}

@Article{Dessel2024OptimalInexactnessSchedules,
  author     = {Guillaume Van Dessel and Fran{\c c}ois Glineur},
  journal    = {Optimization Methods and Software},
  title      = {Optimal Inexactness Schedules for Tunable Oracle-Based Methods},
  year       = {2024},
  number     = {0},
  pages      = {1--35},
  volume     = {0},
  bdsk-url-1 = {https://doi.org/10.1080/10556788.2023.2296982},
  doi        = {10.1080/10556788.2023.2296982},
  groups     = {inexact oracle},
  publisher  = {Taylor \& Francis},
}

@Article{dAspremont2008SmoothOptimizationApproximate,
  author     = {d'Aspremont, Alexandre},
  journal    = {SIAM Journal on Optimization},
  title      = {Smooth Optimization with Approximate Gradient},
  year       = {2008},
  number     = {3},
  pages      = {1171-1183},
  volume     = {19},
  bdsk-url-1 = {https://doi.org/10.1137/060676386},
  doi        = {10.1137/060676386},
  groups     = {inexact oracle},
}

@TechReport{Devolder2013IntermediateGradientMethods,
  author      = {Devolder, Olivier and Glineur, Fran{\c{c}}ois and Nesterov, Yurii and others},
  institution = {Technical report, CORE-2013017},
  title       = {Intermediate gradient methods for smooth convex problems with inexact oracle},
  year        = {2013},
  groups      = {inexact oracle},
}

@Article{ArtemVasin2023AcceleratedGradientMethods,
  author     = { Vasin,Artem and  Gasnikov, Alexander and Dvurechensky,Pavel  and  Spokoiny, Vladimir},
  journal    = {Optimization Methods and Software},
  title      = {Accelerated gradient methods with absolute and relative noise in the gradient},
  year       = {2023},
  number     = {6},
  pages      = {1180--1229},
  volume     = {38},
  bdsk-url-1 = {https://doi.org/10.1080/10556788.2023.2212503},
  doi        = {10.1080/10556788.2023.2212503},
  groups     = {inexact oracle},
  publisher  = {Taylor \& Francis},
}

@Article{Kornilov2023IntermediateGradientMethods,
  author  = {Kornilov, Nikita and Gorbunov, Eduard and Alkousa, Mohammad and Stonyakin, Fedor and Dvurechensky, Pavel and Gasnikov, Alexander},
  journal = {arXiv preprint arXiv:2310.00506},
  title   = {Intermediate Gradient Methods with Relative Inexactness},
  year    = {2023},
  groups  = {inexact oracle},
}

@Article{DmitryKamzolov2021UniversalIntermediateGradient,
  author     = {Dmitry Kamzolov, Pavel Dvurechensky and Alexander V. Gasnikov},
  journal    = {Optimization Methods and Software},
  title      = {Universal intermediate gradient method for convex problems with inexact oracle},
  year       = {2021},
  number     = {6},
  pages      = {1289--1316},
  volume     = {36},
  bdsk-url-1 = {https://doi.org/10.1080/10556788.2019.1711079},
  doi        = {10.1080/10556788.2019.1711079},
  groups     = {inexact oracle},
  publisher  = {Taylor \& Francis},
}

@Article{DasGupta2023BranchBoundPerformance,
  author     = {Das Gupta, Shuvomoy and Van Parys, Bart P. G. and Ryu, Ernest K.},
  journal    = {Mathematical Programming},
  title      = {Branch-and-Bound Performance Estimation Programming: A Unified Methodology for Constructing Optimal Optimization Methods},
  year       = {2023},
  issn       = {1436-4646},
  bdsk-url-1 = {https://doi.org/10.1007/s10107-023-01973-1},
  doi        = {10.1007/s10107-023-01973-1},
  groups     = {PEP},
  refid      = {Das Gupta2023},
}

@InProceedings{Taylor2019StochasticFirstOrder,
  author     = {Taylor, Adrien and Bach, Francis},
  booktitle  = {Proceedings of the Thirty-Second Conference on Learning Theory},
  title      = {Stochastic First-Order Methods: Non-Asymptotic and Computer-Aided Analyses Via Potential Functions},
  year       = {2019},
  editor     = {Beygelzimer, Alina and Hsu, Daniel},
  month      = {25--28 Jun},
  pages      = {2934--2992},
  publisher  = {PMLR},
  series     = {Proceedings of Machine Learning Research},
  volume     = {99},
  bdsk-url-1 = {https://proceedings.mlr.press/v99/taylor19a.html},
  groups     = {PEP},
  pdf        = {http://proceedings.mlr.press/v99/taylor19a/taylor19a.pdf},
}

@article{Grimmer2023ProvablyFasterGradient,
  title={Provably faster gradient descent via long steps},
  author={Grimmer, Benjamin},
  journal={SIAM Journal on Optimization},
  volume={34},
  number={3},
  pages={2588--2608},
  year={2024},
  publisher={SIAM}
}

@Article{Barre2022PrincipledAnalysesDesign,
  author     = {Mathieu Barr{\'{e}} and Adrien B. Taylor and Francis Bach},
  journal    = {Mathematical Programming},
  title      = {Principled Analyses and Design of First-Order Methods with Inexact Proximal Operators},
  year       = {2022},
  month      = {dec},
  number     = {1-2},
  pages      = {185--230},
  volume     = {201},
  bdsk-url-1 = {https://doi.org/10.1007/s10107-022-01903-7},
  doi        = {10.1007/s10107-022-01903-7},
  groups     = {PEP},
  publisher  = {Springer Science and Business Media {LLC}},
}

@Article{Taylor2017ExactWorstCase,
  author     = {Taylor, Adrien B. and Hendrickx, Julien M. and Glineur, Fran\c{c}ois},
  journal    = {SIAM Journal on Optimization},
  title      = {Exact Worst-Case Performance of First-Order Methods for Composite Convex Optimization},
  year       = {2017},
  number     = {3},
  pages      = {1283-1313},
  volume     = {27},
  bdsk-url-1 = {https://doi.org/10.1137/16M108104X},
  doi        = {10.1137/16M108104X},
  groups     = {PEP},
}

@Article{Taylor2016SmoothStronglyConvex,
  author     = {Adrien B. Taylor and Julien M. Hendrickx and Fran{\c{c}}ois Glineur},
  journal    = {Mathematical Programming},
  title      = {Smooth Strongly Convex Interpolation and Exact Worst-Case Performance of First-Order Methods},
  year       = {2016},
  month      = {may},
  number     = {1-2},
  pages      = {307--345},
  volume     = {161},
  bdsk-url-1 = {https://doi.org/10.1007/s10107-016-1009-3},
  doi        = {10.1007/s10107-016-1009-3},
  groups     = {PEP},
  publisher  = {Springer Science and Business Media {LLC}},
}

@Article{Drori2016OptimalVariantKelleys,
  author     = {Yoel Drori and Marc Teboulle},
  journal    = {Mathematical Programming},
  title      = {An Optimal Variant of Kelley's Cutting-Plane Method},
  year       = {2016},
  month      = {feb},
  number     = {1-2},
  pages      = {321--351},
  volume     = {160},
  bdsk-url-1 = {https://doi.org/10.1007/s10107-016-0985-7},
  doi        = {10.1007/s10107-016-0985-7},
  groups     = {PEP},
  publisher  = {Springer Science and Business Media {LLC}},
}

@Article{Drori2019EfficientFirstOrder,
  author     = {Yoel Drori and Adrien B. Taylor},
  journal    = {Mathematical Programming},
  title      = {Efficient First-Order Methods for Convex Minimization: A Constructive Approach},
  year       = {2019},
  month      = {jun},
  number     = {1-2},
  pages      = {183--220},
  volume     = {184},
  bdsk-url-1 = {https://doi.org/10.1007/s10107-019-01410-2},
  doi        = {10.1007/s10107-019-01410-2},
  groups     = {PEP},
  publisher  = {Springer Science and Business Media {LLC}},
}

@Article{Kim2021AcceleratedProximalPoint,
  author     = {Donghwan Kim},
  journal    = {Mathematical Programming},
  title      = {Accelerated Proximal Point Method for Maximally Monotone Operators},
  year       = {2021},
  month      = {mar},
  number     = {1-2},
  pages      = {57--87},
  volume     = {190},
  bdsk-url-1 = {https://doi.org/10.1007/s10107-021-01643-0},
  doi        = {10.1007/s10107-021-01643-0},
  groups     = {PEP},
  publisher  = {Springer Science and Business Media {LLC}},
}

@Article{Kim2018AnotherLookFast,
  author     = {Kim, Donghwan and Fessler, Jeffrey A.},
  journal    = {SIAM Journal on Optimization},
  title      = {Another Look at the Fast Iterative Shrinkage/Thresholding Algorithm (FISTA)},
  year       = {2018},
  number     = {1},
  pages      = {223-250},
  volume     = {28},
  bdsk-url-1 = {https://doi.org/10.1137/16M108940X},
  doi        = {10.1137/16M108940X},
  groups     = {PEP},
}

@Article{Ryu2019FindingForwardDouglasrachford,
  author     = {Ernest K. Ryu and B{\^{a}}ng C{\^{o}}ng V{\~{u}}},
  journal    = {Journal of Optimization Theory and Applications},
  title      = {Finding the Forward-Douglas{\textendash}Rachford-Forward Method},
  year       = {2019},
  month      = {oct},
  number     = {3},
  pages      = {858--876},
  volume     = {184},
  bdsk-url-1 = {https://doi.org/10.1007/s10957-019-01601-z},
  doi        = {10.1007/s10957-019-01601-z},
  groups     = {PEP},
  publisher  = {Springer Science and Business Media {LLC}},
}

@Article{VanScoy2018FastestKnownGlobally,
  author     = {Van Scoy, Bryan and Freeman, Randy A. and Lynch, Kevin M.},
  journal    = {IEEE Control Systems Letters},
  title      = {The Fastest Known Globally Convergent First-Order Method for Minimizing Strongly Convex Functions},
  year       = {2018},
  number     = {1},
  pages      = {49-54},
  volume     = {2},
  bdsk-url-1 = {https://doi.org/10.1109/LCSYS.2017.2722406},
  doi        = {10.1109/LCSYS.2017.2722406},
  groups     = {IQC},
}

@InProceedings{Cyrus2018RobustAcceleratedOptimization,
  author     = {Cyrus, Saman and Hu, Bin and Van Scoy, Bryan and Lessard, Laurent},
  booktitle  = {2018 Annual American Control Conference (ACC)},
  title      = {A Robust Accelerated Optimization Algorithm for Strongly Convex Functions},
  year       = {2018},
  pages      = {1376-1381},
  bdsk-url-1 = {https://doi.org/10.23919/ACC.2018.8430824},
  doi        = {10.23919/ACC.2018.8430824},
  groups     = {IQC},
}

@Article{Klerk2016WorstCaseComplexity,
  author     = {Etienne de Klerk and Fran{\c{c}}ois Glineur and Adrien B. Taylor},
  journal    = {Optimization Letters},
  title      = {On the Worst-Case Complexity of the Gradient Method with Exact Line Search for Smooth Strongly Convex Functions},
  year       = {2016},
  month      = {oct},
  number     = {7},
  pages      = {1185--1199},
  volume     = {11},
  bdsk-url-1 = {https://doi.org/10.1007/s11590-016-1087-4},
  doi        = {10.1007/s11590-016-1087-4},
  groups     = {PEP},
  publisher  = {Springer Science and Business Media {LLC}},
}

@Article{DeKlerk2020WorstCaseConvergence,
  author     = {De Klerk, Etienne and Glineur, Fran\c{c}ois and Taylor, Adrien B.},
  journal    = {SIAM Journal on Optimization},
  title      = {Worst-Case Convergence Analysis of Inexact Gradient and Newton Methods Through Semidefinite Programming Performance Estimation},
  year       = {2020},
  number     = {3},
  pages      = {2053-2082},
  volume     = {30},
  bdsk-url-1 = {https://doi.org/10.1137/19M1281368},
  doi        = {10.1137/19M1281368},
  groups     = {PEP},
}

@InProceedings{Barre2020ComplexityGuaranteesPolyak,
  author     = {Barr\'e, Mathieu and Taylor, Adrien and d'Aspremont, Alexandre},
  booktitle  = {Proceedings of Thirty Third Conference on Learning Theory},
  title      = {Complexity Guarantees for Polyak Steps with Momentum},
  year       = {2020},
  editor     = {Abernethy, Jacob and Agarwal, Shivani},
  month      = {09--12 Jul},
  pages      = {452--478},
  publisher  = {PMLR},
  series     = {Proceedings of Machine Learning Research},
  volume     = {125},
  bdsk-url-1 = {https://proceedings.mlr.press/v125/barre20a.html},
  groups     = {PEP},
}

@Article{Kim2018GeneralizingOptimizedGradient,
  author     = {Kim, Donghwan and Fessler, Jeffrey A.},
  journal    = {SIAM Journal on Optimization},
  title      = {Generalizing the Optimized Gradient Method for Smooth Convex Minimization},
  year       = {2018},
  number     = {2},
  pages      = {1920-1950},
  volume     = {28},
  bdsk-url-1 = {https://doi.org/10.1137/17M112124X},
  doi        = {10.1137/17M112124X},
  groups     = {PEP},
}

@Article{Drori2014PerformanceFirstOrder,
  author     = {Drori, Yoel and Teboulle, Marc},
  journal    = {Mathematical Programming},
  title      = {Performance of First-Order Methods for Smooth Convex Minimization: A Novel Approach},
  year       = {2014},
  issn       = {1436-4646},
  number     = {1},
  pages      = {451--482},
  volume     = {145},
  bdsk-url-1 = {https://doi.org/10.1007/s10107-013-0653-0},
  doi        = {10.1007/s10107-013-0653-0},
  groups     = {PEP},
  refid      = {Drori2014},
}

@Article{Kim2016OptimizedFirstOrder,
  author     = {Kim, Donghwan and Fessler, Jeffrey A.},
  journal    = {Mathematical Programming},
  title      = {Optimized First-Order Methods for Smooth Convex Minimization},
  year       = {2016},
  issn       = {1436-4646},
  number     = {1},
  pages      = {81--107},
  volume     = {159},
  bdsk-url-1 = {https://doi.org/10.1007/s10107-015-0949-3},
  doi        = {10.1007/s10107-015-0949-3},
  groups     = {PEP},
  refid      = {Kim2016},
}

@InProceedings{Schmidt2011ConvergenceRatesInexact,
  author     = {Schmidt, Mark and Roux, Nicolas and Bach, Francis},
  booktitle  = {Advances in Neural Information Processing Systems},
  title      = {Convergence Rates of Inexact Proximal-Gradient Methods for Convex Optimization},
  year       = {2011},
  editor     = {J. Shawe-Taylor and R. Zemel and P. Bartlett and F. Pereira and K.Q. Weinberger},
  publisher  = {Curran Associates, Inc.},
  volume     = {24},
  bdsk-url-1 = {https://proceedings.neurips.cc/paper_files/paper/2011/file/8f7d807e1f53eff5f9efbe5cb81090fb-Paper.pdf},
  groups     = {inexact oracle},
}

@Article{Rockafellar1976MonotoneOperatorsProximal,
  author  = {Rockafellar, R. Tyrrell},
  journal = {SIAM Journal on Control and Optimization},
  title   = {Monotone Operators and the Proximal Point Algorithm},
  year    = {1976},
  number  = {5},
  pages   = {877-898},
  volume  = {14},
  doi     = {10.1137/0314056},
  groups  = {proximal point},
}

@Article{Dvurechensky2016StochasticIntermediateGradient,
  author  = {Dvurechensky, Pavel and Gasnikov, Alexander},
  journal = {Journal of Optimization Theory and Applications},
  title   = {Stochastic Intermediate Gradient Method for Convex Problems with Stochastic Inexact Oracle},
  year    = {2016},
  issn    = {1573-2878},
  number  = {1},
  pages   = {121--145},
  volume  = {171},
  doi     = {10.1007/s10957-016-0999-6},
  groups  = {inexact oracle},
  refid   = {Dvurechensky2016},
}

@Book{Polyak1987IntroductionOptimization,
  author    = {Polyak, Boris T},
  title     = {Introduction to Optimization},
  year      = {1987},
  publisher = {New York, Optimization Software},
}

@InProceedings{Cohen2018AccelerationNoiseCorrupted,
  author    = {Cohen, Michael and Diakonikolas, Jelena and Orecchia, Lorenzo},
  booktitle = {Proceedings of the 35th International Conference on Machine Learning},
  title     = {On Acceleration with Noise-Corrupted Gradients},
  year      = {2018},
  editor    = {Dy, Jennifer and Krause, Andreas},
  month     = {10--15 Jul},
  pages     = {1019--1028},
  publisher = {PMLR},
  series    = {Proceedings of Machine Learning Research},
  volume    = {80},
  groups    = {inexact oracle},
  pdf       = {http://proceedings.mlr.press/v80/cohen18a/cohen18a.pdf},
}

@Article{Aybat2020RobustAcceleratedGradient,
  author  = {Aybat, Necdet Serhat and Fallah, Alireza and G\"{u}rb\"{u}zbalaban, Mert and Ozdaglar, Asuman},
  journal = {SIAM Journal on Optimization},
  title   = {Robust Accelerated Gradient Methods for Smooth Strongly Convex Functions},
  year    = {2020},
  number  = {1},
  pages   = {717-751},
  volume  = {30},
  doi     = {10.1137/19M1244925},
  groups  = {Inexact, inexact oracle},
}

@article{gurbuzbalaban2023robustly,
  title={Robustly Stable Accelerated Momentum Methods With A Near-Optimal ${L}_2$ Gain and ${H}_\infty$ Performance},
  author={Gurbuzbalaban, Mert},
  journal={arXiv preprint arXiv:2309.11481},
  year={2023}
}

@Article{Gannot2022FrequencyDomainAnalysis,
  author  = {Gannot, Oran},
  journal = {Mathematical Programming},
  title   = {A Frequency-Domain Analysis of Inexact Gradient Methods},
  year    = {2022},
  issn    = {1436-4646},
  number  = {1},
  pages   = {975--1016},
  volume  = {194},
  doi     = {10.1007/s10107-021-01665-8},
  groups  = {Inexact, inexact oracle},
  refid   = {Gannot2022},
}

@Article{FedorStonyakin2021InexactModelFramework,
  author    = {Fedor Stonyakin and Alexander Tyurin and Alexander Gasnikov and Pavel Dvurechensky, Artem Agafonov and Darina Dvinskikh and Mohammad Alkousa and Dmitry Pasechnyuk and Sergei Artamonov and Victorya Piskunova},
  journal   = {Optimization Methods and Software},
  title     = {Inexact model: a framework for optimization and variational inequalities},
  year      = {2021},
  number    = {6},
  pages     = {1155--1201},
  volume    = {36},
  doi       = {10.1080/10556788.2021.1924714},
  groups    = {inexact oracle},
  publisher = {Taylor \& Francis},
}

@Article{Khanh2023InexactProximalMethods,
  author  = {Khanh, Pham Duy and Mordukhovich, Boris and Tran, Dat Ba},
  journal = {arXiv preprint arXiv:2307.15596},
  title   = {Inexact proximal methods for weakly convex functions},
  year    = {2023},
  groups  = {inexact oracle},
}

@Article{Gasnikov2019FastGradientDescent,
  author  = {Gasnikov, A. V. and Tyurin, A. I.},
  journal = {Computational Mathematics and Mathematical Physics},
  title   = {Fast Gradient Descent for Convex Minimization Problems with an Oracle Producing a $(\delta,L)$-Model of Function at the Requested Point},
  year    = {2019},
  issn    = {1555-6662},
  number  = {7},
  pages   = {1085--1097},
  volume  = {59},
  doi     = {10.1134/S0965542519070078},
  groups  = {inexact oracle},
  refid   = {Gasnikov2019},
}

@InProceedings{Stonyakin2019GradientMethodsProblems,
  author    = {Stonyakin, Fedor S. and Dvinskikh, Darina and Dvurechensky, Pavel and Kroshnin, Alexey and Kuznetsova, Olesya and Agafonov, Artem and Gasnikov, Alexander and Tyurin, Alexander and Uribe, C{\'e}sar A. and Pasechnyuk, Dmitry and Artamonov, Sergei},
  booktitle = {Mathematical Optimization Theory and Operations Research},
  title     = {Gradient Methods for Problems with Inexact Model of the Objective},
  year      = {2019},
  address   = {Cham},
  editor    = {Khachay, Michael and Kochetov, Yury and Pardalos, Panos},
  pages     = {97--114},
  publisher = {Springer International Publishing},
  groups    = {inexact oracle},
  isbn      = {978-3-030-22629-9},
}

@Article{Hu2021AnalysisBiasedStochastic,
  author  = {Hu, Bin and Seiler, Peter and Lessard, Laurent},
  journal = {Mathematical Programming},
  title   = {Analysis of biased stochastic gradient descent using sequential semidefinite programs},
  year    = {2021},
  issn    = {1436-4646},
  number  = {1},
  pages   = {383--408},
  volume  = {187},
  doi     = {10.1007/s10107-020-01486-1},
  groups  = {IQC},
  refid   = {Hu2021},
}

@Article{Drori2017ExactInformationBased,
  author   = {Yoel Drori},
  journal  = {Journal of Complexity},
  title    = {The exact information-based complexity of smooth convex minimization},
  year     = {2017},
  issn     = {0885-064X},
  pages    = {1-16},
  volume   = {39},
  doi      = {10.1016/j.jco.2016.11.001},
  groups   = {PEP},
  keywords = {Convex optimization, Complexity, Rate of convergence, Information-based complexity},
}

@Article{Park2023Factor$2$Acceleration,
  author  = {Park, Chanwoo and Park, Jisun and Ryu, Ernest K.},
  journal = {Applied Mathematics \& Optimization},
  title   = {Factor-$\sqrt{2}$ Acceleration of Accelerated Gradient Methods},
  year    = {2023},
  issn    = {1432-0606},
  number  = {3},
  pages   = {77},
  volume  = {88},
  doi     = {10.1007/s00245-023-10047-9},
  groups  = {PEP},
  refid   = {Park2023},
}

@Article{Kim2020OptimizingEfficiencyFirst,
  author     = {Donghwan Kim and Jeffrey A. Fessler},
  journal    = {Journal of Optimization Theory and Applications},
  title      = {Optimizing the Efficiency of First-Order Methods for Decreasing the Gradient of Smooth Convex Functions},
  year       = {2020},
  issn       = {1573-2878},
  month      = {oct},
  number     = {1},
  pages      = {192--219},
  volume     = {188},
  bdsk-url-1 = {https://doi.org/10.1007/s10957-020-01770-2},
  doi        = {10.1007/s10957-020-01770-2},
  groups     = {PEP},
  publisher  = {Springer Science and Business Media {LLC}},
  refid      = {Kim2021},
}

@Article{Kim2017ConvergenceAnalysisOptimized,
  author  = {Kim, Donghwan and Fessler, Jeffrey A.},
  journal = {Journal of Optimization Theory and Applications},
  title   = {On the Convergence Analysis of the Optimized Gradient Method},
  year    = {2017},
  issn    = {1573-2878},
  number  = {1},
  pages   = {187--205},
  volume  = {172},
  doi     = {10.1007/s10957-016-1018-7},
  groups  = {PEP},
  refid   = {Kim2017},
}

@Article{Lessard2016AnalysisDesignOptimization,
  author  = {Lessard, Laurent and Recht, Benjamin and Packard, Andrew},
  journal = {SIAM Journal on Optimization},
  title   = {Analysis and Design of Optimization Algorithms via Integral Quadratic Constraints},
  year    = {2016},
  number  = {1},
  pages   = {57-95},
  volume  = {26},
  doi     = {10.1137/15M1009597},
  groups  = {IQC},
}

@InProceedings{Hu2017DissipativityTheoryNesterovs,
  author    = {Bin Hu and Laurent Lessard},
  booktitle = {Proceedings of the 34th International Conference on Machine Learning},
  title     = {Dissipativity Theory for {N}esterov's Accelerated Method},
  year      = {2017},
  editor    = {Precup, Doina and Teh, Yee Whye},
  month     = {06--11 Aug},
  pages     = {1549--1557},
  publisher = {PMLR},
  series    = {Proceedings of Machine Learning Research},
  volume    = {70},
  groups    = {IQC},
  pdf       = {http://proceedings.mlr.press/v70/hu17a/hu17a.pdf},
}

@Article{Megretski1997SystemAnalysisVia,
  author  = {Megretski, A. and Rantzer, A.},
  journal = {IEEE Transactions on Automatic Control},
  title   = {System Analysis Via Integral Quadratic Constraints},
  year    = {1997},
  number  = {6},
  pages   = {819-830},
  volume  = {42},
  doi     = {10.1109/9.587335},
  groups  = {IQC},
}

@InProceedings{Goujaud2023FundamentalProofStructures,
  author    = {Goujaud, Baptiste and Dieuleveut, Aymeric and Taylor, Adrien},
  booktitle = {2023 62nd IEEE Conference on Decision and Control (CDC)},
  title     = {On Fundamental Proof Structures in First-Order Optimization},
  year      = {2023},
  pages     = {3023-3030},
  doi       = {10.1109/CDC49753.2023.10383282},
  groups    = {PEP},
  keywords  = {Systematics;Optimization methods;Machine learning;Rendering (computer graphics);Task analysis;Convergence},
}

@InProceedings{Nesterov1983MethodUnconstrainedConvex,
  author    = {Nesterov, Yurii},
  booktitle = {Dokl. Akad. Nauk. SSSR},
  title     = {A method for unconstrained convex minimization problem with the rate of convergence O (1/k2)},
  year      = {1983},
  pages     = {543},
  volume    = {269},
  groups    = {FGM},
}

@Article{Nesterov1988ApproachConstructionOptimal,
  author  = {Nesterov, Yurii},
  journal = {Ekonomika i Mateaticheskie Metody},
  title   = {On an approach to the construction of optimal methods of minimization of smooth convex functions},
  year    = {1988},
  number  = {3},
  pages   = {509--517},
  volume  = {24},
  groups  = {FGM},
}

@Book{Nesterov2018LecturesConvexOptimization,
  author    = {Nesterov, Yurii},
  publisher = {Springer International Publishing},
  title     = {Lectures on Convex Optimization},
  year      = {2018},
  isbn      = {9783319915784},
  doi       = {10.1007/978-3-319-91578-4},
  groups    = {FGM},
  issn      = {1931-6836},
  journal   = {Springer Optimization and Its Applications},
}

@Article{Nesterov2005SmoothMinimizationNon,
  author  = {Nesterov, Yu.},
  journal = {Mathematical Programming},
  title   = {Smooth minimization of non-smooth functions},
  year    = {2005},
  issn    = {1436-4646},
  number  = {1},
  pages   = {127--152},
  volume  = {103},
  doi     = {10.1007/s10107-004-0552-5},
  groups  = {FGM},
  refid   = {Nesterov2005},
}

@Article{Liu2023AdaptiveStochasticOptimization,
  author  = {Liu, Yin and Davanloo Tajbakhsh, Sam},
  journal = {arXiv preprint arXiv:2306.07810},
  title   = {Adaptive Stochastic Optimization Algorithms for Problems with Biased Oracles},
  year    = {2023},
  groups  = {inexact oracle},
}

@Article{Gasnikov2018UniversalMethodStochastic,
  author   = {Gasnikov, A. V. and Nesterov, Yu. E.},
  journal  = {Computational Mathematics and Mathematical Physics},
  title    = {Universal Method for Stochastic Composite Optimization Problems},
  year     = {2018},
  issn     = {1555-6662},
  number   = {1},
  pages    = {48--64},
  volume   = {58},
  abstract = {A fast gradient method requiring only one projection is proposed for smooth convex optimization problems. The method has a visual geometric interpretation, so it is called the method of similar triangles (MST). Composite, adaptive, and universal versions of MST are suggested. Based on MST, a universal method is proposed for the first time for strongly convex problems (this method is continuous with respect to the strong convexity parameter of the smooth part of the functional). It is shown how the universal version of MST can be applied to stochastic optimization problems.},
  doi      = {10.1134/S0965542518010050},
  groups   = {FGM},
  refid    = {Gasnikov2018},
  url      = {https://doi.org/10.1134/S0965542518010050},
}

@Article{Berahas2022TheoreticalEmpiricalComparison,
  author   = {Berahas, Albert S. and Cao, Liyuan and Choromanski, Krzysztof and Scheinberg, Katya},
  journal  = {Foundations of Computational Mathematics},
  title    = {A Theoretical and Empirical Comparison of Gradient Approximations in Derivative-Free Optimization},
  year     = {2022},
  issn     = {1615-3383},
  number   = {2},
  pages    = {507--560},
  volume   = {22},
  abstract = {In this paper, we analyze several methods for approximating gradients of noisy functions using only function values. These methods include finite differences, linear interpolation, Gaussian smoothing, and smoothing on a sphere. The methods differ in the number of functions sampled, the choice of the sample points, and the way in which the gradient approximations are derived. For each method, we derive bounds on the number of samples and the sampling radius which guarantee favorable convergence properties for a line search or fixed step size descent method. To this end, we use the results in Berahas et al. (Global convergence rate analysis of a generic line search algorithm with noise, arXiv:1910.04055, 2019) and show how each method can satisfy the sufficient conditions, possibly only with some sufficiently large probability at each iteration, as happens to be the case with Gaussian smoothing and smoothing on a sphere. Finally, we present numerical results evaluating the quality of the gradient approximations as well as their performance in conjunction with a line search derivative-free optimization algorithm.},
  doi      = {10.1007/s10208-021-09513-z},
  groups   = {Examples},
  refid    = {Berahas2022},
  url      = {https://doi.org/10.1007/s10208-021-09513-z},
}

@Article{Ghadimi2018ApproximationMethodsBilevel,
  author  = {Ghadimi, Saeed and Wang, Mengdi},
  journal = {arXiv preprint arXiv:1802.02246},
  title   = {Approximation methods for bilevel programming},
  year    = {2018},
  groups  = {Examples},
}

@article{polyak1964some,
  title={Some methods of speeding up the convergence of iteration methods},
  author={Polyak, Boris T},
  journal={Ussr computational mathematics and mathematical physics},
  volume={4},
  number={5},
  pages={1--17},
  year={1964},
  publisher={Elsevier}
}

@article{zoll2026inexactly,
  title={Inexactly Smooth Performance Estimation and New Optimized Gradient Methods},
  author={Zoll, Aaron and Grimmer, Benjamin},
  journal={arXiv preprint arXiv:2606.01505},
  year={2026}
}
\newpage
\begin{appendices}
\section{Supplementary lemmas and proofs }

\subsection{Details on motivational examples}\label{sec:detail_example}
\paragraph{Bilevel optimization.} Consider the bilevel optimization problem
	\begin{align*}
		\min_\bx & \quad f(\bx;\by^*(\bx))                    \\
		\st      & \quad \by^*(\bx) = \argmin_\by g(\bx,\by),
	\end{align*}
	with \(f\) being continuously differentiable and \(g\) being two times continuously differentiable and strongly convex functions. To solve the problem with a gradient-based method, we first obtain the gradient with respect to \(\bx\) as
	\begin{align*}
		\grad f(\bx;\by^*(\bx)) & = \grad_\bx f(\bx;\by^*(\bx)) - \grad^2_{\bx\by} g(\bx,\by^*(\bx))[\grad^2_{\by\by} g(\bx,\by^*(\bx))]^{-1} \grad_\by f(\bx;\by^*(\bx)) \\
		                        & = \grad_\bx f(\bx;\by^*(\bx)) - \grad \by^*(\bx)^\top \grad_\by f(\bx;\by^*(\bx)).
	\end{align*}
	This requires solving the lower-level problem to optimality, which is unattainable in many scenarios. When the lower-level problem is solved to a suboptimal point \(\tilde{\by}(\bx)\) and the gradient is evaluated at this point instead of \(\by^*(\bx)\), the resulting gradient is inexact:
	\begin{equation*}
		\grad f(\bx;\by^*(\bx)) \neq \tilde{\grad}f(\bx,\tilde{\by}(\bx)) := \grad_\bx f(\bx,\tilde{\by}(\bx)) - \grad^2_{\bx\by}g(\bx,\tilde{\by}(\bx))[\grad^2_{\by\by} g(\bx,\tilde{\by}(\bx))]^{-1} \grad_\by f (\bx;\tilde{\by}(\bx)).
	\end{equation*}
	Under some continuity assumptions, the gradient error is shown to be bounded as
	\begin{equation*}
		\norm{\tilde{\grad}f(\bx;\tilde{\by}(\bx)) - \grad f(\bx;\by^*(\bx))} \leq C\norm{ \tilde{\by}(\bx) - \by^*(\bx)},
	\end{equation*}
	where \(C := L_{f_\bx} + \frac{L_{f_\by}C_{g_{\bx\by}}}{\mu_g} + C_{f_\by} \left(\frac{L_{g_\bx\by}}{\mu_g} + \frac{L_{g_{\by\by}} C_{g_{\bx\by}}}{\mu_g^2}\right)\), with \(L_{f_\bx}, L_{f_\by}\) are the Lipschitz continuity constants of \(\grad _\bx f(\bx;\by)\) and \(\grad_\by f(\bx;\by)\), respectively; \(L_{g_{\bx\by}}, L_{g_{\by\by}}\) are the Lipschitz constants of \(\grad^2_{\bx\by}g(\bx,\by)\) and \(\grad^2_{\by\by}g(\bx,\by)\) with respect to (w.r.t.) \(\by\), respectively; \(C_{f_\by}\), \(C_{g_{\bx\by}}\) are the upper bounds on \(\norm{\grad_\by f(\bx;\by)}\) and \(\norm{\grad^2_{\bx\by}g(\bx,\by)}\), respectively; and \(\mu_g\) is the strong convexity constant of \(g(\bx,\by)\) w.r.t. \(\by\)-- see~\cite{Ghadimi2018ApproximationMethodsBilevel} for details.

	Now let \(\tilde{\by}_k(\bx)\) be the solution of the lower-level problem by the gradient descent method with stepsize \(\frac{2}{\mu_g + L_g}\) after \(k\) iterations. From the {iteration complexity} of GD for the class \(\cF_{\mu,L}\), the gradient inexactness can be bounded as
	\begin{equation*}
		\norm{\tilde{\grad}f(\bx;\tilde{\by}_k(\bx)) - \grad f(\bx;\by^*(\bx))} \leq C \left(\frac{Q_g - 1}{Q_g + 1}\right)^k \norm{\by_0 - \by^*(\bx)},
	\end{equation*}
	where \(Q_g := L_g/\mu_g\). The above bound follows an exponential decay and represents the error bound in \eqref{eq:inexact-oracle} with \(\eta\) being the iteration number \(k\).

\paragraph{Composition optimization.}
	Consider the nested composition optimization
	\begin{equation*}
		\min_{\bx} \ f(\bx) := h(\bg(\bx)) \quad \text{with} \quad \bg(\bx) := \frac{1}{N} \sum_{i=1}^N \bg_i(\bx),
	\end{equation*}
    where \(h:\mR^k\rightarrow\mR\) and \(\bg:\mR^d\rightarrow\mR^k\) are continuously differentiable, and \(N\) is a large positive integer. From the chain rule, the exact gradient is \(\grad f(\bx) = \grad \bg(\bx) \grad h(\bg(\bx))\). However, assume one can only approximate \(\bg\) and \(\grad \bg\) through sampling as
	\begin{equation*}
		\tilde{\bg}(\bx) := \frac{1}{\abs{\cS_\bg}}\sum_{i=1}^{\abs{\cS_\bg}} \bg_{\cS_\bg[i]}(\bx) \quad \tilde{\grad}\bg(\bx) := \frac{1}{\abs{\cS_\grad}}\sum_{i=1}^{\abs{\cS_\grad}} \grad \bg_{\cS_\grad[i]} (\bx),
	\end{equation*}
	where \(\cS_{\bg}\), \(\cS_{\grad}\) are two uniformly sampled subsets of \(\seq{1,2,\cdots,N}\).

	\begin{lemma}\label{lem:composition-error-bound}
		Assume for any \(\bv\) and \(\bx\), \(\norm{\grad h(\bv)} \leq C_h\) and \(\norm{\grad \bg(\bx)} \leq C_g\). Also, \(h(\bv)\) is Lipschitz smooth with constant \(L_h\), \(\mE_i[\norm{\grad \bg_i(\bx) - \grad \bg(\bx)}^2] \leq \sigma^2_\grad\) and \(\mE[\norm{\bg_i(\bx) - \bg(\bx)}^2] \leq \sigma^2_\bg\). Assume \(\abs{\cS_\grad}\) and \(\abs{\cS_\bg}\) are properly chosen so \(\frac{2C_h^2}{b^2\abs{\cS_\grad}} \sigma_\grad^2 + \frac{2C_g^2 L_h^2}{b^2\abs{\cS_\bg}} \sigma_\bg^2 \leq 1\), then the error of the estimated gradient is bounded as \[
			\norm{\tilde{\grad} \bg(\bx) \grad h(\tilde{\bg}(\bx)) - \grad \bg(\bx) \grad h(\bg(\bx))}^2 \leq b^2,
		\]
		with probability at least
		\(1 - \frac{2C_h^2}{b^2\abs{\cS_\grad}} \sigma_\grad^2 - \frac{2C_g^2 L_h^2}{b^2\abs{\cS_\bg}} \sigma_\bg^2\).
	\end{lemma}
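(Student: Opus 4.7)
The plan is to decompose the total error through an add-and-subtract trick, bound each resulting piece deterministically via the boundedness and smoothness hypotheses, and then convert an expected-squared-error bound into a high-probability bound via Markov's inequality.

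First I would insert $\pm \grad \bg(\bx) \grad f(\tilde{\bg}(\bx))$ to write
\[
\tilde{\grad}\bg(\bx)\grad f(\tilde{\bg}(\bx)) - \grad\bg(\bx)\grad f(\bg(\bx)) = [\tilde{\grad}\bg(\bx) - \grad\bg(\bx)]\grad f(\tilde{\bg}(\bx)) + \grad\bg(\bx)[\grad f(\tilde{\bg}(\bx)) - \grad f(\bg(\bx))].
\]
Applying the elementary inequality $\norm{a+b}^2 \leq 2\norm{a}^2 + 2\norm{b}^2$, followed by the submultiplicativity of the operator norm, the first term is bounded by $2C_f^2\,\norm{\tilde{\grad}\bg(\bx) - \grad\bg(\bx)}^2$ using $\norm{\grad f(\tilde{\bg}(\bx))} \leq C_f$, and the second term is bounded by $2C_g^2 L_f^2\,\norm{\tilde{\bg}(\bx) - \bg(\bx)}^2$ using $\norm{\grad\bg(\bx)} \leq C_g$ together with $L_f$-smoothness of $f$.

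Next I would take expectation over the random subsets $\cS_\grad$ and $\cS_\bg$. Interpreting $\tilde{\grad}\bg$ and $\tilde{\bg}$ as the natural sample-average estimators, the variance of an average of $\abs{\cS_\grad}$ (resp.\ $\abs{\cS_\bg}$) independent draws satisfying the hypothesized per-sample variance bounds gives
\[
\mE\bigl[\norm{\tilde{\grad}\bg(\bx) - \grad\bg(\bx)}^2\bigr] \leq \tfrac{\sigma_\grad^2}{\abs{\cS_\grad}}, \qquad \mE\bigl[\norm{\tilde{\bg}(\bx) - \bg(\bx)}^2\bigr] \leq \tfrac{\sigma_\bg^2}{\abs{\cS_\bg}}.
\]
Combining these with the previous deterministic bounds yields
\[
\mE\bigl[\norm{\tilde{\grad}\bg(\bx)\grad f(\tilde{\bg}(\bx)) - \grad\bg(\bx)\grad f(\bg(\bx))}^2\bigr] \leq \tfrac{2C_f^2 \sigma_\grad^2}{\abs{\cS_\grad}} + \tfrac{2C_g^2 L_f^2 \sigma_\bg^2}{\abs{\cS_\bg}}.
\]

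Finally, applying Markov's inequality to the nonnegative random variable $\norm{\tilde{\grad}\bg(\bx)\grad f(\tilde{\bg}(\bx)) - \grad\bg(\bx)\grad f(\bg(\bx))}^2$ with threshold $b^2$ immediately gives the advertised high-probability statement; the hypothesis $\tfrac{2C_f^2 \sigma_\grad^2}{b^2 \abs{\cS_\grad}} + \tfrac{2C_g^2 L_f^2 \sigma_\bg^2}{b^2 \abs{\cS_\bg}} \leq 1$ ensures the resulting probability is nonnegative. The only mildly delicate point is making the estimator normalization explicit (a sample mean rather than a sum) so that the variance bounds scale as $1/\abs{\cS_\grad}$ and $1/\abs{\cS_\bg}$; everything else is just inequalities.
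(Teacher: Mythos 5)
Your proposal is correct and follows essentially the same route as the paper's proof: the same add-and-subtract decomposition with $\grad\bg(\bx)\grad f(\tilde{\bg}(\bx))$, Young's inequality plus the bounds $C_f$, $C_g$, $L_f$, then taking expectation and applying Markov's inequality (you apply Markov directly at threshold $b^2$, whereas the paper introduces an intermediate parameter $\epsilon$, but these are equivalent). Your remark about making the sample-mean normalization explicit is a fair clarification of a step the paper glosses over, not a different argument.
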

	\begin{proof}{Proof}
		By Young's inequality,
		\begin{align*}
			     & \norm{\tilde{\grad} \bg(\bx) \grad h(\tilde{\bg}(\bx)) - \grad \bg(\bx) \grad h(\bg(\bx))}^2                                                                                                  \\
			=    & \norm{\tilde{\grad} \bg(\bx) \grad h(\tilde{\bg}(\bx)) - \grad \bg(\bx) \grad h(\tilde{\bg}(\bx)) + \grad \bg(\bx) \grad h(\tilde{\bg}(\bx)) - \grad \bg(\bx) \grad h(\bg(\bx))}^2            \\
			\leq & 2\norm{\tilde{\grad} \bg(\bx) \grad h(\tilde{\bg}(\bx)) - \grad \bg(\bx) \grad h(\tilde{\bg}(\bx))}^2 + 2\norm{\grad \bg(\bx) \grad h(\tilde{\bg}(\bx)) - \grad \bg(\bx) \grad h(\bg(\bx))}^2 \\
			\leq & 2 \norm{\grad h(\tilde{\bg}(\bx))}^2 \norm{\tilde{\grad}\bg(\bx) - \grad \bg(\bx)}^2 + 2 \norm{\grad \bg(\bx)}^2 \norm{\grad h(\tilde{\bg}(\bx)) - \grad h (\bg(\bx))}^2                      \\
			\leq & 2 C_h^2 \norm{\tilde{\grad}\bg(\bx) - \grad \bg(\bx)}^2 + 2 C_g^2 L_h^2 \norm{\tilde{\bg}(\bx) - \bg(\bx)}^2.
		\end{align*}
		Taking the expectation on both sides and with the bounded variance assumptions, we have
		\begin{align*}
			      \mE[\norm{\tilde{\grad} \bg(\bx) \grad h(\tilde{\bg}(\bx)) - \grad \bg(\bx) \grad h(\bg(\bx))}^2]
			\leq  \frac{2C_h^2}{\abs{\cS_\grad}} \sigma_\grad^2 + \frac{2C_g^2 L_h^2}{\abs{\cS_\bg}} \sigma_\bg^2.
		\end{align*}
		By Markov's inequality, we have
		\begin{align*}
			     & \Pr\left(\norm{\tilde{\grad} \bg(\bx) \grad h(\tilde{\bg}(\bx)) - \grad \bg(\bx) \grad h(\bg(\bx))}^2 \geq \epsilon \left(\frac{2C_h^2}{\abs{\cS_\grad}} \sigma_\grad^2 + \frac{2C_g^2 L_h^2}{\abs{\cS_\bg}} \sigma_\bg^2 \right)\right)        \\
			\leq & \frac{1}{\epsilon} \mE\left[\norm{\tilde{\grad} \bg(\bx) \grad h(\tilde{\bg}(\bx)) - \grad \bg(\bx) \grad h(\bg(\bx))}^2\right] / \left(\frac{2C_h^2}{\abs{\cS_\grad}} \sigma_\grad^2 + \frac{2C_g^2 L_h^2}{\abs{\cS_\bg}} \sigma_\bg^2 \right)\leq  \frac{1}{\epsilon}.
		\end{align*}
		Assuming \(\epsilon > 1\), i.e., the bound holds non-triviality, we can conclude  that
		\[
			\norm{\tilde{\grad} \bg(\bx) \grad h(\tilde{\bg}(\bx)) - \grad \bg(\bx) \grad h(\bg(\bx))}^2 \leq \epsilon \left(\frac{2C_h^2}{\abs{\cS_\grad}} \sigma_\grad^2 + \frac{2C_g^2 L_h^2}{\abs{\cS_\bg}} \sigma_\bg^2 \right)
		\]
		holds with probability at least \(1 - \frac{1}{\epsilon}\).	\Halmos \end{proof}

\subsection{Convergence bound for iGFGM}\label{appx:iGFGM}

\begin{lemma}
    The sequence \(\seq{\bx_k}\) generated by \cref{alg:iGFGM} is the same sequence as the one generated by iGFO (\(\hat{\bx}_k = \bx_0 - \frac{1}{L}\sum_{i=0}^{k-1}\theta_{k,i} \tilde{\grad} f(\hat{\bx}_i)\)) with
    \begin{equation*}
        \theta_{k,i} = \frac{\alpha_i (A_k - A_i)+A_i}{A_k}.
    \end{equation*}
\end{lemma}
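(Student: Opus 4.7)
\medskip

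The plan is to proceed by induction on $k$, unrolling the two auxiliary sequences $\{\by_k\}$ and $\{\bz_k\}$ so that $\bx_k$ is written purely as an affine combination of $\bx_0$ and the past inexact gradients $\tilde{\grad}f(\bx_0),\dots,\tilde{\grad}f(\bx_{k-1})$, and then matching coefficients against the claimed $\theta_{k,i}$.

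First, observe that the $\bz$-recursion in Step~3 telescopes cleanly: since $\bz_0=\bx_0$, a direct summation gives $\bz_{k+1} = \bx_0 - \frac{1}{L}\sum_{i=0}^{k}\alpha_i\,\tilde{\grad}f(\bx_i)$ for every $k\ge 0$, so $\bz_{k+1}$ is already in the desired form with coefficients $\alpha_i$. The induction hypothesis is that $\bx_k = \bx_0 - \frac{1}{L}\sum_{i=0}^{k-1}\theta_{k,i}\,\tilde{\grad}f(\bx_i)$ with $\theta_{k,i}=\frac{\alpha_i(A_k-A_i)+A_i}{A_k}$; the base case $k=0$ is trivial (empty sum), and a sanity check at $k=1$ using $\alpha_0=A_0=1$ recovers $\theta_{1,0}=1$, matching the explicit computation from Steps 2--5.

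For the inductive step, Step~2 gives $\by_{k+1}=\bx_k - \frac{1}{L}\tilde{\grad}f(\bx_k)$, which combined with the hypothesis expresses $\by_{k+1}$ in the desired affine form. Then Step~5 yields
\begin{align*}
\bx_{k+1} &= \Bigl(1-\tfrac{\alpha_{k+1}}{A_{k+1}}\Bigr)\by_{k+1} + \tfrac{\alpha_{k+1}}{A_{k+1}}\bz_{k+1} \\
&= \bx_0 - \tfrac{1}{L}\sum_{i=0}^{k-1}\Bigl[\tfrac{A_k}{A_{k+1}}\theta_{k,i} + \tfrac{\alpha_{k+1}}{A_{k+1}}\alpha_i\Bigr]\tilde{\grad}f(\bx_i) - \tfrac{1}{L}\Bigl[\tfrac{A_k}{A_{k+1}} + \tfrac{\alpha_{k+1}\alpha_k}{A_{k+1}}\Bigr]\tilde{\grad}f(\bx_k),
\end{align*}
after using $1-\frac{\alpha_{k+1}}{A_{k+1}}=\frac{A_k}{A_{k+1}}$. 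What remains is to verify the two coefficient identities: $\frac{A_k}{A_{k+1}}\theta_{k,i}+\frac{\alpha_{k+1}\alpha_i}{A_{k+1}}=\theta_{k+1,i}$ for $0\le i\le k-1$, and $\frac{A_k+\alpha_{k+1}\alpha_k}{A_{k+1}}=\theta_{k+1,k}$. Substituting the definition of $\theta_{k,i}$, the former reduces to $\frac{\alpha_i(A_k-A_i)+A_i + \alpha_{k+1}\alpha_i}{A_{k+1}} = \frac{\alpha_i(A_{k+1}-A_i)+A_i}{A_{k+1}}$, which follows from $A_{k+1}=A_k+\alpha_{k+1}$; the latter follows from the same identity together with $A_k=\alpha_k(A_k-A_k)+A_k$.

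There is no real obstacle here: the work is entirely a bookkeeping computation where the telescoping structure of $\bz_k$ and the convex combination in Step~5 conspire so that the recursion on $\theta_{k,i}$ in the $i$-th coordinate is exactly the algebraic identity $\alpha_i(A_{k+1}-A_i)+A_i = \alpha_i(A_k-A_i)+A_i+\alpha_i\alpha_{k+1}$. The only mild care needed is to treat the newly appearing index $i=k$ separately from $0\le i\le k-1$, which is why the verification is split into the two cases above.
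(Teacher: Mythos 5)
Your proof is correct and follows essentially the same route as the paper: induction on $k$, telescoping $\bz_{k+1}=\bx_0-\frac{1}{L}\sum_{i=0}^{k}\alpha_i\tilde{\grad}f(\bx_i)$, substituting the inductive expression for $\bx_k$ into the convex combination of Step~5, and matching coefficients via $A_{k+1}=A_k+\alpha_{k+1}$. The only cosmetic difference is that you isolate the two coefficient identities (for $i\le k-1$ and $i=k$) while the paper carries out the same algebra in one chain of equalities.
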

\begin{proof}{Proof}
    The equivalence of these two sequences is shown through induction. For \(k=0\) and \(k=1\), the equivalence holds trivially.
    Assume \(\bx_i = \hat{\bx}_{i}\) for all \(i\leq k\) with \(k>1\), then
    \begin{align*}
        \bx_{k+1} = & \left(1 - \frac{\alpha_{k+1}}{A_{k+1}} \right) \left(\bx_k - \frac{1}{L}
        \tilde{\grad}f(\bx_k) \right) + \frac{\alpha_{k+1}}{A_{k+1}} \left(\bx_0 - \frac{1}{L} \sum_{i=0}^{k} \alpha_i \tilde{\grad}f(\bx_i)\right) \\
        = &\left(1 - \frac{\alpha_{k+1}}{A_{k+1}} \right) \left(\bx_0 - \frac{1}{L}\sum_{i=0}^{k-1}\theta_{k,i}\tilde{\grad}f(\bx_i) - \frac{1}{L}\tilde{\grad}f(\bx_k) \right) + \frac{\alpha_{k+1}}{A_{k+1}} \left(\bx_0 - \frac{1}{L} \sum_{i=0}^{k} \alpha_i \tilde{\grad}f(\bx_i)\right) \\
        = & \bx_0 - \frac{1}{L} \frac{A_{k+1} - \alpha_{k+1}}{A_{k+1}} \sum_{i=0}^{k-1}\frac{\alpha_i(A_k - A_i) +A_i}{A_k}\tilde{\grad}f(\bx_i) - \frac{1}{L}\frac{\alpha_{k+1}}{A_{k+1}}\sum_{i=0}^{k-1}\alpha_i \tilde{\grad}f(\bx_i) \\
        & - \frac{1}{L}\left(1 - \frac{\alpha_{k+1}}{A_{k+1}}  + \frac{\alpha_{k+1}\alpha_k}{A_{k+1}}\right)\tilde{\grad}f(\bx_k) \\
        = & \bx_0 - \frac{1}{L}\sum_{i=0}^{k-1} \frac{\alpha_i(A_k - A_i) + A_i + (A_{k+1} - A_k) \alpha_i}{A_{k+1}}\tilde{\grad}f(\bx_i) - \frac{1}{L} \frac{\alpha_k \alpha_{k+1} + A_k}{A_{k+1}} \tilde{\grad}f(\bx_k) \\
        = & \bx_0 - \frac{1}{L}\sum_{i=0}^{k-1}\frac{\alpha_i(A_{k+1} - A_{i}) + A_i}{A_{k+1}} - \frac{1}{L} \frac{\alpha_k (A_{k+1} - A_k) + A_k}{A_{k+1}} \tilde{\grad}f(\bx_k)\\
        = & \bx_0 - \frac{1}{L}\sum_{i=0}^{k}\frac{\alpha_i(A_{k+1} - A_i) + A_i}{A_{k+1}}\tilde{\grad}f(\bx_i) \\
        =& \bx_0 - \frac{1}{L}\sum_{i=0}^k \theta_{k+1,i}\tilde{\grad}f(\bx_i)\\
        =& \hat{\bx}_{k+1}.
    \end{align*}
    Hence the sequences \(\seq{\bx_k}\) and \(\seq{\hat{\bx}_k}\) are equivalent to each other.
\Halmos \end{proof}

For iGFGM, after fixing \(\tau=L/(2A_K)\) and
\(v_{i,i+1}=A_i/A_K\), the same Schur complement reduction as in
\cref{lem:optimal-u-fixed-tv} yields a constraint of the form \( \bU-\bH\succeq0 \),
where \(\bH\in\mS^K\) denotes the corresponding Schur-complement matrix as defined \eqref{eq:H-definition}. This reduced condition allows us to identify the optimal choice of \(\bu\) for the accumulated-error term, as stated next.

\begin{lemma}[Optimal \(\bu\) for fixed \(\tau\) and \(\bv\) in iGFGM]\label{lem:optimal-u-fixed-tv-FGM}
Assume  \(2A_k-\alpha_k^2>0\), \(k=1,\cdots,K\), and \(b_i>0\), \(i=0,\cdots,K-1\). For the fixed analytical multipliers \(\tau=L/(2A_K)\), \(v_{i,i+1}=A_i/A_K\), and the corresponding \(\bv_*\), the remaining minimization over \(\bu\) is
\begin{equation*}
    \min_{\bu\geq\bo}\sum_{i=0}^{K-1}u_i b_i^2: \diag(\bu)-\bH\succeq 0.
\end{equation*}
Its optimal solution is
\begin{equation}
    \hat{u}_i=\frac{(\bH\bb)_i}{b_i}, \quad i=0,\cdots,K-1,
    \label{eq:optimal-u-FGM}
\end{equation}
where \(\bb=(b_0,\cdots,b_{K-1})^\top\). Moreover, if \(b_i\equiv b\), then
\begin{equation}
    \hat{u}_k = \frac{A_k^2 (1+\alpha_{k+1})}{2L A_K(2 A_{k+1} - \alpha_{k+1}^2)} + \sum_{i=k+1}^{K} \frac{\alpha_k A_{i-1}\alpha_i (1+\alpha_i)}{2LA_K(2A_i - \alpha_i^2)}, \quad k=0,\cdots,K-1.
    \label{eq:theoretical-u-FGM}
\end{equation}
\end{lemma}
The proof follows the same Schur complement reduction and KKT verification as in \cref{lem:optimal-u-fixed-tv}, and is omitted.

\section{Proofs of convergence upper bounds for iFGM and iOGM}
\subsection{Proof of \cref{thm:ifgm-K-square-error-bound}}\label{appx:iFGM-K2}
We will prove the result by constructing a feasible solution to the dual problem \eqref{prob:D}.
Recall that the dual PSD constraint has the block form
\begin{equation*}
	\bM = \begin{pmatrix*}[l]
		      \tau              & [\mathbf{0}]_{1\times K}    & [\bp]_{1 \times (K+1)}     \\
		      [\mathbf{0}]_{K \times 1}   & [\bU]_{K \times K}          & [\bB]_{K \times (K+1)}     \\
		      [\bp^\top]_{(K+1) \times 1} & [\bB^\top]_{(K+1) \times K} & [\bC]_{(K+1) \times (K+1)}
	      \end{pmatrix*},
\end{equation*}
where \(\bU=\diag(u_0,\ldots,u_{K-1})\). For any \(\tau>0\), define
\(
    \bC_\tau:=\bC-\frac1\tau\bp\bp^\top
\)
and
\[
    \bN_\tau:=
    \begin{pmatrix}
        \bU & \bB\\
        \bB^\top & \bC_\tau
    \end{pmatrix}.
\]
Then the Schur complement with respect to the leading scalar block (\(\tau\)-block) with \(\tau>0\) gives
\[
    \bM\succeq0
    \quad\Longleftrightarrow\quad
    \bN_\tau\succeq0,
\]
If, in addition, \(\bC_\tau\succ0\), then a second Schur complement gives
\[
    \bN_\tau\succeq0
    \quad\Longleftrightarrow\quad
    \bU-\bB\bC_\tau^{-1}\bB^\top\succeq0.
\]
Thus it suffices to choose \(\tau>0\) and \(\bu\geq \bo\) such that
\(\bC_\tau\succ0\) and
\(\bU-\bB\bC_\tau^{-1}\bB^\top\succeq0\). We construct such a pair in two
steps, first choose \(\tau\) to make \(\bC_\tau\succ0\), and then choose the
diagonal matrix \(\bU\) to dominate \(\bB\bC_\tau^{-1}\bB^\top\).

In the iFGM case, \(\lambda_k\equiv1\) and \(A_k=\alpha_k^2\). The
fixed-multiplier choice \(v_{i,i+1}=A_i/A_K\) in
\cref{lem:optimal-u-fixed-tv-FGM} gives an \(\cO(K^3b^2/L)\) accumulated-error
term when \(b_i\equiv b\). We obtain a tighter bound by choosing
\[
    v_{i,i+1} = 1 - \left(1 - \frac{A_i}{A_K}\right)^2, \quad i=0,\cdots,K-1.
\]
The equality constraint in \eqref{prob:D} then determines
\[
    v_{*,0}=v_{0,1},\qquad
    v_{*,i}=v_{i,i+1}-v_{i-1,i},\quad i=1,\ldots,K-1,
    \qquad v_{*,K}=1-v_{K-1,K}.
\]
Since \(v_{i,i+1}\) is increasing in \(i\) and \(v_{K-1,K}\leq 1\), all these
multipliers are nonnegative. With these new choices of multipliers, the Schur complement obtained after eliminating the \(\tau\)-block is not a diagonal matrix. We therefore identify a value of \(\tau\) so that \(\bC_\tau\succ 0\).

To begin with, we will show \(\bC \succ 0\). Define \(\bD_\alpha:=\diag(\alpha_0,\ldots,\alpha_K)\), with the convention \(A_{-1}=0\), a direct substitution of the above choice of \(\bv\) gives
\[
    \bC
    = \frac{1}{L A_K^2}\bD_\alpha \bW \bD_\alpha,
\]
where
\[
W_{ij}
= \left\{
\begin{array}{ll}
2A_K-A_{i-1},
& i=j,\\[1mm]
\frac{1}{2}\left(2A_K-A_{i-1}-A_{i-2}+A_{j-2}\right),
& i>j,\\[1mm]
W_{ji},
& j>i,
\end{array}
\right.
\qquad i,j=1,\ldots,K+1.
\]
As \(\bD_\alpha\) is positive definite, we only need to show \(\bW\succ 0\). Notice \(\bW\) can be decomposed into three parts as
\(
    \bW=\bP+\frac12\bQ+\bT,
\)
where 
\[
    P_{ij}=A_K-A_{\max\{i,j\}-1},
    \qquad Q_{ij}=A_{\min\{i,j\}-2},
    \qquad T_{ij}
    = \begin{cases}
        A_K-\dfrac12 A_{i-2},
        & i=j,\\[1mm]
        \dfrac12\alpha_{\max\{i,j\}-1},
        & i\ne j,
    \end{cases}
    \qquad i,j=1,\ldots,K+1.
\]
Since \(A_k=\sum_{i=0}^k\alpha_i\), for any \(\bz\in\mathbb R^{K+1}\), it holds
\[
    \bz^\top \bP \bz
    = \sum_{i=1}^K \alpha_i
    \left(\sum_{j=1}^{i}z_j\right)^2
    \geq 0,
    \qquad \bz^\top \bQ \bz
= \sum_{i=0}^{K-1}\alpha_i
\left(\sum_{j=i+2}^{K+1}z_j\right)^2
\geq 0.
\]
Therefore \(\bP\succeq0\) and \(\bQ\succeq0\).
It remains to show \(\bT\succ0\). For any \(\bz\in\mR^{K+1}\), by the definition
of \(\bT\), for any nozero \(\bz\), we have
\begin{align*}
\bz^\top\bT\bz
&= \sum_{i=1}^{K+1}\left(A_K-\frac12A_{i-2}\right)z_i^2
+ \sum_{1\leq j<i\leq K+1}\alpha_{i-1}z_j z_i  \\
&= \sum_{i=1}^{K+1}\left(A_K-\frac12A_{i-2}\right)z_i^2
+ \frac12\sum_{i=2}^{K+1}\alpha_{i-1}
\left[
\left(\sum_{j=1}^{i}z_j\right)^2
- \left(\sum_{j=1}^{i-1}z_j\right)^2
- z_i^2
\right] \\
&= \sum_{i=1}^{K+1}
\left(A_K-\frac12A_{i-2}-\frac12\alpha_{i-1}\right)z_i^2
+\frac12\alpha_K\left(\sum_{j=1}^{K+1}z_j\right)^2
-\frac12\sum_{i=1}^{K}
(\alpha_i-\alpha_{i-1})
\left(\sum_{j=1}^i z_j\right)^2 \\
&\geq \sum_{i=1}^{K+1}
\left(A_K-\frac12A_{i-2}-\frac12\alpha_{i-1}\right)z_i^2
+\frac12\alpha_K\left(\sum_{j=1}^{K+1}z_j\right)^2
-\frac12\sum_{i=1}^{K}
i(\alpha_i-\alpha_{i-1})
\sum_{j=1}^{i}z_j^2 \\
&\geq \sum_{i=1}^{K+1}
\left(A_K-\frac12A_{i-2}-\frac12\alpha_{i-1}\right)z_i^2
+\frac12\alpha_K\left(\sum_{j=1}^{K+1}z_j\right)^2
-\frac12A_K\sum_{j=1}^{K}z_j^2 \\
&= \frac12\sum_{i=1}^{K}
\left(A_K-A_{i-1}\right)z_i^2
+\frac12A_K z_{K+1}^2
+\frac12\alpha_K\left(\sum_{j=1}^{K+1}z_j\right)^2
\geq 0.
\end{align*}
The second equality uses
\(2z_i\sum_{j=1}^{i-1}z_j
= (\sum_{j=1}^{i}z_j)^2-(\sum_{j=1}^{i-1}z_j)^2-z_i^2\), and the first inequality follows from
\((\sum_{j=1}^{i}z_j)^2\leq i\sum_{j=1}^{i}z_j^2\), together with
\(\alpha_i-\alpha_{i-1}\geq0\). The second inequality uses
\(\sum_{i=1}^{K}i(\alpha_i-\alpha_{i-1})\leq A_K\), which follows from
\(A_K=\alpha_K^2\) and \(\alpha_K\geq (K+1)/2\). The last equality follows from
\(A_{i-1}=A_{i-2}+\alpha_{i-1}\) and \(A_K=A_{K-1}+\alpha_K\). The last inequality holds because \(A_K-A_{i-1}>0\) for \(i=1,\ldots,K\) and \(A_K>0\). Since the last inequality is strict unless \(\bz=0\), we have \(\bW\succ0\) and hence \(\bC\succ 0\).

Next, we will select \(\tau\) so that \(\bC_\tau = \bC-\frac1\tau \bp\bp^\top\succ 0\).
Since \(\bC\succ0\), the rank-one Schur complement condition tells that
\[
    \bC-\frac1\tau\bp\bp^\top \succ  0
    \quad\Longleftrightarrow\quad
    \tau > \bp^\top\bC^{-1}\bp .
\]
Therefore, it suffices to upper bound
\(
   \bp^\top\bC^{-1}\bp .
\)
Applying the weighted norm notation \(\|\by\|_{\bC}:=(\by^\top\bC\by)^{1/2}\), whose dual norm is
\(\|\bp\|_{\bC^{-1}}=(\bp^\top\bC^{-1}\bp)^{1/2}\), we can deduce
\[
    \bp^\top\bC^{-1}\bp
    = \sup_{\by\ne0}
    \frac{(\bp^\top\by)^2}{\by^\top\bC\by} = \frac{L}{4A_K^2} \sup_{\bz \neq 0}\frac{
    \left(\sum_{i=1}^{K+1}
    (2A_K-A_{i-1}-A_{i-2})z_i\right)^2
    }{\bz^\top\bW\bz},
\]
where in the second equality we replace \(\by\) with \(\bz=\bD_\alpha\by\), and use the definition of \(\bC\) and \(\bp\). Next, we bound the numerator and denominator separately.
For the denominator, since \(\bW=\bP+\frac12\bQ+\bT \succeq \bP + \bT\),
we have
\[
    \bz^\top\bW\bz
    \geq \sum_{i=1}^{K}\alpha_i\left(\sum_{j=1}^{i}z_j\right)^2
    + \frac12\alpha_K\left(\sum_{j=1}^{K+1}z_j\right)^2.
\]
Therefore, using \(\alpha_{i-1}\leq \alpha_i\), and
\(
    \sum_{i=1}^{K+1}(2A_K-A_{i-1}-A_{i-2})z_i
    = \sum_{i=1}^{K}(\alpha_{i-1}+\alpha_i)\sum_{j=1}^{i}z_j
    + \alpha_K\sum_{j=1}^{K+1}z_j
\), we can bound the numerator as
\begin{align*}
& \left(
    \sum_{i=1}^{K+1}(2A_K-A_{i-1}-A_{i-2})z_i
\right)^2
\leq 2\left(
    \sum_{i=1}^{K}(\alpha_{i-1}+\alpha_i)\sum_{j=1}^{i}z_j
\right)^2
+ 2\alpha_K^2\left(\sum_{j=1}^{K+1}z_j\right)^2 \\
& \leq 8A_K\sum_{i=1}^{K}\alpha_i\left(\sum_{j=1}^{i}z_j\right)^2
+ 2\alpha_K^2\left(\sum_{j=1}^{K+1}z_j\right)^2 \leq
8A_K\,\bz^\top\bW\bz .
\end{align*}
The last inequality holds because with \(\alpha_K^2=A_K=\sum_{i=0}^{K}\alpha_i\geq \alpha_K \geq 1\).
With the above bounds, we can conclude
\[
    \bp^\top\bC^{-1}\bp
    \leq \frac{L}{4A_K^2} 8A_K
    = \frac{2L}{A_K}.
\]
Choosing \(\tau=4L/A_K\), the preceding bound and the rank-one inequality
\(\bp\bp^\top\preceq(\bp^\top\bC^{-1}\bp)\bC\) give
\(\bC_\tau=\bC-\tau^{-1}\bp\bp^\top\succeq \frac12\bC\succ0\) and
\(\bC_\tau^{-1}\preceq2\bC^{-1}\).

Having completed the first Schur complement step, it remains to dominate
\(\bB\bC_\tau^{-1}\bB^\top\) by the diagonal matrix \(\bU\).
For any \(\bw\in\mR^K\),
\[
    \bw^\top\bB\bC_\tau^{-1}\bB^\top\bw
    \leq 2\bw^\top\bB\bC^{-1}\bB^\top\bw =
    2\sup_{\by\ne0}
    \frac{(\bw^\top\bB\by)^2}{\by^\top\bC\by}.
\]
Set \(\bz=\bD_\alpha\by\). By the definition of \(\bC\) and the decomposition
\(\bW=\bP+\frac12\bQ+\bT \succ \frac{1}{2}\bQ\), it holds that
\begin{equation}\label{eq:yCy-lowerbound}
    \by^\top\bC\by
    = \frac{1}{LA_K^2}\bz^\top\bW\bz
    \geq \frac{1}{2LA_K^2}
    \sum_{i=0}^{K-1}\alpha_i
    \left(\sum_{j=i+2}^{K+1}z_j\right)^2 .
\end{equation}
For the numerator, since \(y_j=z_j/\alpha_{j-1}\), for \(i=1,\ldots,K\), a direct substitution gives
\begin{align*}
    & \abs{(\bB\by)_i}
    = \frac{\alpha_{i-1}}{2LA_K^2}
    \abs{
        \frac{(\alpha_{i-1}+\alpha_i)(2A_K-A_i)}{\alpha_i}z_{i+1}
        + \sum_{j=i+2}^{K+1}
        \left(
            2A_K+A_{i-2}-A_{j-2}-A_{j-1}
        \right)z_j
    } \\
    & \leq \frac{\alpha_{i-1}}{2LA_K^2}
    \abs{
        \frac{(\alpha_{i-1}+\alpha_i)(2A_K-A_i)}{\alpha_i}
        \sum_{j=i+1}^{K+1}z_j
    } + \frac{\alpha_{i-1}}{2LA_K^2}
    \abs{
        \sum_{j=i+3}^{K+1}
        (\alpha_{j-2}+\alpha_{j-1})
        \sum_{\ell=j}^{K+1}z_\ell
    } \\
    &\quad+
    \frac{\alpha_{i-1}}{2LA_K^2}
    \abs{
        \left[
            2A_K+A_{i-2}-A_i-A_{i+1}
            - \frac{(\alpha_{i-1}+\alpha_i)(2A_K-A_i)}{\alpha_i}
        \right]
        \sum_{j=i+2}^{K+1}z_j
    } \\
    &\leq \frac{2\alpha_{i-1}}{LA_K}
    \abs{\sum_{j=i+1}^{K+1}z_j}
    + \frac{\alpha_{i-1}}{2LA_K^2}
    \sum_{j=i+3}^{K+1}
    (\alpha_{j-2}+\alpha_{j-1})
    \abs{\sum_{\ell=j}^{K+1}z_\ell}+
    \frac{2\alpha_{i-1}}{LA_K}
    \abs{\sum_{j=i+2}^{K+1}z_j}  .
\end{align*}
Then, we can continue to derive
\begin{align*}
    \sum_{i=1}^{K}\frac{(\bB\by)_i^2}{\alpha_i}
    &\leq \frac{12}{L^2A_K^2}
    \sum_{i=1}^{K}\alpha_{i-1}
    \left(\sum_{j=i+1}^{K+1}z_j\right)^2
    + \frac{12}{L^2A_K^2}
    \sum_{i=1}^{K}\alpha_i
    \left(\sum_{j=i+2}^{K+1}z_j\right)^2 \\
    &\quad+
    \frac{3}{2L^2A_K^3}
    \sum_{i=1}^{K}\alpha_{i-1}
    \sum_{j=i+3}^{K+1}
    (\alpha_{j-2}+\alpha_{j-1})
    \left(\sum_{\ell=j}^{K+1}z_\ell\right)^2 \\
    &\leq \frac{27}{L^2A_K^2}
    \sum_{i=1}^{K}\alpha_{i-1}
    \left(\sum_{j=i+1}^{K+1}z_j\right)^2 .
\end{align*}
Therefore, by Cauchy--Schwarz inequality,
\begin{align*}
    (\bw^\top\bB\by)^2
    &\leq \left(\sum_{i=1}^{K}\alpha_iw_i^2\right)
    \left(\sum_{i=1}^{K}\frac{(\bB\by)_i^2}{\alpha_i}\right) \leq
    \frac{27}{L^2A_K^2}
    \left(\sum_{i=1}^{K}\alpha_iw_i^2\right)
    \left[
    \sum_{i=0}^{K-1}\alpha_i
    \left(\sum_{j=i+2}^{K+1}z_j\right)^2
    \right].
\end{align*}
Combining this with \eqref{eq:yCy-lowerbound}, we get
\[
   \bw^\top\bB\bC_\tau^{-1}\bB^\top\bw
   \leq 2 \sup_{\by\ne0}
    \frac{(\bw^\top\bB\by)^2}{\by^\top\bC\by}
    \leq \frac{108}{L}
    \sum_{i=1}^{K}\alpha_iw_i^2 .
\]
Thus, taking \(c_{\rm H}=54\) and choosing
\[
    u_i:=\frac{2c_{\rm H}}{L}\alpha_{i+1},
    \qquad  i=0,\ldots,K-1,
\]
we have \(\bU-\bB\bC_\tau^{-1}\bB^\top\succeq0\).
And the final convergence bound can be obtained by substituting the above choices of
\(\tau\) and \(\bu\) into the dual objective function. \Halmos

\subsection{Proof of \cref{thm:iogm-K-square-error-bound}}\label{appx:iOGM-K2}
We use the same multipliers as in \cref{thm:ifgm-K-square-error-bound}.
Compared with iFGM, the only difference is the algorithm parameters \(\theta_{k,i}\) that determine the blocks \(\bB\) and \(\bC\) of \(\bM\) in \eqref{eq:matrix-M}.
Let \(\bM^{\rm O}\), \(\bC^{\rm O}\) and \(\bB^{\rm O}\) denote the corresponding iOGM dual matrix and blocks, we choose
\[
    \tau=\frac{4L}{A_K},
    \qquad u_i=\frac{8c_{\rm H}}{L}\alpha_{i+1},
    \quad i=0,\ldots,K-1,
\]
where \(c_{\rm H}=54\) is the constant used in
\cref{thm:ifgm-K-square-error-bound}.  It remains to prove that
\(\bM^{\rm O}\succeq0\). Using the same \(\bD_\alpha\) as in the iFGM case, we can express \(\bC^{\rm O}\) as
\(
    \bC^{\rm O}
    = \frac{1}{LA_K^2}\bD_\alpha \bW^{\rm O}\bD_\alpha,
\)
where
\[
W^{\rm O}_{ij}
= \left\{
\begin{array}{ll}
2A_K-A_{i-1},
& i=j,\\[1mm]
2A_K-A_{i-1}-A_{i-2}+A_{j-2}+\dfrac12\alpha_{j-1},
& i>j,\\[1mm]
W^{\rm O}_{ji},
& j>i,
\end{array}
\right.
\]
We first show that \(\bC^{\rm O}\succ0\) and then prove \(\bC_\tau^{\rm O}:=\bC^{\rm O}-\tau^{-1}\bp\bp^\top\succ0\). Here
\(\bW^{\rm O}\) is decomposed into two parts as
\(
    \bW^{\rm O}=\bR+\bH,
\)
where
\[
R_{ij}
= \begin{cases}
2A_K-A_{i-1}-A_{i-2}-\dfrac12\alpha_{i-1},
& i=j,\\[1mm]
2A_K-A_{\max\{i,j\}-1}-A_{\max\{i,j\}-2},
& i\ne j,
\end{cases}
\]
and
\[
H_{ij}
= A_{\min\{i,j\}-2}
+ \frac12\alpha_{\min\{i,j\}-1},
\qquad i,j=1,\ldots,K+1.
\]
For any \(\bz\in\mathbb R^{K+1}\), a direct calculation proves \(\bR\succeq0\) as
\[
\begin{aligned}
    \bz^\top\bR\bz
    &= \frac12 z_1^2
    + \frac12\sum_{i=1}^{K}
    \alpha_i
    \left(
        \sum_{j=1}^{i}z_j
        + \sum_{j=1}^{i+1}z_j
    \right)^2  \geq 0.
\end{aligned}
\]
Similarly, we can also show bound \(\bH\) through
\[
    \bz^\top\bH\bz
    = \frac{\alpha_0}{2}
    \left(\sum_{j=1}^{K+1}z_j\right)^2
    + \frac12\sum_{i=2}^{K+1}
    (\alpha_{i-2}+\alpha_{i-1})
    \left(\sum_{j=i}^{K+1}z_j\right)^2  \geq 0.
\]
Every coefficient on the right-hand side is positive.  If the right-hand side
is zero, then
\[
    \sum_{j=K+1}^{K+1}z_j=0,\quad
    \sum_{j=K}^{K+1}z_j=0,\quad \ldots,\quad
    \sum_{j=1}^{K+1}z_j=0,
\]
which implies \(z_{K+1}=z_K=\cdots=z_1=0\),  hence \(\bH\succ0\) and \(\bC^{\rm O}\succ0\). Next, we will bound \(\bp^\top(\bC^{\rm O})^{-1}\bp\). The calculation is similar to the iFGM case, and we can derive
\[
    \bp^\top(\bC^{\rm O})^{-1}\bp
    = \frac{L}{4A_K^2}
    \sup_{\bz\ne0}
    \frac{
    \left(
        \sum_{i=1}^{K+1}
        (2A_K-A_{i-1}-A_{i-2})z_i
    \right)^2
    }{
        \bz^\top\bW^{\rm O}\bz
    } \leq \frac{L}{2A_K}\sup_{\bz\ne0} \frac{ \bz^\top \bR \bz}{\bz^\top\bW^{\rm O}\bz} \leq \frac{L}{2A_K}.
\]
With \(\tau=4L/A_K\), we can conclude
\[
    \bC_\tau^{\rm O}
    = \bC^{\rm O}-\frac1\tau\bp\bp^\top
    \succeq
    \frac78\bC^{\rm O}
    \succ0 .
\]

It remains to dominate
\(\bB^{\rm O}(\bC_\tau^{\rm O})^{-1}(\bB^{\rm O})^\top\) by \(\bU\).
From the previous derivation, we know
\(
    (\bC_\tau^{\rm O})^{-1}
    \preceq
    \frac87(\bC^{\rm O})^{-1}.
\)
Therefore, for any \(\bw\in\mathbb R^K\),
\[
    \bw^\top \bB^{\rm O}(\bC_\tau^{\rm O})^{-1}
    (\bB^{\rm O})^\top\bw
    \leq \frac87
    \sup_{\by\ne0}
    \frac{(\bw^\top\bB^{\rm O}\by)^2}
         {\by^\top\bC^{\rm O}\by}.
\]
With \(\bz=\bD_\alpha\by\), we can write
\[
    \by^\top\bC^{\rm O}\by
    = \frac{1}{LA_K^2}\bz^\top\bW^{\rm O}\bz \geq \frac{1}{LA_K^2} \bz^\top \bH \bz
    \geq \frac{1}{2LA_K^2}
    \sum_{i=0}^{K-1}
    \alpha_i
    \left(
        \sum_{j=i+2}^{K+1}z_j
    \right)^2 .
\]
Moreover, following the same technique used in \cref{thm:ifgm-K-square-error-bound}, yields the uniform bound
\[
    \sum_{i=1}^{K}
    \frac{((\bB^{\rm O}\by)_i)^2}{\alpha_i}
    \leq \frac{189}{L^2A_K^2}
    \sum_{i=0}^{K-1}
    \alpha_i
    \left(
        \sum_{j=i+2}^{K+1}z_j
    \right)^2 .
\]
Continue the rest calculation, we will have
\[
    \sup_{\by\ne0}
    \frac{(\bw^\top\bB^{\rm O}\by)^2}
         {\by^\top\bC^{\rm O}\by}
    \leq \frac{378}{L}
    \sum_{i=1}^{K}\alpha_i w_i^2 .
\]
Consequently,
\[
\begin{aligned}
    \bw^\top \bB^{\rm O}(\bC_\tau^{\rm O})^{-1}
    (\bB^{\rm O})^\top\bw
    &\leq \frac87\cdot \frac{378}{L}
    \sum_{i=1}^{K}\alpha_i w_i^2     =
    \frac{432}{L}
    \sum_{i=1}^{K}\alpha_i w_i^2,
\end{aligned}
\]
As \(c_{\rm H}=54\), choosing
\(
    u_i=\frac{8c_{\rm H}}{L}\alpha_{i+1},
    \  i=0,\ldots,K-1,
\)
gives
\(
    \bU-
    \bB^{\rm O}(\bC_\tau^{\rm O})^{-1}
    (\bB^{\rm O})^\top
    \succeq 0
\) and the final result can be proved. \Halmos

\section{Proofs of the lower bound}
\subsection{Proof of \cref{thm:curvature-dependent-quadratic-lower-bound}} \label{appx:proof-curvature-dependent}
\begin{proof}{Proof}
For the scalar quadratic \(f_\lambda(x)=\frac{\lambda}{2}x^2\), we have
\(\grad f_\lambda(x)=\lambda x=L\rho x\). Hence, the iGOGM updates become
\begin{align*}
    y_{k+1}&=(1-\rho)x_k-\frac{1}{L}e_k, \\
    z_{k+1}&=z_k-2\alpha_k\rho x_k-\frac{2\alpha_k}{L}e_k, \\
    x_{k+1}&=(1-\beta_{k+1})y_{k+1}+\beta_{k+1}z_{k+1}.
\end{align*}
Let
\(
    \delta x_k:=x_k-x_k^{\rm ex},
    \text{and } \delta z_k:=z_k-z_k^{\rm ex}.
\)
Subtracting the exact-gradient recursion from the inexact-gradient recursion
gives
\begin{align*}
    \delta y_{k+1}&=(1-\rho)\delta x_k-\frac{1}{L}e_k, \\
    \delta z_{k+1}&=\delta z_k-2\alpha_k\rho\,\delta x_k-\frac{2\alpha_k}{L}e_k, \\
    \delta x_{k+1}&=(1-\beta_{k+1})\delta y_{k+1}+
    \beta_{k+1}\delta z_{k+1}.
\end{align*}
For a fixed error index \(i\), suppose only \(e_i\) is nonzero and write
\[
    \delta x_k=-\frac{1}{L}w_{k,i}(\rho)e_i,
    \qquad \delta z_k=-\frac{1}{L}q_{k,i}(\rho)e_i .
\]
At time \(i+1\), the error \(e_i\) enters the recursion and gives
\[
    w_{i+1,i}(\rho)
    = 1-\beta_{i+1}+2\alpha_i\beta_{i+1},
    \qquad q_{i+1,i}(\rho)
    = 2\alpha_i.
\]
For later times \(k=i+1,\ldots,K-1\), substituting the coefficient
representations into the perturbation recursion gives
\[
    \begin{bmatrix}
        w_{k+1,i}(\rho)\\
        q_{k+1,i}(\rho)
    \end{bmatrix}
    = \mathbf M_k(\rho)
    \begin{bmatrix}
        w_{k,i}(\rho)\\
        q_{k,i}(\rho)
    \end{bmatrix},
\]
with \(\mathbf M_k(\rho)\) as stated. By linearity, the contribution of all
errors is the sum of the individual contributions, and hence
\(
    x_K-x_K^{\rm ex}
    = -\frac{1}{L}
    \sum_{i=0}^{K-1}w_{K,i}(\rho)e_i.
\)
Since the problem is one-dimensional, the adversary can choose the signs of
\(e_i\) to align with the signs of \(w_{K,i}(\rho)\). Therefore,
\(
    \mathcal E_K(\rho)
    = \sum_{i=0}^{K-1}\abs{w_{K,i}(\rho)},
\)
which completes the proof. \Halmos
\end{proof}

\subsection{Proof of \cref{lem:zero-curvature-amplification-iGOGM}} \label{appx:proof-zero-iGOGM}
\begin{proof}{Proof}
When \(\rho=0\),
\[
    \mathbf M_k(0)
    = \begin{bmatrix}
        1-\beta_{k+1} & \beta_{k+1}\\
        0 & 1
    \end{bmatrix}.
\]
Since
\(
    1-\beta_{k+1}
    = 1-\frac{\alpha_{k+1}}{A_{k+1}}
    = \frac{A_k}{A_{k+1}},
\)
we have
\(
    \prod_{j=i+1}^{K-1}(1-\beta_{j+1})
    = \prod_{j=i+1}^{K-1}\frac{A_j}{A_{j+1}}
    = \frac{A_{i+1}}{A_K}.
\)
Equivalently, using the initialization at time \(i+1\) and telescoping the
recursion for \(w_{k,i}(0)\) yields
\(
    w_{K,i}(0)
    = 2\alpha_i
    + \left(w_{i+1,i}(0)-2\alpha_i\right)\frac{A_{i+1}}{A_K}.
\)
Since
\(
    w_{i+1,i}(0)
    = 1-\beta_{i+1}+2\alpha_i\beta_{i+1},
\)
and \(\beta_{i+1}=\alpha_{i+1}/A_{i+1}\), this expression can be written,
after simplification using \(A_{i+1}=A_i+\alpha_{i+1}\), as
\(
    w_{K,i}(0)
    = 2\alpha_i
    + (1-2\alpha_i)\frac{A_i}{A_K}.
\)
Therefore,
\(
    \mathcal E_K(0)
    = \sum_{i=0}^{K-1}\abs{w_{K,i}(0)}
    = \sum_{i=0}^{K-1}
    \left|
        2\alpha_i
        + (1-2\alpha_i)\frac{A_i}{A_K}
    \right|.
\)\Halmos
\end{proof}

\subsection{Proof of \cref{lem:zero-curvature-amplification-iOGMa}} \label{appx:zero-iOGMa}
\begin{proof}{Proof}
By \cref{lem:zero-curvature-amplification-iGOGM},
\[
    w_{K,i}(0)
    = \frac{2(i+a)}{a}
    + \left(1-\frac{2(i+a)}{a}\right)
    \frac{(i+1)(i+2a)}{(K+1)(K+2a)}.
\]
For indices \(i\leq K/2\), the ratio \(A_i/A_K\) is bounded above by a constant
strictly smaller than one. Moreover, \(\alpha_i=(i+a)/a=\Theta(i+1)\). Hence,
for all \(i\leq K/2\) and all sufficiently large \(K\),
\(
    \abs{w_{K,i}(0)}
    \geq c_a'(i+1)
\)
for some constant \(c_a'>0\) depending only on \(a\). Therefore,
\[
    \mathcal E_{K,a}(0)
    = \sum_{i=0}^{K-1}\abs{w_{K,i}(0)}
    \geq \sum_{i=0}^{\lfloor K/2\rfloor}c_a'(i+1)
    \geq c_aK^2.
\]
On the other hand, since \(0\leq A_i/A_K\leq 1\) and \(\alpha_i=O(K)\) for
\(i=0,\ldots,K-1\), there exists a constant \(C_a'>0\) such that
\(
    \abs{w_{K,i}(0)}\leq C_a'K
    \
    \text{for all } i=0,\ldots,K-1.
\)
Thus,
\[
    \mathcal E_{K,a}(0)
    \leq \sum_{i=0}^{K-1}C_a'K
    = C_aK^2.
\]
Combining the two bounds proves the claim. \Halmos
\end{proof}

\subsection{Proof of \cref{lem:small-curvature-amplification-iOGMa}} \label{appx:proof-amplification-iOGMa}
\begin{proof}{Proof}
At zero curvature, we have
\[
w_{K,i}(0)=2\alpha_i+(1-2\alpha_i)A_i/A_K
=2\alpha_i(1-A_i/A_K)+A_i/A_K.
\] 
Consider the middle index set
\(\mathcal I_K:=\{i:\ K/4\le i\le K/2\}\). For every
\(i\in\mathcal I_K\), we have \(\alpha_i=\Theta(K)\). Moreover,
\(A_i/A_K=((i+1)(i+2a))/((K+1)(K+2a))\le \theta_a<1\) for all
sufficiently large \(K\), where \(\theta_a\in(0,1)\) depends only on \(a\).
Hence there exists \(c_a^0>0\), depending only on \(a\), such that
\(|w_{K,i}(0)|=w_{K,i}(0)\ge c_a^0K\) for all \(i\in\mathcal I_K\).

It remains to show that this lower bound is stable under the perturbation
\(\rho=\mu/K^2\). Let \(s_{k,i}(\rho):=(w_{k,i}(\rho),q_{k,i}(\rho))^\top\).
The coefficient recursion can be written as
\(s_{k+1,i}(\rho)=M_k(\rho)s_{k,i}(\rho)\). Moreover,
\[
    M_k(\rho)-M_k(0)
    =
    \begin{bmatrix}
        -\rho\{(1-\beta_{k+1})+2\alpha_k\beta_{k+1}\} & 0\\
        -2\alpha_k\rho & 0
    \end{bmatrix}.
\]
For iOGM-\(a\), uniformly over \(k\le K\), we have
\(\alpha_k=O(K)\) and \(\alpha_k\beta_{k+1}=O(1)\). Thus, for
\(\rho=\mu/K^2\), \(\|M_k(\rho)-M_k(0)\|\le C_a\mu/K\). Also,
\(\|M_k(\rho)\|\le 1+C_a\mu/K\), and the initial coefficient state satisfies
\(\|s_{i+1,i}(\rho)\|\le C_aK\). Therefore, by a discrete Gronwall argument,
\(\|s_{k,i}(\rho)\|\le C_aK\) for all \(i+1\le k\le K\), provided
\(0<\mu\le \mu_a\), after possibly decreasing \(\mu_a\).

Define \(d_{k,i}:=s_{k,i}(\rho)-s_{k,i}(0)\). Since the initial state is
independent of \(\rho\), we have \(d_{i+1,i}=0\). Also,
\(d_{k+1,i}=M_k(0)d_{k,i}+(M_k(\rho)-M_k(0))s_{k,i}(\rho)\). Using the
preceding bounds gives
\[
    \|d_{k+1,i}\|
    \le
    \|d_{k,i}\|
    +
    C_a\frac{\mu}{K}\cdot C_aK
    \le
    \|d_{k,i}\|+C_a\mu .
\]
Iterating over at most \(K\) steps yields \(\|d_{K,i}\|\le C_a\mu K\), and
therefore
\[
    \left|
        w_{K,i}\left(\frac{\mu}{K^2}\right)-w_{K,i}(0)
    \right|
    \le C_a\mu K .
\]
Choose \(\mu_a>0\) sufficiently small so that \(C_a\mu_a\le c_a^0/2\). Then,
for all \(0<\mu\le\mu_a\) and all \(i\in\mathcal I_K\),
\[
    \left|
        w_{K,i}\left(\frac{\mu}{K^2}\right)
    \right|
    \ge
    |w_{K,i}(0)|
    -
    \left|
        w_{K,i}\left(\frac{\mu}{K^2}\right)-w_{K,i}(0)
    \right|
    \ge
    \frac{c_a^0}{2}K .
\]
Since \(|\mathcal I_K|=\Theta(K)\), it follows that
\[
    \mathcal E_{K,a}\left(\frac{\mu}{K^2}\right)
    \ge
    \sum_{i\in\mathcal I_K}
    \left|
        w_{K,i}\left(\frac{\mu}{K^2}\right)
    \right|
    \ge
    c_aK^2 .
\]
This completes the proof. \Halmos
\end{proof}

\section{Numerical solutions for optimized algorithm with inexact oracle}{\label{sec:numerical-optimized-algorithm}}
	We provide the optimized stepsize for different \(K\) with \(L = 1\) and \(R=1\). We report three results, the first one is for \(\bb = \bo\), the second one is for \(b_k^2 \equiv \bar{b}^2 = 0.01\) and the third one takes \(\seq{b_k}\) to be a decreasing sequence. Note that \(\bar{b}^2\) in the second scenario is set equal to the average value of \(b_k^2\) in the third scenario.

\begin{table}[hbt]
    \centering
    \caption{Numerical solution of optimized first-order algorithm with \(b_i \equiv 0\)}
    \label{tab:theta-b0}
    \begin{tabular}{@{}l@{\qquad}l@{}}
    \hline
    \noalign{\vskip 3pt}
        \(K\) & \multicolumn{1}{c}{\(\theta^{b=0}\)}        \\
        \hline
         1 & \(\begin{pmatrix}
			               1.6180
		               \end{pmatrix}\)\\
        2 & \(\begin{pmatrix}
			               1.6180 & 0 \\ 1.7921 & 2.0193
		               \end{pmatrix}\)\\
        3 & \(\begin{pmatrix}
			                  1.6180 & 0 & 0 \\ 1.7921 & 2.0193 & 0\\ 1.8677 & 2.4618 & 2.2316
		                  \end{pmatrix} \)\\
        4 & \(\begin{pmatrix}
			                  1.6180 & 0 & 0 & 0 \\ 1.7921 & 2.0193 & 0 & 0\\ 1.8677 & 2.4617 & 2.2316 & 0\\ 1.9078 & 2.6966 & 2.8856 & 2.3654
		                  \end{pmatrix}  \)\\
                    5 & \(\begin{pmatrix}
			               1.6180 & 0 & 0 & 0 & 0 \\ 1.7921 & 2.0193 & 0 & 0 & 0\\ 1.8676 & 2.4617 & 2.2315 & 0 & 0\\ 1.9078 & 2.6966 & 2.8855 & 2.3653 & 0\\ 1.9318 & 2.8373 & 3.2771 & 3.1828 & 2.4580
		               \end{pmatrix}\) \\ \hline
    \end{tabular}
\end{table}

\begin{table}[hbt]
    \centering
    \renewcommand{\arraystretch}{1.2}
    \caption{Numerical solution of optimized first-order algorithm with \(b_i\equiv \bar{b}\)}
    \label{tab:theta-barb}
    \begin{tabular}{@{}l@{\qquad}l@{}}
    \hline
    \noalign{\vskip 3pt}
        \(K\) & \multicolumn{1}{c}{\(\theta^{b=\bar{b}}\)}        \\
        \hline
         1 & \(\begin{pmatrix}
			                     1.5509
		                     \end{pmatrix}\)\\
        2 & \( \begin{pmatrix}
			                                                        1.5537 & 0 \\ 1.7028 & 1.7642
		                                                        \end{pmatrix}\)\\
        3 & \(\begin{pmatrix}
			          1.5552 & 0 & 0 \\ 1.7075 & 1.7828 & 0\\ 1.7668 & 2.0875 & 1.7590 \end{pmatrix} \)\\
        4 & \(\begin{pmatrix}
			                        1.5561 & 0 & 0 & 0 \\ 1.7101 & 1.7926 & 0 & 0\\ 1.7719 & 2.1112 & 1.7962 & 0\\ 1.7984 & 2.2476 & 2.1371 & 1.6578
		                        \end{pmatrix}   \)\\
                    5 & \(\begin{pmatrix}
			1.3714 & 0 & 0 & 0 & 0 \\ 1.4958 & 1.4457 & 0 & 0 & 0\\ 1.5444 & 1.6859 & 1.3960 & 0 & 0\\ 1.5675 & 1.8115 & 1.6837 & 1.3303 & 0\\ 1.5644 & 1.8444 & 1.7931 & 1.5876 & 1.2111
			\end{pmatrix}\) \\ \hline
    \end{tabular}
\end{table}

\begin{table}[hbt]
    \centering
    \renewcommand{\arraystretch}{1.2}
    \caption{Numerical solution of the optimized first-order algorithm with \(b_i\) decreasing}
    \label{tab:theta-bdecrease}
    \begin{tabular}{@{}l@{\qquad}l@{}}
    \hline
    \noalign{\vskip 3pt}
        \(K\) & \multicolumn{1}{c}{\(\theta^{b_k\downarrow} \)}        \\
        \hline
         1 & \( \begin{pmatrix}
			                         1.5509
		                         \end{pmatrix}\)\\
        2 & \( \begin{pmatrix}
			                                                                                                     1.5478 & 0 \\ 1.7047 & 1.7954
		                                                                                                     \end{pmatrix}\)\\
        3 & \(\begin{pmatrix}
			                        1.5495 & 0 & 0 \\ 1.7043 & 1.7875 & 0\\ 1.7690 & 2.1161 & 1.8192
		                        \end{pmatrix} \)\\
        4 & \(\begin{pmatrix}
			                       0.0335 & 0 & 0 & 0 \\ 0.0276 & 1.5575 & 0 & 0\\ 0.0259 & 1.7218 & 1.8158 & 0\\ 0.0252 & 1.7925 & 2.1663 & 1.8749
		                        \end{pmatrix}  \)\\
                    5 & \(\begin{pmatrix}
			                         0.0111 & 0 & 0 & 0 & 0 \\ 0.0014 & 0.5086 & 0 & 0 & 0\\ 0.0005 & 0.4713 & 1.6643 & 0 & 0\\ 0.0002 & 0.4588 & 1.8883 & 1.8380 & 0\\ 0.0000 & 0.4531 & 1.9905 & 2.2202 & 1.8669
		                         \end{pmatrix}\) \\ \hline
    \end{tabular}
\end{table}

	We can observe that the stepsize at each iteration with the exact gradient oracle remains the same, while this is not the case with the inexact oracle. Even for the fixed inexactness level (second scenario), the optimized algorithm changes with the total iteration number. Another fact is that the optimized algorithm can identify a bad gradient estimate (see the first iteration of \(\theta^{b_k\downarrow}\)). It almost discards the first iteration since its inexactness is beyond some unknown threshold. Apart from the stepsize result, the optimized objective function value indicates that the initial condition \(R\) plays a role since \(\tau\) for the three scenarios has different values. This is different from the exact case \cite{Drori2014PerformanceFirstOrder, Kim2016OptimizedFirstOrder} as optimized stepsize is independent of \(R\) and \(L\).

    \begin{table}[hbt]
    \centering
    \renewcommand{\arraystretch}{1.2}
    \caption{Objective values for the optimized algorithm}
    \begin{tabular}{@{}c@{\qquad}c@{\quad}c@{\quad}c@{\qquad}c@{\quad}c@{\quad}}
    \hline
    \noalign{\vskip 3pt}
			\(K\) & \(\tau^{b=0}\) & \(\tau^{b=\bar{b}}\) & \(\tau^{b \downarrow}\) & \(\bar{b}^2\sum u^{b=\bar{b}}_k\) & \(\sum u^{b_k\downarrow}_k b^2_k\) \\
			\hline
			1   & 0.0955       & 0.1123             & 0.0185                & 0.0185                        & 0.0185                           \\
			2   & 0.0520       & 0.0743             & 0.0271                & 0.0273                        & 0.0271                           \\
			3   & 0.0331       & 0.0583             & 0.0340                & 0.0339                        & 0.0340                           \\
			4   & 0.0230       & 0.0504             & 0.0310                & 0.0396                        & 0.0310                           \\
				5   & 0.0170       & 0.0500             & 0.0320                & 0.0400                        & 0.0320 \\ \hline
		\end{tabular}
	\end{table}
\end{appendices}

\end{document}